\documentclass[12pt]{amsart}
\usepackage{amsfonts}
\usepackage{amssymb, amstext, amscd, amsmath}
\usepackage{graphicx}
\usepackage{times}
\usepackage{amsthm}
\makeatother
\usepackage{txfonts}
\usepackage{indentfirst}
\usepackage[all]{xy}
\usepackage{tikz}
\usepackage{subcaption}
\usepackage{pifont}
\usepackage{bbm}
\usepackage{mathrsfs}
\usepackage{lscape}
\usepackage{setspace}
\usepackage{color}
\usepackage{ulem}
\usepackage{color}
\usepackage{url}
\usepackage{float}
\everymath{\displaystyle}

\usepackage{geometry}
\renewcommand{\baselinestretch}{1.2} 

\DeclareMathOperator{\sgn}{sgn}
\DeclareMathOperator{\Ind}{Ind}

\usepackage[utf8]{inputenc}
\newtheoremstyle{noparens}%
{}{}%
{\itshape}{}%
{\bfseries}{.}%
{ }%
{\thmname{#1}\thmnumber{ #2}\mdseries\thmnote{ #3}}
\theoremstyle{noparens}
\allowdisplaybreaks
\usepackage{xcolor}   
\usepackage{color}              
\usepackage{indentfirst} 
\usepackage{url}
\usepackage[colorlinks = true,
linkcolor=blue,
urlcolor=blue,
citecolor=blue,
anchorcolor=blue,
backref=page]{hyperref}

\def\red#1{\textcolor{red}{#1}}
\usepackage{color,xcolor} 
\def\blue#1{\textcolor{blue}{#1}}
\usepackage{color,xcolor} 
\def\cyan#1{\textcolor{cyan}{#1}}
\def\purple#1{\textcolor{purple}{#1}}

\newtheorem{theorem}{Theorem}[section]
\newtheorem{corollary}{Corollary}[section]
\newtheorem{proposition}{Proposition}[section]

\newtheorem{remark}{Remark}[section]
\newtheorem{definition}{Definition}[section]
\newtheorem{example}{Example}[section]
\numberwithin{equation}{section}
\numberwithin{equation}{section}

\newtheorem{question}{Question}[section]
\newtheorem{construction}{Construction}[section]

\date{\today}

\begin{document}
	
	\title[The r-coverings and local moves of planar virtual knotoids]{The r-coverings and local moves of planar virtual knotoids}	
	
	\author[Jiacheng An]{Jiacheng An}
	\address{School of Mathematical Sciences, Dalian University of Technology, Dalian 116024, P. R. China}
	\email{anan0312@163.com}
	
	\author[Fengling Li]{Fengling Li}
	\address{School of Mathematical Sciences, Dalian University of Technology, Dalian 116024, P. R. China}
	\email{fenglingli@dlut.edu.cn}
	
	\author[Andrei Vesnin]{Andrei Vesnin}
	\address{Sobolev Institute of Mathematics of the Siberian Branch of the Russian Academy of Sciences, Novosibirsk 630090, Russia}
	\email{vesnin@math.nsc.ru}
	
	\keywords{Planar virtual knotoid; crossing change; $\Delta$-move; region crossing change; $r$-covering; Gordian distance}
	
	\thanks{F.\,Li supported in part by a grant of NSFC (No. 12331003) and the Fundamental Research Funds for the Central Universities (No. DUT25LAB302); A.\,V. supported by the state task to the Sobolev Institute of Mathematics (No. FWNF-2026-0031).}
	
	\begin{abstract}
		In this paper, we study planar virtual knotoids, which generalize virtual knots and knotoids both. We consider the problem of whether two given planar virtual knotoids can be transformed into each other via a sequence of local moves and what is the shortest sequence length. By introducing $r$-covering of planar virtual knotoids, we determine the homotopy relationship between different planar virtual knotoids, and obtain lower bounds of the Gordian distance between them. On the basis of these results, we demonstrate calculations of the exact Gordian distance of several given pairs of homotopic planar virtual knotoids.  Furthermore, we investigate $\Delta$-moves and virtual region crossing changes for planar virtual knotoids, and derive lower bounds of the Gordian distance with respect to both moves via $r$-coverings.
	\end{abstract}
	
	\maketitle

	\section{Introduction} \label{sec1}
	
	Crossing change is one of the elementary local moves in classical knot theory. The idea of quantifying the difficulty of untying a knot via crossing changes can be traced back to Tait~\cite{Ta77}, who coined the term beknottedness in~1877 to count the minimal crossing flips needed to simplify a knot diagram, albeit his measure was restricted to a single planar drawing and lacked topological invariance without a complete theory of knot isotopy. Decades later, a rigorous theoretical framework was built by Wendt~\cite{We37} in 1937. He formally proved that every classical knot can be converted into the trivial knot through finitely many crossing changes, and standardized the modern definition of the unknotting number as the minimum number of crossing changes over all isotopic diagrams of a knot. Additionally, Wendt derived explicit lower bounds for the unknotting number by analyzing the first homology groups of cyclic branched coverings of $S^3$ branched along the knot. In~\cite{Mu85}, Murakami further considered the minimal number of crossing changes required to transform a diagram of a knot $K$ into that of another knot $K'$, where the minimum is taken over all diagrams of $K$ from which can obtain diagrams of $K'$. This led him to define the Gordian distance $d_G(K, K')$ between two knots $K$ and $K'$, and to investigate its connection with the double coverings of $S^3$ branched along knots. The broader topological foundations of crossing change, encompassing not only the unknotting number and Gordian distance but also connections to Dehn surgery, sutured manifold theory, and $4$-manifold topology, are systematically surveyed by Scharlemann in~\cite{Sc98}. This expository article organizes decades of scattered results into a unified geometric framework and remains the standard reference for the classical theory of crossing changes.
	
	There also exist numerous local moves that can untie arbitrary knot. Among these moves, the $\Delta$-move was introduced by Murakami and Nakanishi~\cite{MN89}. Based on the $\Delta$-move, they also defined the $\Delta$-unknotting number and $\Delta$-Gordian distance, and established their relations to classical invariants. In addition, Hoste, Nakanishi, and Taniyama~\cite{HNT90} defined the $H(n)$-move and proved that an $H(n)$-move can be realized by an $H(n+1)$-move, see~\cite[Lemma 2]{HNT90}. Besides, using the $\sharp$-move, Murakami~\cite{Mu85} defined the $\sharp$-Gordian distance and revealed several key connections with the signature and the Arf invariant. Both the $H(n)$-move and $\sharp$-move change all crossings within a single region of a diagram. Shimizu~\cite{Sh14} defined the region crossing change and region unknotting number, and established a lower bound for the latter. Gordian complexes of knots by region crossing changes were studied in~\cite{GPV}. This operation was later generalized to link diagrams on closed surfaces of arbitrary genus by Cheng et al.~\cite{CCXZ22} for orientable surfaces and by Cheng et al.~\cite{CLSZ26} for nonorientable surfaces, who gave a unified rank formula for the incidence matrix counting equivalence classes under region crossing changes. These operations are collectively referred to as unknotting operations for knots.
	
	As classical knots were generalized to virtual knots by Kauffman~\cite{Ka21}, the above moves have also been applied to  the virtual knots, but they do not necessarily serve as unknotting operations for virtual knots. Kishino's knot provides an explicit counterexample. Indeed, according to ~\cite{DK09} and ~\cite{Mi08}, crossing changes preserve the non-classical topological data detected by their polynomial invariants, making it impossible to reduce Kishino's knot to the trivial knot using crossing changes alone.  Satoh and Taniguchi~\cite{ST14} proved that even though both the $\Delta$-move and the $\sharp$-move are presented by crossing changes, neither move is an unknotting operation for virtual knots. For these reasons, there exist studies on local moves of virtual knots along two distinct directions. The first is to consider virtual knots modulo a forbidden move, that leads to the class of welded knots. Satoh~\cite{Sa18} proved that crossing changes, $\Delta$-moves, and $\sharp$-moves are all unknotting operations for welded knots. The second direction is to consider virtual knots up to homotopy.  Jeong~\cite{Je23} gave a lower bound on the Gordian distance between two homotopic virtual knots using a two-variable polynomial invariant.
	
	The theory of knotoids was introduced by Turaev~\cite{Tu12} in $2012$. A knotoid diagram $D$ is a generic immersion of an oriented unit interval $[0,1]$ into an oriented surface $\Sigma$, with over/under information assigned to all double points. A knotoid $K$ is the equivalence class $[D]$ of such diagrams under Reidemeister moves and isotopies on $\Sigma$. Knotoids on $S^2$ are called spherical knotoids, while those on $\mathbb{R}^2$ are planar knotoids. The basic concepts of knotoids have been thoroughly studied in~\cite{Tu12}, including the introduction of several knotoid invariants and the monoid of knotoids associated with $\theta$-curves. In $2017$, Gügümcü and Kauffman~\cite{GK17} generalized knotoids to virtual knotoids and constructed several new invariants of the latter. A~comprehensive overview of knotoids, braidoids and their applications was given by Gügümcü, Kauffman and Lambropoulou~\cite{GKL19}. In~\cite{BG21}, Barbensi and Goundaroulis defined the $f$-distance between knotoids as the minimal number of forbidden moves. Adams et al.~\cite{ABCCJRSSW24} introduced the notion of hyperbolicity for both spherical and planar knotoids, and discuss its properties.
	
	As established in~\cite{Tu12}, the natural embedding of $\mathbb R^2$ into $S^2$ induces a surjective map from planar knotoids to spherical knotoids. This mapping is not injective, it follows that the theories of planar and spherical knotoids are essentially distinct. Kodokostas and Lambropoulou~\cite{KL19} investigated a connection between planar knotoids and arcs in $\mathbb R^3$ up to rail isotopy, initially introduced in~\cite{GK17}. Goundaroulis et al.~\cite{GGLDSK17} demonstrated that planar knotoids yield more detailed insights into knotted proteins compared with alternative approaches. A systematic classification of planar knotoids with up to five crossings was carried out by Goundaroulis, Dorier and Stasiak in~\cite{GDS19}. In particular, they proved that the number of prime planar knotoids with five crossings lies between $944$ and $950$, leaving six pairs of diagrams unresolved. Later, Molmaker and van der Veen~\cite{MV23} proved that among the six unresolved pairs of planar knotoids, two can be distinguished using quantum invariants.
	
	In this paper, we focus on planar virtual knotoids. The main questions we consider are whether, for two given planar virtual knotoids, there exists a finite sequence of local moves taking one to the other, and if so, what is the minimal number of such moves required. Two planar virtual knotoids $K$ and $K'$ are said to be \textit{homotopic} if their diagrams can be transformed into each other via a sequence of crossing changes and generalized Reidemeister moves, see Definition~\ref{def2.6}. Denote by $\mathbb Z_+$ the set of positive integers, and let $\mathbb N=\mathbb Z_+\cup\{0\}$. In~\cite{Tu04}, Turaev defined $r$-covering, $r \in \mathbb Z_+$, as an invariant for flat virtual knots. This covering concept has since been extended to virtual knots, see~\cite{NNS20} for details. In $2025$, Higa~\cite{Hi25} used the $r$-coverings of virtual knots to obtain lower bounds of the Gordian distances between homotopic virtual knots. For $r \in \mathbb Z_+$, we generalize the notion of $r$-covering from virtual knots to planar virtual knotoids. For a diagram $D$ of a planar virtual knotoid, its $r$-covering, denoted by $D^{(r)}$, is defined in Definition~\ref{D^{(r)}}. 
	
	We thus obtain the following results.
	
	\begin{theorem}\label{thm-invc}
		Let $D$ be a diagram for a planar virtual knotoid $K$, and $r\in\mathbb Z_+$. $D^{(r)}$ is the $r$-covering of $D$, and $[D^{(r)}]$ denotes the equivalence class of $D^{(r)}$ under generalized Reidemeister moves and planar isotopies. Then $[D^{(r)}]$ is an invariant of $K$.
	\end{theorem}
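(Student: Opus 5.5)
We need to guess the definition of the $r$-covering. Following Turaev and Higa, each classical crossing $c$ of $D$ gets an index $\mathrm{ind}(c)\in\mathbb Z$, computed from the Gauss diagram: the signed count of chords crossing the chord of $c$. For a knotoid, where the underlying curve is an interval, one uses the analogous count of crossings met when traversing the loop at $c$ (the arc from $c$ back to $c$), signed by whether they are passed as over- or under-crossings. Then $D^{(r)}$ is obtained from $D$ by turning every classical crossing whose index is not divisible by $r$ into a virtual crossing, while keeping the crossings with $r\mid \mathrm{ind}(c)$ classical.

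The plan is to check that every generalized Reidemeister move $D\to D'$ induces a sequence of generalized Reidemeister moves (or the identity, or a planar isotopy) $D^{(r)}\to D'^{(r)}$. Planar isotopy is immediate. So is invariance under moves avoiding the endpoints, because the diagrams agree outside a disk. The first step is to show that indices of crossings outside the disk of a move are unchanged.
\begin{itemize}
\item For RI, RII and the purely virtual and mixed moves, the crossings created or destroyed contribute cancelling signed counts, or none, to any loop.
\item For RIII, the three crossings merely permute along the strands, so each loop keeps the same multiset of contributions.
\end{itemize}

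The second step is to compute the indices of the crossings inside the disk.
\begin{itemize}
\item An RI crossing has index $0$, so it stays classical and the RI move persists in $D^{(r)}$.
\item The two RII crossings have equal indices, so both are classical (giving RII) or both are virtual (giving a virtual RII).
\item For RIII, the indices $a,b,c$ of the three crossings satisfy a linear relation of the form $\mathrm{ind}(c_1)-\mathrm{ind}(c_2)+\mathrm{ind}(c_3)=0$, with the signs depending on the move type, and each index is preserved by the move. Mod $r$, either all three are $0$ (classical RIII), or exactly one is $0$, or none is. If exactly one is $0$, we get a classical crossing against two virtual ones, which is the mixed move. If none is $0$, we get a purely virtual RIII. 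Two zeros forces the third to be zero as well, so no forbidden configuration arises.
\item Virtual and mixed moves map to themselves.
\end{itemize}

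The knotoid-specific issue is the forbidden moves near endpoints: these are excluded, so they need not be checked. What must be checked is that the index of a loop is well defined even though the curve is not closed, i.e.\ that the count uses only the loop between the two passes of $c$. This count never involves the endpoints, so it is insensitive to isotopies moving a strand across nothing.

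I expect the main obstacle to be the RIII index relation with correct signs for all oriented RIII variants. I would reduce this to a single Gauss-diagram computation by using the chords' local pattern, and use the standard fact that a generating set of oriented moves suffices, so only a few cases need checking.
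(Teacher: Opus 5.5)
Your proposal is correct and follows the paper's proof essentially step for step. Both arguments use the same ingredients: crossings away from the move keep their indices; the $\Omega_1$ crossing has index $0$; the two $\Omega_2$ crossings have equal indices; the three $\Omega_3$ indices satisfy a linear relation ($i=j+k$ in the paper), so exactly one non-divisible index never occurs and only the classical $\Omega_3$, $\Omega_3^m$ and $\Omega_3^v$ patterns remain; and the virtual moves leave the Gauss diagram untouched. Note that the paper's move set also contains $\Omega_v$, an endpoint passing a virtual crossing, which is allowed rather than forbidden but is handled the same way. One caution: in your loop description each crossing must be weighted by its sign $\operatorname{sgn}(c')$ as well as by over/under, matching $\Ind(c)=|r^+(c)|-|r^-(c)|-|\ell^+(c)|+|\ell^-(c)|$; with the over/under sign alone, a loop containing just one $\Omega_2$ strand would pick up $\pm 2$ instead of $0$.
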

	
	We denote $[D^{(r)}]$ by $K^{(r)}$. Thus, $K^{(r)}$ is an invariant of $K$.
	
	Next we consider the Gordian distance between two homotopic planar virtual knotoids $K$ and $K'$. The \textit{Gordian distance} $d_G(K,K')$ between their diagrams, see Definition~\ref{def2.7}. 
	
	\begin{theorem}\label{thm-ccov}
		Let $K$ and $K'$ be planar virtual knotoids, and $r \in \mathbb Z_+$. If $K$ and $K'$ are homotopic, then $K^{(r)}$ and $K'^{(r)}$ are homotopic. Moreover, $d_G(K, K') \ge d_G(K^{(r)}, K'^{(r)})$.
	\end{theorem}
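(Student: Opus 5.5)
The argument reduces to a local statement about one crossing change: if $D'$ is obtained from $D$ by a single crossing change, then $D'^{(r)}$ is obtained from $D^{(r)}$ by at most one crossing change, up to generalized Reidemeister moves and planar isotopy. Both claims of the theorem follow from this together with Theorem~\ref{thm-invc}. If $K$ and $K'$ are homotopic, take a sequence of diagrams $D=D_0,D_1,\dots,D_n=D'$ in which each step is either a generalized Reidemeister move or a single crossing change. By Theorem~\ref{thm-invc}, Reidemeister steps leave the class $[D_i^{(r)}]$ unchanged. By the local statement, each crossing change contributes at most one crossing change on the covering side. Hence $K^{(r)}$ and $K'^{(r)}$ are homotopic. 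For the inequality, choose a sequence realizing $d_G(K,K')=n$ crossing changes; it yields a sequence from $K^{(r)}$ to $K'^{(r)}$ with at most $n$ crossing changes, so $d_G(K^{(r)},K'^{(r)})\le n$.

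For the local statement I will rely on how $D^{(r)}$ is built in Definition~\ref{D^{(r)}}, which I expect to follow Turaev's and Higa's construction. Each classical crossing $c$ has an index $\mathrm{ind}(c)\in\mathbb Z$, computed from the signs of crossings met along the loop (or, for a knotoid, the arc-type path) determined by smoothing at $c$. The covering $D^{(r)}$ keeps as classical the crossings with $\mathrm{ind}(c)\equiv 0 \pmod r$ and makes all others virtual, leaving the underlying immersed arc unchanged.

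The key check is how a crossing change at $c_0$ affects indices.
\begin{itemize}
\item Index of $c_0$ itself: changing $c_0$ flips its sign and also swaps which of the two smoothing paths counts as the "left/right" one. The standard computation should give $\mathrm{ind}(c_0)\mapsto -\mathrm{ind}(c_0)$, and this preserves the condition $\mathrm{ind}\equiv 0 \pmod r$.
\item Index of any other crossing $c$: I expect $\mathrm{ind}(c)$ to be unchanged, since in the definition the contribution of $c_0$ to $\mathrm{ind}(c)$ depends only on which strand of $c_0$ lies on the path and how it is oriented, not on over/under data.
\end{itemize}
Granting these, the classical/virtual pattern of $D'^{(r)}$ coincides with that of $D^{(r)}$. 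If $c_0$ is classical in $D^{(r)}$, then $D'^{(r)}$ differs from $D^{(r)}$ exactly by the crossing change at $c_0$. If $c_0$ is virtualized, then $D'^{(r)}=D^{(r)}$.

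The main obstacle is the index computation, because the definition of index for knotoids must handle the endpoints. The smoothing at $c$ produces one closed loop and one arc carrying the endpoints, and the index may be computed with respect to a particular one of these components. I would verify by cases, according to which component contains each strand of $c_0$, that the contribution of $c_0$ to $\mathrm{ind}(c)$ is sign-independent. I would then confirm $\mathrm{ind}(c_0)\mapsto -\mathrm{ind}(c_0)$ directly. This reduces to the formula for the index as a signed count of intersections of the relevant component with the rest of the diagram, as in the virtual knot case.
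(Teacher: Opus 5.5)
Your proposal is correct and follows essentially the same route as the paper: a crossing change at $c_0$ negates $\Ind(c_0)$ and leaves every other index unchanged, so consecutive $r$-coverings either coincide or differ by one crossing change. You also make explicit the use of Theorem~\ref{thm-invc} for the intermediate generalized Reidemeister moves, which the paper leaves implicit. With the paper's Gauss-diagram definition of the index, the check you flag as the main obstacle is immediate and the endpoints play no role. Reversing the chord $c_0$ swaps $r(c_0)$ and $\ell(c_0)$, which negates $\Ind(c_0)$. For any other chord $c$, the reversal moves $c_0$ between $r^{\pm}(c)$ and $\ell^{\mp}(c)$, and these two sets contribute equally to $\Ind(c)$.
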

	
	In analogy with homotopy and Gordian distance associated with crossing changes, we introduce the $\Delta$-\textit{homotopy} and the $\Delta$-\textit{Gordian distance} $d_\Delta(K,K')$ associated with $\Delta$-moves, see Definition~\ref{def2.8} and Definition~\ref{def2.9}. 
	
	\begin{theorem}\label{thm-decov}
		Let $K$ and $K'$ be planar virtual knotoids, and $r \in \mathbb Z_+$. If $K$ and $K'$ are $\Delta$-homotopic, then $K^{(r)}$ and $K'^{(r)}$ are $\Delta$-homotopic. Moreover, $d_\Delta(K, K') \ge d_\Delta(K^{(r)}, K'^{(r)})$.
	\end{theorem}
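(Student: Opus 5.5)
The plan is to reduce Theorem~\ref{thm-decov} to a purely local statement: a single $\Delta$-move on a diagram $D$ induces, on the $r$-covering $D^{(r)}$, either no change (up to generalized Reidemeister moves and planar isotopy) or exactly one $\Delta$-move. Once this is established, the rest follows formally. If $K$ and $K'$ are $\Delta$-homotopic, choose diagrams $D$ of $K$ and $D'$ of $K'$ connected by a sequence of generalized Reidemeister moves, planar isotopies and $n=d_\Delta(K,K')$ $\Delta$-moves (Definitions~\ref{def2.8} and~\ref{def2.9}). By Theorem~\ref{thm-invc}, the Reidemeister-type steps do not change $[D^{(r)}]$. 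Each $\Delta$-move contributes at most one $\Delta$-move on the coverings. Concatenating these steps gives a sequence from $D^{(r)}$ to $D'^{(r)}$ with at most $n$ $\Delta$-moves. This shows that $K^{(r)}$ and $K'^{(r)}$ are $\Delta$-homotopic and that $d_\Delta(K^{(r)},K'^{(r)})\le n$.

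For the local lemma, I would recall Definition~\ref{D^{(r)}}. Presumably $D^{(r)}$ is obtained from $D$ by assigning to each classical crossing $c$ an index $\mathrm{ind}(c)$ from the Gauss diagram, namely a signed count of chord endpoints on one of the arcs cut out by $c$, adapted to the knotoid setting via the arc between the tail and the head. One then keeps as classical exactly those crossings with $\mathrm{ind}(c)\equiv 0 \pmod r$ and makes the others virtual. This is the same mechanism that proves the crossing-change case, Theorem~\ref{thm-ccov}. There the key facts are that a crossing change preserves the indices of all other crossings, and that it either preserves the index of the changed crossing or sends $\mathrm{ind}(c)$ to $-\mathrm{ind}(c)$, which leaves the condition ``$\equiv 0 \pmod r$'' unchanged. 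For the $\Delta$-move I would proceed as follows:
\begin{itemize}
\item Write down the three crossings $a,b,c$ involved in the move.
\item Compute how their indices change: in the Gauss diagram, the chords are permuted cyclically in the three small arcs, and the signs flip in the standard way.
\item Verify that the indices of all crossings outside the disk are unchanged. This holds because the endpoints of the three chords stay in the same arcs of the underlying interval, so the algebraic counts for any chord not among $a,b,c$ are preserved.
\end{itemize}

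The main obstacle is the case analysis for $a,b,c$ themselves. I expect the indices to satisfy a relation: with suitable signs, the index of one crossing is the sum of the other two, as in the virtual-knot computation of Higa~\cite{Hi25} and Satoh--Taniguchi~\cite{ST14}. I also expect the $\Delta$-move to preserve each index mod $r$. Two cases then arise:
\begin{itemize}
\item If all three indices are $\equiv 0 \pmod r$, the move in $D$ becomes a $\Delta$-move in $D^{(r)}$.
\item If at most one of them is $\equiv 0 \pmod r$, the local picture in $D^{(r)}$ has at least two virtual crossings among the three. That picture is then related to its $\Delta$-moved image by the detour move and virtual Reidemeister moves (including mixed moves), so it counts as zero $\Delta$-moves.
\end{itemize}
The sum relation rules out the case of exactly two indices $\equiv 0$, since then the third would also be $\equiv 0$. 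I need to check this carefully, including every orientation type of the $\Delta$-move. I also need to check the knotoid-specific subtlety that the head or tail cannot lie inside the disk of the move, so that the indices are well defined and the arcs used in the count are unaffected.
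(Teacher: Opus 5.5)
Your proposal follows the same route as the paper's proof, and it is correct. The shared steps are: chords outside the disk keep their indices, and the three crossings of the move keep theirs. A signed linear relation among those three indices rules out exactly two being divisible by $r$. The remaining cases give a $\Delta$-move, an $\Omega_3^m$-move or an $\Omega_3^v$-move on the coverings, and Theorem~\ref{thm-invc} handles the intervening Reidemeister steps.

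One correction to your tentative Gauss-diagram description: a $\Delta$-move changes neither the signs nor the directions of the three chords. It only reverses the order of the two endpoints on each of the three small arcs. Consequently all indices are preserved exactly, and the relation the paper uses is $\Ind(c_1)+\Ind(c_2)+\Ind(c_3)=0$.
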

	
	Kadokami~\cite{Ka12} introduced the diagrammatic region crossing change of virtual links. We generalize this move to planar virtual knotoids and define the \textit{virtual region crossing change}. Since a crossing change is not an unknotting operation for planar virtual knotoids, the virtual region crossing change, shortly, VRCC-move, is likewise not an unknotting operation. By analogy with the homotopy and Gordian distance defined via crossing changes, we define VRCC-homotopy together with the VRCC-Gordian distance $d_{\text{VRCC}}(K,K')$ induced by VRCC-moves, see Definition~\ref{def2.11} and Definition~\ref{def2.12}.
	
	\begin{theorem}\label{thm-vrcov}
		Let $K$ and $K'$ be planar virtual knotoids, and $r \in \mathbb Z_+$. If $K$ and $K'$ are VRCC-homotopic, then $K^{(r)}$ and $K'^{(r)}$ are VRCC-homotopic. Moreover, $d_\text{VRCC}(K, K') \ge d_\text{VRCC}(K^{(r)}, K'^{(r)})$.
	\end{theorem}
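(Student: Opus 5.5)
The plan is to reduce the statement to a local analysis of a single VRCC-move, in the same way as for Theorems~\ref{thm-ccov} and~\ref{thm-decov}. By Theorem~\ref{thm-invc}, $K^{(r)}$ depends only on $K$, so we may work with diagrams. Suppose $D$ and $D'$ are diagrams of $K$ and $K'$ that are related by a sequence of $n$ VRCC-moves and generalized Reidemeister moves. It then suffices to prove one claim: if $D'$ is obtained from $D$ by a single VRCC-move, then $D'^{(r)}$ is obtained from $D^{(r)}$ by at most one VRCC-move, up to generalized Reidemeister moves. Applying this claim step by step along the sequence gives both conclusions. The coverings are then VRCC-homotopic, and choosing a minimal sequence yields $d_{\text{VRCC}}(K^{(r)},K'^{(r)})\le n = d_{\text{VRCC}}(K,K')$.

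To prove the claim, we recall how $D^{(r)}$ is built in Definition~\ref{D^{(r)}}. Each classical crossing $c$ carries an index $\mathrm{ind}(c)$. The crossings with $\mathrm{ind}(c)\equiv 0 \pmod r$ are kept as classical crossings, and all others are turned into virtual crossings. The first step is to check that a VRCC-move does not change the index of any crossing. A VRCC-move at a region changes the crossings on its boundary, and this is presumably realized as a collection of crossing changes in the Gauss diagram. A crossing change reverses the chord and its sign, and the index is defined so that this operation preserves it. This is the same fact used in Theorem~\ref{thm-ccov}, and it implies that the set $S$ of crossings of $D$ that survive in $D^{(r)}$ is the same as the set $S'$ for $D'$.

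Let $R$ be the region of $D$ where the move is applied, and let $C(R)$ be the set of classical crossings changed by the move. Then $D'^{(r)}$ is obtained from $D^{(r)}$ by changing exactly the crossings in $C(R)\cap S$. The remaining task is to show that this set is exactly the set of crossings changed by a single VRCC-move on $D^{(r)}$. The planar picture of $D^{(r)}$ differs from that of $D$, but only because some classical crossings of $D$ are redrawn as virtual crossings. We will view $D^{(r)}$ as drawn on the same underlying immersed curve with some crossings virtualized. The virtual region crossing change should be defined in terms of regions where virtual crossings do not separate regions in the relevant sense, following Kadokami. Under that definition, $R$ lies inside a single (virtual) region $R^{(r)}$ of $D^{(r)}$, and the VRCC-move at $R^{(r)}$ changes precisely the classical crossings of $D^{(r)}$ lying on $\partial R$, namely $C(R)\cap S$.

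\textbf{Main obstacle.} The hardest step is this last identification, together with the subtlety at the endpoints of the knotoid. For planar knotoids, regions adjacent to the tail or head behave differently. Moreover, after virtualization the region $R$ may merge with other regions, and the VRCC-move on the merged region could change more crossings than $C(R)\cap S$. I would first try to use the paper's exact definition of VRCC, which is presumably Gauss-diagrammatic and specified by a set of chords determined by a region, so that it commutes with the operation of deleting chords. If it does not, I would instead express the change of $C(R)\cap S$ as a VRCC-move on a diagram of $D^{(r)}$ that has been modified by detour moves. Detour moves are allowed generalized Reidemeister moves, and they can restore the region structure of $R$ using only virtual crossings.
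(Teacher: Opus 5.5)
Your skeleton matches the paper's: reduce to one VRCC-move, check that the $r$-covering keeps the same set $S$ of crossings before and after the move, and show the two coverings differ by at most one VRCC-move.

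One small correction first. A crossing change does not preserve the index. Reversing the chord $c$ swaps its left and right sides, so $\Ind(c)$ becomes $-\Ind(c)$, while all other indices are unchanged; this is what the proof of Theorem~\ref{thm-ccov} actually uses. Since $r\mid\Ind(c)$ if and only if $r\mid -\Ind(c)$, your conclusion $S=S'$ still holds.

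The genuine gap is the last step. You leave it open, and you base it on a guessed definition of region under which your key sentence is unsupported. Suppose virtual crossings did not separate regions, for instance if regions were faces of a surface realization, whose boundaries pass straight through virtual crossings. Then virtualizing a crossing on $\partial R$ would reconfigure the faces, and a VRCC-move at the resulting face could change classical crossings outside $C(R)$. So the assertion that the move at $R^{(r)}$ changes exactly $C(R)\cap S$ does not follow, as your own ``main obstacle'' concedes. The detour-move fallback is not an argument either: nothing guarantees that detour moves can turn an arbitrary subset of classical crossings into the complete set of classical boundary crossings of one region.

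The missing observation is that the paper's definition removes the issue entirely. A region is a component of $\mathbb{R}^2\setminus|D|$, where $|D|$ has a vertex at every classical \emph{and} every virtual crossing. Virtualization only changes the decoration at a vertex, so $|D^{(r)}|=|D|$ and $R$ is still a region of $D^{(r)}$. The classical crossings on $\partial R$ in $D^{(r)}$ are then exactly $C(R)\cap S$. Hence $D'^{(r)}$ is obtained from $D^{(r)}$ by the VRCC-move at the same region $R$, or equals $D^{(r)}$ when $C(R)\cap S=\emptyset$. This is precisely the paper's three-case argument. Your concern about the endpoints is also moot, since virtualization does not touch the tail or head.
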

	
	This paper is organized as follows. In Section~\ref{sec2}, we review some fundamental concepts of knotoid theory. Section~\ref{sec3} is devoted to defining the $r$-covering of planar virtual knotoids and proving $K^{(r)}$ is an invariant of $K$, see Theorem~\ref{thm-invc}. In Section~\ref{sec4}, we turn our attention to the interaction between crossing changes and the $r$-coverings in planar virtual knotoids. The relevant results are presented in Theorem~\ref{thm-ccov}. Based on Theorem~\ref{thm-ccov}, we propose three constructions of families of planar virtual knotoids, which enable us to produce pairs of non-homotopic planar virtual knotoids. Throughout Section~\ref{sec5}, we present examples~\ref{ex:3}, \ref{ex:9} and~\ref{ex:8} for computing the Gordian distances between homotopic planar virtual knotoids by virtue of Theorem~\ref{thm-ccov}. Section~\ref{sec6} is dedicated to the relation between $\Delta$-moves and the $r$-coverings in planar virtual knotoids, see Theorem~\ref{thm-decov}. Finally, Section~\ref{sec7} is devoted to establishing the relationship between VRCC-moves and the $r$-coverings in planar virtual knotoids, see Theorem~\ref{thm-vrcov}.
	
	\section{Preliminaries} \label{sec2}
	
	\begin{definition}\label{def2.1}{\rm
			A \textit{knotoid diagram} $D$ in a two-dimensional surface $\Sigma$, possibly with boundary, is defined as a generic immersion of the unit interval $[0,1]$ into interior of $\Sigma$, namely a curve with finitely many isolated transverse double points endowed with standard over-arc and under-arc information, which are called \textit{classical crossings}. The images of $0$ and $1$, which are distinct from each other and any double point of the diagram, are referred to as the endpoints of the knotoid diagram, namely its \textit{tail} and \textit{head}, respectively. A knotoid diagram possesses a natural orientation, from tail to head. A trivial knotoid diagram is an embedding of the unit interval into the surface $\Sigma$. }
	\end{definition}
	
	The three types of \textit{Reidemeister moves} $\mathcal{R} = \{ \Omega_1, \Omega_2, \Omega_3 \}$ defined in Figure~\ref{fig1} and assumed be taken with all possible orientations of arcs act on knotoid diagrams.
	\begin{figure}[htbp]
		\centering
		\tikzset{every picture/.style={line width=0.75pt}}    
		\begin{tikzpicture}[x=0.75pt,y=0.75pt,yscale=-1,xscale=1]
			\draw  [black, very thick]  (100,100) -- (100,170) ;
			\draw  [black, very thick,<->]  (110,135) -- (130,135) ;
			\draw  [black, very thick]  (160,111) to [out=30,in=330] (160,159.8);	 
			\draw  [black, very thick]  (160,111) -- (130,170) ;
			\draw  [black, very thick]  (152.67,145) -- (160,159.8) ;
			\draw  [black, very thick]  (230,100) -- (230,170) ; 
			\draw  [black, very thick]  (240,100) -- (240,170) ; 
			\draw  [black, very thick,<->]  (250,135) -- (270,135) ;
			\draw  [black, very thick]  (270,100) -- (310,135) ; 
			\draw  [black, very thick]  (310,135) -- (270,170) ;
			\draw  [black, very thick]  (365,155) -- (373,155) ; 
			\draw  [black, very thick]  (388.98,142.03) -- (365,170) ; 
			\draw  [black, very thick]  (365,100) -- (425,170) ;
			\draw  [black, very thick]  (400.98,128.03) -- (425,100) ;
			\draw  [black, very thick]  (130,100) -- (142.71,125.31) ;
			\draw  [black, very thick]  (288.68,124.87) -- (280,135) ;
			\draw  [black, very thick]  (310,100) -- (296.87,114.7) ; 
			\draw  [black, very thick]  (280,135) -- (288.19,144.55) ; 
			\draw  [black, very thick]  (296.39,154.01) -- (310,170) ;
			\draw  [black, very thick]  (385,155) -- (405,155) ;
			\draw  [black, very thick]  (417,155) -- (425,155) ;
			\draw  [black, very thick,<->]  (425,135) -- (445,135) ;
			\draw  [black, very thick]  (445,115) -- (453,115) ;
			\draw  [black, very thick]  (468.98,142.03) -- (445,170) ;
			\draw  [black, very thick]  (445,100) -- (505,170) ;
			\draw  [black, very thick]  (480.98,128.03) -- (505,100) ;
			\draw  [black, very thick]  (465,115) -- (485,115) ;
			\draw  [black, very thick]  (497,115) -- (505,115) ;
			\draw (111,115) node [anchor=north west][inner sep=0.75pt]    {$\Omega_1$};
			\draw (251,115) node [anchor=north west][inner sep=0.75pt]    {$\Omega_2$};
			\draw (426,115) node [anchor=north west][inner sep=0.75pt]    {$\Omega_3$};
		\end{tikzpicture}
		\caption{Reidemeister moves $\Omega_1$, $\Omega_2$ and $\Omega_3$ on diagrams.}
		\label{fig1}
	\end{figure}
	
	\begin{definition}{\rm
			Two knotoid diagrams $D$ and $D'$ are said to be \textit{equivalent} if one can be transformed into the other via a sequence of Reidemeister moves and planar isotopies performed away from the tail and head. The corresponding equivalence classes are called \textit{knotoids}. When $\Sigma$ is either $S^2$ or $\mathbb{R}^2$, the corresponding knotoid is referred to as \textit{spherical} or \textit{planar}  accordingly.}
	\end{definition}
	
	Notably, we prohibit moving arcs over or under the two endpoints of the knotoid, as illustrated in Figure~\ref{fig2}. Note that if the moves $\Phi_+$ and $\Phi_-$ are permitted, any knotoid diagram in any surface can be readily transformed into the trivial knotoid diagram.
	\begin{figure}[htbp]
		\begin{center}
			\tikzset{every picture/.style={line width=0.75pt}}       
			\begin{tikzpicture}[x=0.75pt,y=0.75pt,yscale=-1,xscale=1]
				\draw  [black, very thick]  (70,50) -- (70,70) ;
				\draw  [black, very thick]  (70,90) -- (70,110) ;
				\draw  [black, very thick]  (40,80) -- (100,80) ;
				\draw  [black, very thick]  (240,50) -- (240,110) ;
				\draw  [black, very thick]  (160,80) -- (220,80) ;
				\draw  [black, very thick]  (330,50) -- (330,110) ; 
				\draw  [black, very thick]  (300,80) -- (320,80) ; 
				\draw  [black, very thick]  (340,80) -- (360,80) ; 
				\draw  [black, very thick,<->]  (120,80) -- (140,80) ;
				\draw  [black, very thick,<->]  (260,80) -- (280,80) ;
				\draw  [fill={rgb, 255:red, 0; green, 0; blue, 0 }  ,fill opacity=1 ][black, very thick]  (355.48,80) .. controls (355.48,77.5) and (357.5,75.48) .. (360,75.48) .. controls (362.5,75.48) and (364.52,77.5) .. (364.52,80) .. controls (364.52,82.5) and (362.5,84.52) .. (360,84.52) .. controls (357.5,84.52) and (355.48,82.5) .. (355.48,80) -- cycle ;
				\draw  [fill={rgb, 255:red, 0; green, 0; blue, 0 }  ,fill opacity=1 ][black, very thick]  (215.48,80) .. controls (215.48,77.5) and (217.5,75.48) .. (220,75.48) .. controls (222.5,75.48) and (224.52,77.5) .. (224.52,80) .. controls (224.52,82.5) and (222.5,84.52) .. (220,84.52) .. controls (217.5,84.52) and (215.48,82.5) .. (215.48,80) -- cycle ;
				\draw  [fill={rgb, 255:red, 0; green, 0; blue, 0 }  ,fill opacity=1 ][black, very thick]  (95.48,80) .. controls (95.48,77.5) and (97.5,75.48) .. (100,75.48) .. controls (102.5,75.48) and (104.52,77.5) .. (104.52,80) .. controls (104.52,82.5) and (102.5,84.52) .. (100,84.52) .. controls (97.5,84.52) and (95.48,82.5) .. (95.48,80) -- cycle ;
				\draw (120,61) node [anchor=north west][inner sep=0.75pt]    {$\Phi_+$};
				\draw (260,61) node [anchor=north west][inner sep=0.75pt]    {$\Phi_-$};
			\end{tikzpicture}
			\caption{Forbidden knotoid moves $\Phi_+$ and $\Phi_-$.}  \label{fig2}
		\end{center}
	\end{figure}
	
	We now recall the bracket polynomial of knotoids defined by Turaev~\cite{Tu12}.

	Let $D$ be a knotoid diagram in a surface $\Sigma$. A \textit{state} of $D$ is a map $s$ assigning to each crossing of $D$ a value in $\{-1, +1\}$. For a state $s$, we apply the A-smoothing at all crossings with $s=+1$ and the B-smoothing at all crossings with $s=-1$. The result is a compact $1$-manifold $D_s \subset \Sigma$ consisting of an embedded segment and several disjoint embedded circles.
	
	\begin{definition}{\rm				
			The \textit{bracket polynomial} of $D$ is defined by
			$$
			\langle D \rangle = \sum_{s \in S(D)} A^{\sigma_s} \left(-A^2 - A^{-2}\right)^{|s|-1},
			$$
			where $S(D)$ denotes the set of all states of $D$, $\sigma_s \in \mathbb{Z}$ is the sum of the values $\pm 1$ assigned by $s$ to the all crossings of $D$, and $|s|$ is the number of connected components of $D_s$.}
	\end{definition}			
	
	For any classical crossing $c$ in a knotoid diagram $D$, a sign function $\operatorname{sgn}(c) \in \{+1, -1\}$ is defined, as illustrated in Figure~\ref{fig3}~(a) and ~(b), and the $writhe$ $w(D)$ of the diagram $D$ is defined by
	$$
	w(D) = \sum_{c \in D} \operatorname{sgn}(c), \label{eq:writhe}
	$$
	where the sum is taken over all classical crossings of $D$.
	\begin{figure}[htbp]
		\begin{center}
			\tikzset{every picture/.style={line width=0.75pt}}         
			\begin{tikzpicture}[x=0.75pt,y=0.75pt,yscale=-1,xscale=1]
				\draw [black, very thick, ->]  (80,170) -- (140,110) ;
				\draw [black, very thick, ->]  (105,135) -- (80,110) ; 
				\draw [black, very thick]    (115,145) -- (140,170) ;
				\draw [black, very thick, ->]  (270,170) -- (210,110) ;
				\draw  [black, very thick, ->]  (245,135) -- (270,110) ;
				\draw  [black, very thick]  (235,145) -- (210,170) ; 
				\draw [black, very thick]    (410,170) -- (350,110) ;
				\draw [black, very thick]    (350,170) -- (410,110) ; 
				\draw  [black, very thick]  (375.48,140) .. controls (375.48,137.5) and (377.5,135.48) .. (380,135.48) .. controls (382.5,135.48) and (384.52,137.5) .. (384.52,140) .. controls (384.52,142.5) and (382.5,144.52) .. (380,144.52) .. controls (377.5,144.52) and (375.48,142.5) .. (375.48,140) -- cycle ;
				\draw (132.6,132) node [anchor=north west][inner sep=0.75pt]  [font=\large]  {$c$};
				\draw (81,174) node [anchor=north west][inner sep=0.75pt]  [font=\small]  {$\operatorname{sgn}( c) \ =\ +1$};
				\draw (208,174) node [anchor=north west][inner sep=0.75pt]  [font=\small]  {$\operatorname{sgn}( c) \ =\ -1$};
				\draw (262.6,132) node [anchor=north west][inner sep=0.75pt]  [font=\large]  {$c$};
				\draw (103,192) node [anchor=north west][inner sep=0.75pt]    {(a)};
				\draw (232,192) node [anchor=north west][inner sep=0.75pt]    {(b)};
				\draw (372,192) node [anchor=north west][inner sep=0.75pt]    {(c)};
			\end{tikzpicture}
		\end{center}
		\caption{The sign $\operatorname{sgn} (c)$ of a crossing $c$ and a virtual crossing.} \label{fig3}
	\end{figure}
	
	The product
	$$
	\langle D\rangle_\circ = (-A^3)^{-w(D)}\langle D\rangle,
	$$
	defines the \textit{normalized bracket polynomial}, which is a knotoid invariant according to \cite{Tu12}.

	\begin{definition}{\rm
			A \textit{virtual knotoid diagram} is defined to be a knotoid diagram in a surface $\Sigma$ augmented with an additional combinatorial structure, which we refer to as \textit{virtual crossings} (see Figure~\ref{fig3}~(c)).}
	\end{definition}
	
	\begin{definition}{\rm
			Two virtual knotoid diagrams in a surface $\Sigma$ are said to be \textit{equivalent} if they can be transformed into each other by a finite sequence of moves, each of which is either in $\textit{g} \mathcal{R} = \{ \Omega_1, \Omega_2, \Omega_3, \Omega_1^v,$ $\Omega_2^v, \Omega_3^v, \Omega_3^m, \Omega_v \}$, see Figures~\ref{fig1} and~\ref{fig4}, or an isotopy of the surface $\Sigma$. The equivalence classes of such diagrams are called \textit{virtual knotoids}.}
	\end{definition}
	\begin{figure}[htbp]
		\begin{center}
			\tikzset{every picture/.style={line width=0.75pt}}         
			\begin{tikzpicture}[x=0.75pt,y=0.75pt,yscale=-1,xscale=1] 
				\draw  [black, very thick]  (40,20) -- (40,80) ;
				\draw  [black, very thick]  (70,20) -- (100,70) ;
				\draw  [black, very thick,<->]  (50,50) -- (70,50) ; 
				\draw  [black, very thick]  (70,80) -- (100,30) ;
				\draw  [black, very thick]  (170,20) -- (140,50) ; 
				\draw  [black, very thick]  (140,50) -- (170,80) ;
				\draw  [black, very thick]  (140,20) -- (170,50) ;
				\draw  [black, very thick]  (170,50) -- (140,80) ;
				\draw  [black, very thick,<->]  (180,50) -- (200,50) ;
				\draw  [black, very thick]  (210,20) -- (210,80) ;
				\draw  [black, very thick]  (220,20) -- (220,80) ;
				\draw  [black, very thick]  (260,30) -- (320,30) ;
				\draw  [black, very thick]  (270,20) -- (310,80) ;
				\draw  [black, very thick]  (310,20) -- (270,80) ;
				\draw  [black, very thick]  (360,70) -- (420,70) ;
				\draw  [black, very thick]  (370,20) -- (410,80) ;
				\draw  [black, very thick]  (410,20) -- (370,80) ;
				\draw  [black, very thick,<->]  (330,50) -- (350,50) ;
				\draw  [black, very thick]  (40,120) -- (100,120) ;
				\draw  [black, very thick]  (50,110) -- (67.23,135.85) ;
				\draw  [black, very thick]  (90,110) -- (50,170) ;
				\draw  [black, very thick]  (73.69,145.54) -- (90,170) ;
				\draw  [black, very thick]  (140,160) -- (200,160) ;
				\draw  [black, very thick]  (150,110) -- (166.31,134.46) ;
				\draw  [black, very thick]  (190,110) -- (150,170) ;
				\draw  [black, very thick,<->]  (110,140) -- (130,140) ;
				\draw  [black, very thick]  (173.95,145.92) -- (190,170) ;
				\draw  [black, very thick]  (270,110) -- (270,170) ;
				\draw  [black, very thick]  (240,140) -- (300,140) ;
				\draw  [black, very thick]  (420,110) -- (420,170) ;
				\draw  [black, very thick]  (340,140) -- (400,140) ;
				\draw  [black, very thick,<->]  (310,140) -- (330,140) ;
				\draw  [black, very thick]  (100,30) to [out=30,in=330] (100,70);
				\draw  [fill={rgb, 255:red, 0; green, 0; blue, 0 }  ,fill opacity=1 ][black, very thick]  (395.48,140) .. controls (395.48,137.5) and (397.5,135.48) .. (400,135.48) .. controls (402.5,135.48) and (404.52,137.5) .. (404.52,140) .. controls (404.52,142.5) and (402.5,144.52) .. (400,144.52) .. controls (397.5,144.52) and (395.48,142.5) .. (395.48,140) -- cycle ;
				\draw  [fill={rgb, 255:red, 0; green, 0; blue, 0 }  ,fill opacity=1 ][black, very thick]  (295.48,140) .. controls (295.48,137.5) and (297.5,135.48) .. (300,135.48) .. controls (302.5,135.48) and (304.52,137.5) .. (304.52,140) .. controls (304.52,142.5) and (302.5,144.52) .. (300,144.52) .. controls (297.5,144.52) and (295.48,142.5) .. (295.48,140) -- cycle ;
				\draw  [black, very thick]  (152.03,159.76) .. controls (152.03,157.27) and (154.05,155.24) .. (156.55,155.24) .. controls (159.04,155.24) and (161.07,157.27) .. (161.07,159.76) .. controls (161.07,162.26) and (159.04,164.28) .. (156.55,164.28) .. controls (154.05,164.28) and (152.03,162.26) .. (152.03,159.76) -- cycle ;
				\draw  [black, very thick]  (398.6,70.05) .. controls (398.6,67.55) and (400.62,65.53) .. (403.12,65.53) .. controls (405.61,65.53) and (407.64,67.55) .. (407.64,70.05) .. controls (407.64,72.54) and (405.61,74.57) .. (403.12,74.57) .. controls (400.62,74.57) and (398.6,72.54) .. (398.6,70.05) -- cycle ;
				\draw  [black, very thick]  (372.31,69.76) .. controls (372.31,67.27) and (374.34,65.24) .. (376.83,65.24) .. controls (379.33,65.24) and (381.35,67.27) .. (381.35,69.76) .. controls (381.35,72.26) and (379.33,74.28) .. (376.83,74.28) .. controls (374.34,74.28) and (372.31,72.26) .. (372.31,69.76) -- cycle ;
				\draw  [black, very thick]  (265.48,140) .. controls (265.48,137.5) and (267.5,135.48) .. (270,135.48) .. controls (272.5,135.48) and (274.52,137.5) .. (274.52,140) .. controls (274.52,142.5) and (272.5,144.52) .. (270,144.52) .. controls (267.5,144.52) and (265.48,142.5) .. (265.48,140) -- cycle ;
				\draw  [black, very thick]  (178.88,160.05) .. controls (178.88,157.55) and (180.91,155.53) .. (183.4,155.53) .. controls (185.9,155.53) and (187.92,157.55) .. (187.92,160.05) .. controls (187.92,162.54) and (185.9,164.57) .. (183.4,164.57) .. controls (180.91,164.57) and (178.88,162.54) .. (178.88,160.05) -- cycle ;
				\draw  [black, very thick]  (78.88,120.05) .. controls (78.88,117.55) and (80.91,115.53) .. (83.4,115.53) .. controls (85.9,115.53) and (87.92,117.55) .. (87.92,120.05) .. controls (87.92,122.54) and (85.9,124.57) .. (83.4,124.57) .. controls (80.91,124.57) and (78.88,122.54) .. (78.88,120.05) -- cycle ;
				\draw  [black, very thick]  (52.31,120.05) .. controls (52.31,117.55) and (54.34,115.53) .. (56.83,115.53) .. controls (59.33,115.53) and (61.35,117.55) .. (61.35,120.05) .. controls (61.35,122.54) and (59.33,124.57) .. (56.83,124.57) .. controls (54.34,124.57) and (52.31,122.54) .. (52.31,120.05) -- cycle ;
				\draw  [black, very thick]  (385.48,50) .. controls (385.48,47.5) and (387.5,45.48) .. (390,45.48) .. controls (392.5,45.48) and (394.52,47.5) .. (394.52,50) .. controls (394.52,52.5) and (392.5,54.52) .. (390,54.52) .. controls (387.5,54.52) and (385.48,52.5) .. (385.48,50) -- cycle ;
				\draw  [black, very thick]  (285.48,50) .. controls (285.48,47.5) and (287.5,45.48) .. (290,45.48) .. controls (292.5,45.48) and (294.52,47.5) .. (294.52,50) .. controls (294.52,52.5) and (292.5,54.52) .. (290,54.52) .. controls (287.5,54.52) and (285.48,52.5) .. (285.48,50) -- cycle ;
				\draw  [black, very thick]  (298.6,29.76) .. controls (298.6,27.27) and (300.62,25.24) .. (303.12,25.24) .. controls (305.61,25.24) and (307.64,27.27) .. (307.64,29.76) .. controls (307.64,32.26) and (305.61,34.28) .. (303.12,34.28) .. controls (300.62,34.28) and (298.6,32.26) .. (298.6,29.76) -- cycle ;
				\draw  [black, very thick]  (272.31,29.62) .. controls (272.31,27.12) and (274.34,25.1) .. (276.83,25.1) .. controls (279.33,25.1) and (281.35,27.12) .. (281.35,29.62) .. controls (281.35,32.12) and (279.33,34.14) .. (276.83,34.14) .. controls (274.34,34.14) and (272.31,32.12) .. (272.31,29.62) -- cycle ;
				\draw  [black, very thick]  (150.48,65) .. controls (150.48,62.5) and (152.5,60.48) .. (155,60.48) .. controls (157.5,60.48) and (159.52,62.5) .. (159.52,65) .. controls (159.52,67.5) and (157.5,69.52) .. (155,69.52) .. controls (152.5,69.52) and (150.48,67.5) .. (150.48,65) -- cycle ;
				\draw  [black, very thick]  (150.48,35) .. controls (150.48,32.5) and (152.5,30.48) .. (155,30.48) .. controls (157.5,30.48) and (159.52,32.5) .. (159.52,35) .. controls (159.52,37.5) and (157.5,39.52) .. (155,39.52) .. controls (152.5,39.52) and (150.48,37.5) .. (150.48,35) -- cycle ;
				\draw  [black, very thick]  (83.74,49.76) .. controls (83.74,47.27) and (85.76,45.24) .. (88.26,45.24) .. controls (90.76,45.24) and (92.78,47.27) .. (92.78,49.76) .. controls (92.78,52.26) and (90.76,54.28) .. (88.26,54.28) .. controls (85.76,54.28) and (83.74,52.26) .. (83.74,49.76) -- cycle ;
				\draw (50.5,28) node [anchor=north west][inner sep=0.75pt]    {$\Omega_{1}^{v}$};
				\draw (180.5,28) node [anchor=north west][inner sep=0.75pt]    {$\Omega_{2}^{v}$};
				\draw (330.5,28) node [anchor=north west][inner sep=0.75pt]    {$\Omega_{3}^{v}$};
				\draw (110.5,118) node [anchor=north west][inner sep=0.75pt]    {$\Omega_{3}^{m}$};
				\draw (310.5,120) node [anchor=north west][inner sep=0.75pt]    {$\Omega_{v}$};
			\end{tikzpicture}
			\caption{Virtual Reidemeister moves $\Omega_1^v$, $\Omega_2^v$, $\Omega_3^v$, the mixed move $\Omega_3^{m}$ and $\Omega_{v}$ move.} \label{fig4}
		\end{center}
	\end{figure}
	
	A \textit{crossing change} is a local move that swaps the under-arc and over-arc at a single crossing of a diagram as presented in Figure~\ref{fig5}. 
	\begin{figure}[htbp]
		\begin{center}
			\tikzset{every picture/.style={line width=0.75pt}}         
			\begin{tikzpicture}[x=0.75pt,y=0.75pt,yscale=-1,xscale=1]
				\draw [black, very thick]  (80,490.67) -- (139.67,430) ;
				\draw [black, very thick]  (105,455.67) -- (80,430.67) ; 
				\draw [black, very thick]  (115,465.67) -- (140,490.67) ;
				\draw [black, very thick]  (310,490) -- (250,430) ;
				\draw [black, very thick]  (285,455) -- (310,430) ;
				\draw [black, very thick]  (275,465) -- (250,490) ;
				\draw [black, very thick,<->]  (170,460) -- (210,460) ;
			\end{tikzpicture}
			\caption{A crossing change.} \label{fig5}
		\end{center} 
	\end{figure}
	
	\begin{definition}\label{def2.6}{\rm
			Let $K$ and $K'$ be planar virtual knotoids. $K$ and $K'$ are said to be \textit{homotopic} if a diagram of one can be transformed into a diagram of the other via a finite sequence of crossing changes, generalized Reidemeister moves and planar isotopies.}
	\end{definition}
	
	Planar knotoids possess multiple homotopy classes. For example, according to~\cite{FLV25}, one can see that $\chi_1$ and $\chi_2$ whose diagrams are presented in Figure~\ref{fig6}, are homotopic, but neither is homotopic to the trivial knotoid. They correspond to two of the four unifoils from Figure 1 in~\cite{Tu12}, where $\chi_2 = U$ and $\chi_1 = \operatorname{mir}(U)$, where $\operatorname{mir}(U)$ denotes taking of the mirror image of $U$.  	
	\begin{figure}[htbp]
		\begin{center}
			\tikzset{every picture/.style={line width=0.75pt}}       
			\begin{tikzpicture}[x=0.75pt,y=0.75pt,yscale=-1,xscale=1]
				\draw [black, very thick]    (78.36,71.74) .. controls (82.76,66.14) and (90.16,60.14) .. (100.16,60.14) .. controls (110.16,60.14) and (130.16,70.34) .. (129.96,90.34) .. controls (129.76,110.34) and (114.39,139.71) .. (69.96,139.94) .. controls (25.53,140.17) and (9.76,110.14) .. (9.96,90.14) .. controls (10.16,70.14) and (29.96,60.14) .. (39.96,60.14) .. controls (49.96,60.14) and (62.56,69.54) .. (90.04,100.34) ;
				\draw [black, very thick]    (53.38,100.27) .. controls (57.68,95.17) and (59.74,92.95) .. (64,87.88) ;
				\draw  [fill={rgb, 255:red, 0; green, 0; blue, 0 }  ,fill opacity=1 ][black, very thick]  (85.52,100.34) .. controls (85.52,97.84) and (87.54,95.82) .. (90.04,95.82) .. controls (92.54,95.82) and (94.56,97.84) .. (94.56,100.34) .. controls (94.56,102.84) and (92.54,104.86) .. (90.04,104.86) .. controls (87.54,104.86) and (85.52,102.84) .. (85.52,100.34) -- cycle ;
				\draw  [fill={rgb, 255:red, 0; green, 0; blue, 0 }  ,fill opacity=1 ][black, very thick]  (48.86,100.27) .. controls (48.86,97.77) and (50.88,95.75) .. (53.38,95.75) .. controls (55.88,95.75) and (57.9,97.77) .. (57.9,100.27) .. controls (57.9,102.77) and (55.88,104.79) .. (53.38,104.79) .. controls (50.88,104.79) and (48.86,102.77) .. (48.86,100.27) -- cycle ;
				\draw [black, very thick]    (221.61,71.67) .. controls (217.2,66.07) and (209.8,60.07) .. (199.8,60.07) .. controls (189.8,60.08) and (169.81,70.28) .. (170.01,90.28) .. controls (170.22,110.28) and (185.6,139.65) .. (230.03,139.86) .. controls (274.46,140.07) and (290.22,110.04) .. (290.01,90.04) .. controls (289.8,70.04) and (270,60.05) .. (260,60.05) .. controls (250,60.06) and (237.4,69.46) .. (209.94,100.27) ;
				\draw [black, very thick]    (246.6,100.34) .. controls (242.3,95.24) and (240.24,93.02) .. (235.98,87.95) ;
				\draw  [fill={rgb, 255:red, 0; green, 0; blue, 0 }  ,fill opacity=1 ][black, very thick]  (242.08,100.34) .. controls (242.08,97.84) and (244.1,95.82) .. (246.6,95.82) .. controls (249.09,95.82) and (251.12,97.84) .. (251.12,100.34) .. controls (251.12,102.84) and (249.09,104.86) .. (246.6,104.86) .. controls (244.1,104.86) and (242.08,102.84) .. (242.08,100.34) -- cycle ;
				\draw  [fill={rgb, 255:red, 0; green, 0; blue, 0 }  ,fill opacity=1 ][black, very thick]  (205.42,100.27) .. controls (205.42,97.77) and (207.44,95.75) .. (209.94,95.75) .. controls (212.43,95.75) and (214.46,97.77) .. (214.46,100.27) .. controls (214.46,102.77) and (212.43,104.79) .. (209.94,104.79) .. controls (207.44,104.79) and (205.42,102.77) .. (205.42,100.27) -- cycle ;
				\draw  [black, very thick]  (233.88,143.44) -- (226.36,139.53) -- (233.85,136.34) ;
				\draw  [black, very thick]  (74.51,143.42) -- (66.8,139.88) -- (74.13,136.32) ;
				\node at (70,160) {$\chi_1$};
				\node at (230,160) {$\chi_2$};
		\end{tikzpicture}
			\caption{The diagrams of knotoids $\chi_1$ and $\chi_2$.}  \label{fig6}
		\end{center}
	\end{figure}	
	
	\begin{definition}\label{def2.7}{\rm
			Let $K$ and $K'$ be homotopic planar virtual knotoids. The minimal number of crossing changes required to transform a diagram of $K$ into a diagram of $K'$, where the minimum is taken over all diagrams of $K$ and $K'$, is called the \textit{Gordian distance} between $K$ and $K'$, denoted by $d_G(K, K')$. If $K$ is homotopic to the trivial planar knotoid $O$, then $d_G(K, O)$ is called the \textit{unknotting number} of $K$.}
	\end{definition}
	
	In 1989, Murakami and Nakanishi~\cite{MN89} proposed the notion of the $\Delta$-\textit{move} on link diagrams and proved that $\Delta$-move is an unknotting operation for any classical knot. Although any knot can be unknotted by $\Delta$-moves, this does not hold for knotoids. A $\Delta$-move on a knotoid diagram is the replacement, in a disk disjoint from the endpoints and all other crossings, of one of the local three-strand tangles in Figure~\ref{fig7} by the other; all oriented variants are allowed. 
	
	\begin{definition}\label{def2.8}{\rm
			Let $K$ and $K'$ be planar virtual knotoids. $K$ and $K'$ are said to be $\Delta$-\textit{homotopic} if a diagram of one can be transformed into a diagram of the other via a finite sequence of $\Delta$-moves, generalized Reidemeister moves and planar isotopies.}
	\end{definition}
	
	\begin{figure}[htbp]
		\begin{center}
			\tikzset{every picture/.style={line width=0.75pt}} 
			\begin{tikzpicture}[x=0.75pt,y=0.75pt,yscale=-1,xscale=1]
				\draw  [black, very thick]  (89.88,288.47) -- (80,300) ;
				\draw  [black, very thick]  (80,230) -- (105.27,259.48) ;
				\draw  [black, very thick]  (132,285) -- (140,285) ;
				\draw  [black, very thick,<->]  (152,265) -- (208,265) ;
				\draw  [black, very thick]  (220,245) -- (228,245) ;
				\draw  [black, very thick]  (220,230) -- (244.88,259.02) ;
				\draw  [black, very thick]  (220,300) -- (263.33,249.45) ;
				\draw  [black, very thick]  (240,245) -- (280,245) ;
				\draw  [black, very thick]  (80,285) -- (120,285) ;
				\draw  [black, very thick]  (114.62,270.39) -- (140,300) ;
				\draw  [black, very thick]  (140,230) -- (96.77,280.44) ;
				\draw  [black, very thick]  (271.01,240.49) -- (280,230) ;
				\draw  [black, very thick]  (255.03,270.87) -- (280,300) ;
				\node at (180,251) {$\Delta$-move};
			\end{tikzpicture}
			\caption{A $\Delta$-move.}  \label{fig7}
		\end{center}
	\end{figure}
	
	\begin{definition}\label{def2.9}{\rm
			Let $K$ and $K'$ be $\Delta$-homotopic planar virtual knotoids. The $\Delta$-\textit{Gordian  distance} between $K$ and $K'$, denoted by $d_\Delta(K, K')$, is the minimal number of $\Delta$-moves needed to convert any diagram of $K$ into a diagram of $K'$, where the minimum is taken over all diagrams of $K$ and~$K'$.}
	\end{definition}
	
	Shimizu~\cite{Sh14} introduced the notion of region crossing changes on knot diagrams and demonstrated that this local move serves as an unknotting operation for knots. A $region$ of a knot diagram is a connected component of the complement in $\mathbb{R}^2$ of the knot diagram, where each crossing has been replaced by a vertex. A \textit{region crossing change}, shortly, RCC-move, at a region $R$ is defined as reversing every crossing lying on the boundary $\partial R$ of $R$. A straightforward observation is that applying RCC-move twice at the same region $R$ in a diagram $D$ yields $D$ itself. Kadokami~\cite{Ka12} introduced the \textit{diagrammatic region crossing change} for virtual link diagrams, which consists of performing a crossing change on every classical crossing in a given region while leaving virtual crossings unchanged. We generalize this operation to planar virtual knotoids and define the \textit{virtual region crossing change}, or, shortly, VRCC-move.
	
	Let $D$ denote a virtual knotoid diagram on $\mathbb{R}^2$. Let $|D|$ be the $4$-valent graph constructed by replacing every classical crossing and virtual crossing in $D$ with a vertex. Each connected component of $\mathbb{R}^2 \setminus |D|$ is referred to as a \textit{region} of $D$.
	
	\begin{definition}{\rm
			A \textit{virtual region crossing change}, or shortly, \textit{VRCC-move} at a region $R$ of a planar virtual knotoid diagram is defined to be performing one crossing change at each distinct classical crossing incident to} the boundary $\partial R$ of $R$, while virtual crossings remain unaltered. 
	\end{definition}
	
	As shown in Figure~\ref{fig28}, applying a single VRCC-move at region $R$ transforms a planar virtual knotoid diagram $D$ into a diagram $D'$.
	
	\begin{definition}\label{def2.11}{\rm
			Let $K$ and $K'$ be planar virtual knotoids. $K$ and $K'$ are said to be VRCC-\textit{homotopic} if the diagram of one can be transformed into that of the other by a finite sequence of VRCC-moves, generalized Reidemeister moves and planar isotopies.}
	\end{definition}
	
	\begin{figure}[htbp]
		\begin{center}
			\tikzset{every picture/.style={line width=0.75pt}}
			\begin{tikzpicture}[x=0.75pt,y=0.75pt,yscale=-1,xscale=1]
				\draw [black, very thick]    (368.25,83.75) .. controls (372.29,68.27) and (382.25,62) .. (395.33,59.33) .. controls (408.42,56.67) and (427.5,55.83) .. (433.35,61.75) ; 
				\draw [color={rgb, 255:red, 0; green, 0; blue, 0 }  ,draw opacity=1 ][black, very thick]    (101.58,73.33) .. controls (108.17,75.25) and (125.33,80.58) .. (129.38,84.13) ;
				\draw [black, very thick]    (368.5,82.83) .. controls (361.58,105) and (347.83,99.25) .. (341.58,91.67) .. controls (335.33,84.08) and (337.67,58.08) .. (341,46.92) .. controls (344.33,35.75) and (354.25,30.33) .. (367.5,29.92) .. controls (380.75,29.5) and (397.58,37.08) .. (405.33,50.17) ;
				\draw [black, very thick]    (410.67,64.92) .. controls (413.94,90.35) and (398.03,119.17) .. (369.9,118.95) .. controls (341.76,118.72) and (327.74,107.89) .. (324.74,94.43) .. controls (321.74,80.98) and (327.67,75.75) .. (332.67,73.83) ;
				\draw  [black, very thick]  (363.74,115.4) -- (371.41,119.03) -- (364.04,122.5) ;
				\draw [color={rgb, 255:red, 0; green, 0; blue, 0 }  ,draw opacity=1 ][black, very thick]    (344.83,71.17) .. controls (365.25,70.5) and (387.27,81.58) .. (390.9,84.13) ;
				\draw  [fill={rgb, 255:red, 0; green, 0; blue, 0 }  ,fill opacity=1 ][black, very thick]  (428.83,61.75) .. controls (428.83,59.25) and (430.86,57.23) .. (433.35,57.23) .. controls (435.85,57.23) and (437.87,59.25) .. (437.87,61.75) .. controls (437.87,64.25) and (435.85,66.27) .. (433.35,66.27) .. controls (430.86,66.27) and (428.83,64.25) .. (428.83,61.75) -- cycle ;
				\draw  [fill={rgb, 255:red, 0; green, 0; blue, 0 }  ,fill opacity=1 ][black, very thick]  (386.38,84.13) .. controls (386.38,81.63) and (388.4,79.61) .. (390.9,79.61) .. controls (393.39,79.61) and (395.42,81.63) .. (395.42,84.13) .. controls (395.42,86.62) and (393.39,88.65) .. (390.9,88.65) .. controls (388.4,88.65) and (386.38,86.62) .. (386.38,84.13) -- cycle ; 
				\draw [black, very thick]    (77,82.35) .. controls (76.69,93.69) and (91.18,103.76) .. (100.24,95.41) .. controls (109.29,87.06) and (107.18,60.12) .. (137,60) ;
				\draw [black, very thick]    (77.13,62.3) .. controls (77.16,43.74) and (85.01,29.92) .. (107.18,29.91) .. controls (129.35,29.9) and (149.63,47.25) .. (149.5,70.75) .. controls (149.38,94.25) and (135.75,119) .. (108.36,119.06) .. controls (80.97,119.12) and (61.75,107.88) .. (62.5,86.63) .. controls (63.25,65.38) and (96.39,71.91) .. (102.25,73.5) ; 
				\draw [black, very thick]    (156.17,57.35) .. controls (163,57.5) and (166.75,58.08) .. (171.83,61.75) ;
				\draw  [fill={rgb, 255:red, 0; green, 0; blue, 0 }  ,fill opacity=1 ][black, very thick]  (167.31,61.75) .. controls (167.31,59.25) and (169.34,57.23) .. (171.83,57.23) .. controls (174.33,57.23) and (176.35,59.25) .. (176.35,61.75) .. controls (176.35,64.25) and (174.33,66.27) .. (171.83,66.27) .. controls (169.34,66.27) and (167.31,64.25) .. (167.31,61.75) -- cycle ;
				\draw  [fill={rgb, 255:red, 0; green, 0; blue, 0 }  ,fill opacity=1 ][black, very thick]  (124.85,84.13) .. controls (124.85,81.63) and (126.88,79.6) .. (129.38,79.6) .. controls (131.87,79.6) and (133.9,81.63) .. (133.9,84.13) .. controls (133.9,86.62) and (131.87,88.65) .. (129.38,88.65) .. controls (126.88,88.65) and (124.85,86.62) .. (124.85,84.13) -- cycle ;
				\draw  [black, very thick]  (102.22,115.4) -- (109.88,119.03) -- (102.52,122.5) ;
				\draw  [color={rgb, 255:red, 208; green, 2; blue, 27 }  ,draw opacity=1 ][fill={rgb, 255:red, 208; green, 2; blue, 27 }  ,fill opacity=1 ] (68.5,82.25) .. controls (75,78.75) and (76.71,106.73) .. (94.13,103.25) .. controls (111.54,99.77) and (105.98,87.48) .. (112.67,82.67) .. controls (119.36,77.85) and (124.25,101.75) .. (135.33,89.08) .. controls (146.42,76.42) and (120.75,77.17) .. (125.25,69) .. controls (129.75,60.83) and (139,60.75) .. (144.08,65.33) .. controls (149.17,69.92) and (144.42,94.83) .. (133.58,103.58) .. controls (122.75,112.33) and (93.92,119) .. (79.38,109.5) .. controls (64.83,100) and (62,85.75) .. (68.5,82.25) -- cycle ; 
				\draw  [black, very thick]  (105.67,75.75) .. controls (105.67,73.25) and (107.69,71.23) .. (110.19,71.23) .. controls (112.69,71.23) and (114.71,73.25) .. (114.71,75.75) .. controls (114.71,78.25) and (112.69,80.27) .. (110.19,80.27) .. controls (107.69,80.27) and (105.67,78.25) .. (105.67,75.75) -- cycle ;
				\draw  [black, very thick]  (366.98,75.29) .. controls (366.98,72.79) and (369,70.77) .. (371.5,70.77) .. controls (373.99,70.77) and (376.02,72.79) .. (376.02,75.29) .. controls (376.02,77.79) and (373.99,79.81) .. (371.5,79.81) .. controls (369,79.81) and (366.98,77.79) .. (366.98,75.29) -- cycle ; 
				\draw  [color={rgb, 255:red, 208; green, 2; blue, 27 }  ,draw opacity=1 ][fill={rgb, 255:red, 208; green, 2; blue, 27 }  ,fill opacity=1 ] (329.92,81.85) .. controls (336.42,78.35) and (338.12,106.33) .. (355.54,102.85) .. controls (372.96,99.37) and (367.39,87.08) .. (374.08,82.27) .. controls (380.77,77.45) and (385.67,101.35) .. (396.75,88.68) .. controls (407.83,76.02) and (382.17,76.77) .. (386.67,68.6) .. controls (391.17,60.43) and (400.42,60.35) .. (405.5,64.93) .. controls (410.58,69.52) and (405.83,94.43) .. (395,103.18) .. controls (384.17,111.93) and (355.33,118.6) .. (340.79,109.1) .. controls (326.25,99.6) and (323.42,85.35) .. (329.92,81.85) -- cycle ;
				\draw  [black, very thick,->]  (180,83) -- (314,83) ;
				\draw (203,65) node [anchor=north west][inner sep=0.75pt]   [align=left] {VRCC-move};
				\draw (206,85) node [anchor=north west][inner sep=0.75pt]   [align=left] {at region $R$};
				\draw (96,126) node [anchor=north west][inner sep=0.75pt]    {$D$};
				\draw (355.5,126) node [anchor=north west][inner sep=0.75pt]    {$D'$};
				\draw (110,95) node [anchor=north west][inner sep=0.75pt]   [align=left] {$\displaystyle R$};
			\end{tikzpicture}
			\caption{$D'$ obtained from $D$ by the VRCC-move at region $R$.}  \label{fig28}
		\end{center}
	\end{figure}
	
	\begin{definition}\label{def2.12}{\rm
			Let $K$ and $K'$ be VRCC-homotopic planar virtual knotoids. The VRCC-\textit{Gordian distance} between $K$ and $K'$, denoted by $d_\text{VRCC}(K, K')$, is defined as the minimal number of VRCC-moves required to deform a diagram of $K$ into a diagram of $K'$, where the minimum is taken over all diagrams of $K$ and $K'$.}
	\end{definition}
	
	Gauss diagrams provide a concise combinatorial representation of knotoid diagrams by capturing all crossing interactions through directed chords. Both classical knotoids and virtual knotoids can be represented by means of Gauss diagrams, see~\cite{GPV00, PV94}.
	
	The Gauss diagram $G(D)$ of a knotoid diagram $D$ is an counterclockwise-oriented arc, with chords connecting the preimages of crossings in $D$. The starting point of $G(D)$ is called its \textit{tail}, and the endpoint is called its \textit{head}. Each chord in $G(D)$ corresponds bijectively to a crossing $c \in D$, and is also denoted by $c$. The orientation of a chord points from the over-arc to the under-arc of the corresponding crossing. We further mark the initial point of each chord $c$ by $\operatorname{sgn}(c)$, as shown in Figure~\ref{fig8}.
	\begin{figure}[htbp] 
		\centering
		\scalebox{0.6}{
			\tikzset{every picture/.style={line width=0.75pt}}
			\begin{tikzpicture}[x=0.75pt,y=0.75pt,yscale=-1,xscale=1]
				\draw  [draw opacity=0][dash pattern={on 1.69pt off 2.76pt}][black, very thick]  (189.12,131.19) .. controls (179.37,155.19) and (156.14,172.08) .. (129.03,172.08) .. controls (93.13,172.08) and (64.03,142.46) .. (64.03,105.93) .. controls (64.03,69.39) and (93.13,39.78) .. (129.03,39.78) .. controls (156.31,39.78) and (179.67,56.88) .. (189.31,81.13) -- (129.03,105.93) -- cycle ; \draw  [dash pattern={on 1.69pt off 2.76pt}][black, very thick]  (189.12,131.19) .. controls (179.37,155.19) and (156.14,172.08) .. (129.03,172.08) .. controls (93.13,172.08) and (64.03,142.46) .. (64.03,105.93) .. controls (64.03,69.39) and (93.13,39.78) .. (129.03,39.78) .. controls (156.31,39.78) and (179.67,56.88) .. (189.31,81.13) ;   
				\draw  [fill={rgb, 255:red, 0; green, 0; blue, 0 }  ,fill opacity=1 ][line width=1.5]  (184.6,131.19) .. controls (184.6,128.69) and (186.62,126.67) .. (189.12,126.67) .. controls (191.62,126.67) and (193.64,128.69) .. (193.64,131.19) .. controls (193.64,133.68) and (191.62,135.71) .. (189.12,135.71) .. controls (186.62,135.71) and (184.6,133.68) .. (184.6,131.19) -- cycle ;
				\draw  [fill={rgb, 255:red, 0; green, 0; blue, 0 }  ,fill opacity=1 ][line width=1.5]  (184.79,81.13) .. controls (184.79,78.63) and (186.81,76.61) .. (189.31,76.61) .. controls (191.8,76.61) and (193.83,78.63) .. (193.83,81.13) .. controls (193.83,83.63) and (191.8,85.65) .. (189.31,85.65) .. controls (186.81,85.65) and (184.79,83.63) .. (184.79,81.13) -- cycle ;
				\draw  [black, very thick,->]  (130,40) -- (130,172) ;
				\draw  [draw opacity=0][black, very thick]  (98.03,47.51) .. controls (107.26,42.59) and (117.82,39.8) .. (129.04,39.8) .. controls (140.47,39.8) and (151.21,42.7) .. (160.55,47.79) -- (129.04,104.36) -- cycle ; \draw  [black, very thick]  (98.03,47.51) .. controls (107.26,42.59) and (117.82,39.8) .. (129.04,39.8) .. controls (140.47,39.8) and (151.21,42.7) .. (160.55,47.79) ;  
				\draw  [draw opacity=0][black, very thick]  (159.46,164.69) .. controls (150.45,169.48) and (140.16,172.2) .. (129.24,172.2) .. controls (117.88,172.2) and (107.2,169.26) .. (97.94,164.1) -- (129.24,107.9) -- cycle ; \draw  [black, very thick]  (159.46,164.69) .. controls (150.45,169.48) and (140.16,172.2) .. (129.24,172.2) .. controls (117.88,172.2) and (107.2,169.26) .. (97.94,164.1) ;  
				\node at (130,36) [above] {\text{sgn}($c$)};
				\node at (135,100) [right] {$c$};
				\node at (200,80) [right] {the tail};
				\node at (200,130) [right]{the head};
		\end{tikzpicture}}
		\caption{Elements of a Gauss diagram $G(D)$.}
		\label{fig8}
	\end{figure}
	
	Specifically, the Gauss diagram of a virtual knotoid is constructed in the same way as for a classical knotoid with the modification that all virtual crossings are ignored. We can consider a weak equivalence relation for virtual knotoids related to string homotopy in thickened surfaces. Note that the endpoints of all chords lie on the arc between the tail and head of the Gauss diagram, rather than being distributed over the entire circle. This is the key distinction between the Gauss diagram of a virtual knotoid and that of a virtual knot.
	
	The three Reidemeister moves $\mathcal{R} = \{ \Omega_1, \Omega_2, \Omega_3 \}$ on Gauss diagrams are illustrated in Figures~\ref{fig25},~\ref{fig29} and~\ref{fig30}, while the crossing change and $\Delta$-move on Gauss diagrams are shown in Figures~\ref{fig9} and~\ref{fig10}, respectively.
	\begin{figure}[htbp]
		\centering
		\scalebox{0.6}{
			\tikzset{every picture/.style={line width=0.75pt}}       
}
		\caption{Reidemeister move $\Omega_1$ on a Gauss diagram.} \label{fig25}
	\end{figure}
	
	\begin{figure}[htbp]
		\centering
		\scalebox{0.6}{
			\tikzset{every picture/.style={line width=0.75pt}}        
			%
}
		\caption{Reidemeister move $\Omega_2$ on Gauss diagrams.}
		\label{fig29}
	\end{figure}
	\begin{figure}[htbp]
		\begin{subfigure}{0.54\textwidth}
			\centering
			\scalebox{0.6}{
				\tikzset{every picture/.style={line width=0.75pt}}
				%
}
		\end{subfigure}
		\caption{Reidemeister move $\Omega_3$ on Gauss diagrams.}
		\label{fig30}
	\end{figure}
	\begin{figure}[htbp]
		\centering
		\scalebox{0.6}{
			\tikzset{every picture/.style={line width=0.75pt}}        
			%
}
		\caption{A crossing change on a Gauss diagram.}
		\label{fig9}
	\end{figure}
	
	\begin{figure}[htbp]
		\begin{center}
			\begin{subfigure}{0.45\textwidth}
				\centering
				\scalebox{0.6}{
					\tikzset{every picture/.style={line width=0.75pt}}         
					%
}				
			\end{subfigure}
			\caption{$\Delta$-moves on Gauss diagrams.}
			\label{fig10}
		\end{center}
	\end{figure}
	
	The \textit{index} $\Ind(c)$, defined for virtual knots above, was used in~\cite{Ch16} and~\cite{Hi25}, and also referred to as the degree of a chord in~\cite{Je23}. We extend this definition to virtual knotoids.
	
	\begin{definition}{\rm
			Let $C(G(D))$ be the set of chords in the Gauss diagram $G(D)$ of a virtual knotoid diagram $D$, we denote by $\ell(c) = \{\ell_1(c), \ell_2(c), \dots, \ell_m(c)\}$ the set of chords that cross $c$ and point toward the left-hand side of $c$, and by $r(c) = \{r_1(c), r_2(c), \dots, r_n(c)\}$ the set of chords that cross $c$ and point toward the right-hand side of $c$, see Figure~\ref{fig33}.
			\begin{figure}[htbp]
				\begin{subfigure}{0.45\textwidth}
					\raggedleft
					\scalebox{0.6}{
						\tikzset{every picture/.style={line width=0.75pt}}
						\begin{tikzpicture}[x=0.75pt,y=0.75pt,yscale=-1,xscale=1] 
							\draw  [draw opacity=0][black, very thick]  (615.4,364.25) .. controls (605.65,388.25) and (582.42,405.14) .. (555.31,405.14) .. controls (519.41,405.14) and (490.31,375.52) .. (490.31,338.99) .. controls (490.31,302.45) and (519.41,272.84) .. (555.31,272.84) .. controls (582.59,272.84) and (605.95,289.94) .. (615.59,314.19) -- (555.31,338.99) -- cycle ; \draw  [black, very thick]  (615.4,364.25) .. controls (605.65,388.25) and (582.42,405.14) .. (555.31,405.14) .. controls (519.41,405.14) and (490.31,375.52) .. (490.31,338.99) .. controls (490.31,302.45) and (519.41,272.84) .. (555.31,272.84) .. controls (582.59,272.84) and (605.95,289.94) .. (615.59,314.19) ;
							\draw  [fill={rgb, 255:red, 0; green, 0; blue, 0 }  ,fill opacity=1 ][black, very thick]  (610.88,364.25) .. controls (610.88,361.75) and (612.91,359.73) .. (615.4,359.73) .. controls (617.9,359.73) and (619.92,361.75) .. (619.92,364.25) .. controls (619.92,366.74) and (617.9,368.77) .. (615.4,368.77) .. controls (612.91,368.77) and (610.88,366.74) .. (610.88,364.25) -- cycle ; 
							\draw  [fill={rgb, 255:red, 0; green, 0; blue, 0 }  ,fill opacity=1 ][black, very thick]  (611.07,314.19) .. controls (611.07,311.69) and (613.09,309.67) .. (615.59,309.67) .. controls (618.09,309.67) and (620.11,311.69) .. (620.11,314.19) .. controls (620.11,316.69) and (618.09,318.71) .. (615.59,318.71) .. controls (613.09,318.71) and (611.07,316.69) .. (611.07,314.19) -- cycle ;
							\draw  [black, very thick, ->]  (500.45,303.27) -- (593.91,285.82) ;
							\draw  [black, very thick, <-]  (491.18,349.82) -- (590.45,394.73) ;
							\draw  [black, very thick, <-]  (531.27,277.58) -- (554.09,405.09) ;
							\draw (543.64,324.36) node [anchor=north west][inner sep=0.75pt]    {$c$};
							\draw (602,277) node [anchor=north west][inner sep=0.75pt]    {$r_{i}( c)$};
							\draw (458,339) node [anchor=north west][inner sep=0.75pt]    {$\ell_{j}( c)$};
					\end{tikzpicture}}			
				\end{subfigure}
				\hfill
				\begin{subfigure}{0.45\textwidth}
					\raggedright
					\scalebox{0.6}{
						\tikzset{every picture/.style={line width=0.75pt}}
						\begin{tikzpicture}[x=0.75pt,y=0.75pt,yscale=-1,xscale=1]
							\draw  [draw opacity=0][black, very thick]  (537.85,161.8) .. controls (528.09,185.8) and (504.86,202.69) .. (477.75,202.69) .. controls (441.86,202.69) and (412.75,173.07) .. (412.75,136.54) .. controls (412.75,100.01) and (441.86,70.39) .. (477.75,70.39) .. controls (505.04,70.39) and (528.39,87.5) .. (538.03,111.74) -- (477.75,136.54) -- cycle ; \draw  [black, very thick]  (537.85,161.8) .. controls (528.09,185.8) and (504.86,202.69) .. (477.75,202.69) .. controls (441.86,202.69) and (412.75,173.07) .. (412.75,136.54) .. controls (412.75,100.01) and (441.86,70.39) .. (477.75,70.39) .. controls (505.04,70.39) and (528.39,87.5) .. (538.03,111.74) ;
							\draw  [fill={rgb, 255:red, 0; green, 0; blue, 0 }  ,fill opacity=1 ][black, very thick]  (533.33,161.8) .. controls (533.33,159.31) and (535.35,157.28) .. (537.85,157.28) .. controls (540.34,157.28) and (542.37,159.31) .. (542.37,161.8) .. controls (542.37,164.3) and (540.34,166.32) .. (537.85,166.32) .. controls (535.35,166.32) and (533.33,164.3) .. (533.33,161.8) -- cycle ;
							\draw  [fill={rgb, 255:red, 0; green, 0; blue, 0 }  ,fill opacity=1 ][black, very thick]  (533.51,111.74) .. controls (533.51,109.25) and (535.54,107.22) .. (538.03,107.22) .. controls (540.53,107.22) and (542.55,109.25) .. (542.55,111.74) .. controls (542.55,114.24) and (540.53,116.26) .. (538.03,116.26) .. controls (535.54,116.26) and (533.51,114.24) .. (533.51,111.74) -- cycle ;
							\draw  [black, very thick, ->]  (444.5,80) -- (510.56,80) ; 
							\draw  [black, very thick, ->]  (423.89,100) -- (532.11,100) ; 
							\draw  [black, very thick, <-]  (421.89,170) -- (533.89,170) ; 
							\draw  [black, very thick, <-]  (440.11,190) -- (516.11,190) ; 
							\draw  [black, very thick, <-]  (480,70.78) -- (480,202.44) ;
							\draw (426,75) node [anchor=north west][inner sep=0.75pt]  [font=\scriptsize]  {$+$};
							\draw (408,96) node [anchor=north west][inner sep=0.75pt]  [font=\scriptsize]  {$-$};
							\draw (482.44,118.67) node [anchor=north west][inner sep=0.75pt]    {$c$};
							\draw (540,166.5) node [anchor=north west][inner sep=0.75pt]  [font=\scriptsize]  {$+$};
							\draw (525,187) node [anchor=north west][inner sep=0.75pt]  [font=\scriptsize]  {$-$};
							\draw (526.67,72) node [anchor=north west][inner sep=0.75pt]    {$r^{+}( c)$};
							\draw (542.44,92) node [anchor=north west][inner sep=0.75pt]    {$r^{-}( c)$};
							\draw (380,162) node [anchor=north west][inner sep=0.75pt]    {$\ell^+(c)$};
							\draw (396.44,182) node [anchor=north west][inner sep=0.75pt]    {$\ell^-(c)$};		
					\end{tikzpicture}}
				\end{subfigure}
				\caption{Chords cross the chord $c$.}
				\label{fig33}
			\end{figure}
			
			Given a virtual knotoid diagram $D$ and a fixed chord $c \in C(D)$, we define the following sets:
			$$
			\begin{aligned}
				r^+(c) &= \{c' \in r(c) \mid \operatorname{sgn}(c') = 1\}, \\
				r^-(c) &= \{c' \in r(c) \mid \operatorname{sgn}(c') = -1\}, \\
				\ell^+(c) &= \{c' \in \ell(c) \mid \operatorname{sgn}(c') = 1\}, \\
				\ell^-(c) &= \{c' \in \ell(c) \mid \operatorname{sgn}(c') = -1\}.
			\end{aligned}
			$$ 
			The index (degree) of $c$ is defined by equation:
			$$
			\Ind(c) = |r^+(c)| - |r^-(c)| - |\ell^+(c)| + |\ell^-(c)|,
			$$
			where $|X|$ denotes the cardinality of a set $X$. In particular, if a chord $c$ is isolated, meaning that it has no intersection with any other chord, then $\Ind(c)=0$.
		}
	\end{definition}
	
	\begin{definition}{\rm
			Let $K$ be a planar virtual knotoid, $D$ a diagram of $K$, and $G$ the Gauss diagram corresponding to $D$. The \textit{index} of any crossing in $D$ equals the index of the corresponding chord in $G$.}
	\end{definition}
	
	In~\cite{FLV25}, a three-variable transcendental invariant of planar knotoids was defined. The invariant is given by: 
	$$
	H_D(t, y, z) = \sum_{\substack{c \in C(G(D)) \\ n \in \mathbb{N}}} \operatorname{sgn}(c) \left( t^{\operatorname{Ind}_c^n(z)} - 1 \right) y^n.
	$$
	
	This invariant has the following property.   
	\begin{theorem}~\cite{FLV25}\label{thm-H}
		Let $K$ and $K'$ be two homotopic planar knotoids. Then 
		$$
		H_K(t, y, z) - H_{K'}(t, y, z) = \sum_{n \in \mathbb{N}} \left( \sum_{m \in \mathbb{N}} {a_{n_m}} \left( t^{z^m_n} + t^{-(z^{-1})^m_n} - 2 \right) \right) y^n,
		$$
		where $a_{n_m} \in \mathbb{Z}$. For all $n \in \mathbb{N}$, the Gordian distance satisfies
		$$
		d_G(K, K') \geq \sum_{m \in \mathbb{N}} |a_{n_m}|.
		$$
	\end{theorem}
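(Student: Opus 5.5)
The plan is to prove the statement in three stages. First, $H_D(t,y,z)$ is an invariant of planar knotoids. Second, a single crossing change alters $H$ by exactly one ``elementary'' term in each $y^n$-coefficient. Third, the terms are summed along a shortest sequence of crossing changes. I take the definition of $\operatorname{Ind}_c^n(z)$ from~\cite{FLV25} as given. My working assumption is that it is a Laurent polynomial in $z$ refining $\Ind(c)$, built from the signed counts of chords crossing $c$ (weighted by their own $n$-th level data), and that it obeys the same left/right and sign bookkeeping as $\Ind(c)$.

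\emph{Step 1 (invariance).} I would check the Gauss diagram moves of Figures~\ref{fig25}, \ref{fig29} and~\ref{fig30} one by one. An $\Omega_1$ chord is isolated, so its index is $0$ and its summand $t^0-1$ vanishes. The two chords of an $\Omega_2$ move have opposite signs and equal indices, so their summands cancel. They also contribute to the index of any third chord in cancelling pairs, since they have the same direction and opposite sign. For $\Omega_3$, one checks that the three chords keep their signs and their sets $r^\pm,\ell^\pm$ up to a compensating exchange. Because the endpoints are fixed and chords live on an arc, no move across the tail or head occurs, so the check is local.

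\emph{Step 2 (one crossing change).} Let $c$ be the crossing that is changed. Its chord reverses direction and its sign flips. For every other chord $c'$ crossing $c$, the chord $c$ moves from $r^\pm(c')$ to $\ell^\mp(c')$ or vice versa. Both the side and the sign change, so its contribution to $\Ind(c')$, and I expect also to $\operatorname{Ind}^n_{c'}(z)$, is unchanged. Hence only the summand of $c$ changes. Reversing $c$ swaps the roles of $r(c)$ and $\ell(c)$, and together with the sign flip of $c$ this should send $\operatorname{Ind}_c^n(z)=P(z)$ to $-P(z^{-1})$. This is the substitution that produces the $z\mapsto z^{-1}$ in the target formula. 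The change in the $y^n$-coefficient is then
\[
\operatorname{sgn}(c)\bigl(t^{P(z)}-1\bigr)-\bigl(-\operatorname{sgn}(c)\bigr)\bigl(t^{-P(z^{-1})}-1\bigr)=\pm\bigl(t^{P(z)}+t^{-P(z^{-1})}-2\bigr).
\]
This has exactly the shape $t^{z^m_n}+t^{-(z^{-1})^m_n}-2$ with coefficient $\pm1$.

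\emph{Step 3 (summation).} Take diagrams realizing $d=d_G(K,K')$ crossing changes, interspersed with Reidemeister moves. By Step 1 the Reidemeister moves contribute nothing. Summing the $d$ contributions from Step 2 gives the displayed formula, with $a_{n_m}$ equal to the signed number of changes producing the $m$-th elementary term at level $n$. The triangle inequality then gives $\sum_m |a_{n_m}|\le d$ for each $n$.

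The main obstacle is making the coefficients $a_{n_m}$ well defined. The elementary terms $t^{P}+t^{-P(z^{-1})}-2$ must be shown linearly independent, with the identification of $P$ with $-P(z^{-1})$ taken into account, so that the decomposition of $H_K-H_{K'}$ is unique and the bound does not depend on the chosen sequence. I would handle this by grouping distinct exponents in $\mathbb{Z}[z^{\pm1}]$ into unordered pairs $\{P,-P(z^{-1})\}$. Monomials $t^{Q}$ for distinct $Q$ are independent, so two paired terms share a monomial only if they come from the same pair.
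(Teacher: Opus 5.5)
The paper gives no proof of this statement. It is imported from \cite{FLV25}, and the refined index $\operatorname{Ind}_c^n(z)$ that enters $H$ is never defined in the text. Your skeleton is the standard one: it matches the proof of Theorem~\ref{thm-ccov}, where a crossing change moves only the summand of the changed chord and one then sums along a shortest sequence. Your last paragraph is correct and genuinely necessary. The $a_{n_m}$ are well defined only after fixing one representative for each unordered pair $\{P,-P(z^{-1})\}$ with $P\neq 0$. If both $P$ and $-P(z^{-1})\neq P$ were allowed as separate indices $m$, their elementary terms would coincide, so one could add $+1$ to one coefficient and $-1$ to the other without changing the sum, and $\sum_m|a_{n_m}|$ would not be determined. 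With that normalization, the coefficient of the nonconstant monomial $t^{P}$ determines $a_{n_m}$, and the triangle inequality then gives the bound.

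The gap is that Step~2, which is the whole mathematical content of the theorem, is assumed rather than proved (``I expect'', ``should''). For every $n$ you need two facts.
(i) A crossing change at $c$ leaves $\operatorname{Ind}_{c'}^n(z)$ unchanged for every chord $c'\neq c$.
(ii) It sends $\operatorname{Ind}_c^n(z)=P(z)$ to $-P(z^{-1})$.
Neither follows from the corresponding facts for $\Ind$ that the paper reads off Figure~\ref{fig9}.

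For (i): in $\Ind(c')$ the chord $c$ contributes only $\pm\operatorname{sgn}(c)$, and the change of side exactly cancels the change of sign. In a $z$-refinement, $c$ presumably carries a weight built from its own data, and that data changes under the crossing change (already $\Ind(c)\mapsto-\Ind(c)$). So (i) requires the side swap, the sign flip and the change of weight to cancel simultaneously. This holds for some definitions and fails for others. If it fails, one crossing change alters the summands of every chord meeting $c$, and $H_K-H_{K'}$ need no longer be a sum of at most $d_G(K,K')$ elementary terms at each level.

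For (ii): for $\Ind$ the sign reversal comes purely from exchanging $r(c)$ and $\ell(c)$, not from $\operatorname{sgn}(c)$. Likewise the inversion $z\mapsto z^{-1}$ has to come from the definition weighting right- and left-pointing chords by reciprocal powers of $z$. That must be read off the definition, not expected.

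To close the gap, state the definition of $\operatorname{Ind}_c^n(z)$ from \cite{FLV25} and verify (i) and (ii) directly on the Gauss diagram, by induction on $n$ if the definition is recursive. Step~1 can simply be cited from \cite{FLV25}. As written, its $\Omega_3$ check (``up to a compensating exchange'') is not an argument, since $\Omega_3$-invariance of a refined index does not follow from that of $\Ind$.
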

	
	\section{R-covering: proof of Theorem~\ref{thm-invc}.} \label{sec3}
	
	In this section, we define the $r$-covering of planar virtual knotoids and present the proof of Theorem~\ref{thm-invc}.
	
	Turaev~\cite{Tu04} first introduced the concept of the $r$-covering of flat virtual knots. In $2020$, Nakamura, Nakanishi and Satoh~\cite{NNS20} generalized $r$-coverings to virtual knots. We then extend the concept to planar virtual knotoids.
	
	\begin{definition}\label{D^{(r)}}{\rm
			Let $D$ be a diagram of a planar virtual knotoid $K$ and $r\in \mathbb Z_+$. Then the $r$-covering of $D$, denoted by $D^{(r)}$, is the diagram obtained from $D$ by virtualizing all crossings whose indices are not divisible by $r$, where virtualization is the operation that converts a classical crossing to a virtual crossing}. 
	\end{definition}
	
	\begin{remark}{\rm
			Let $G$ be a Gauss diagram of a planar virtual knotoid $K$, and $r \in \mathbb Z_+$. Then $G^{(r)}$, the Gauss diagram of $D^{(r)}$, is obtained from $G$ by removing all chords with indices are not dividible by $r$.}
	\end{remark}
	
	\begin{proof}[Proof of Theorem~\ref{thm-invc}.]
		Let $D$ be a diagram of a planar virtual knotoid $K$, and let $G$ denote its associated Gauss diagram. We shall prove that $D^{(r)}$ is an invariant of $K$ under moves in $\textit{g}\mathcal{R} = \{ \Omega_1, \Omega_2, \Omega_3, \Omega_1^v,$ $\Omega_2^v, \Omega_3^v, \Omega_3^m, \Omega_v \}$. Now suppose $D'$ is another diagram of $K$ with associated Gauss diagram $G'$. If $D$ and $D'$ differ by one move in $\textit{g}\mathcal{R}$, we show that $D^{(r)}$ and $D'^{(r)}$ belong to the same equivalence class.
		
		These moves in $\textit{g}\mathcal{R}$ leave the indices of all crossings unaffected except those involved in the move itself. Consequently, for every crossing not altered by the move in diagrams $D$ and $D'$, the virtualization induced by the $r$-covering is identical. 
		
		\noindent\textbf{Case (a).}
		Suppose that diagrams $D$ and $D'$ differ by an $\Omega_1$-move. Then their Gauss diagrams $G$ and $G'$ differ by exactly one chord having index $0$ as shown in Figure~\ref{fig25}. Thus, $D$ and $D'$ differ by exactly one crossing having index $0$. Since the $r$-covering does not virtualize $0$-index crossings, $D^{(r)}$ and $D'^{(r)}$ differ only by a single crossing corresponding the $\Omega_1$-move and so belong to the same equivalence class.
		
		\noindent\textbf{Case (b).}
		Suppose that diagrams $D$ and $D'$ differ by an $\Omega_2$-move. Then their Gauss diagrams $G$ and $G'$ differ by two chords with indices as shown in Figure~\ref{fig29}, where indices are equal. Thus, $D$ and $D'$ differ by two crossings whose indices are equal. Therefore, both  corresponding crossings will be preserved or removed under the $r$-covering. Hence $D^{(r)}$ and  $D'^{(r)}$ differ at most by these two  crossings. As a result, $D^{(r)}$ and $D'^{(r)}$ differ by at most one $\Omega_2$-move and so belong to the same equivalence class.
		
		\noindent\textbf{Case (c).}
		Suppose that diagrams $D$ and $D'$ differ by an $\Omega_3$-move. Then their diagrams $G$ and $G'$ are differ by three chords with indices as shown in Figure~\ref{fig30}.  According to Figure~\ref{fig30} (a), let $x$, $y$, $z$ denote the three chords in $G$, and let $x'$, $y'$, $z'$ be their corresponding chords in $G'$. We assume that $\operatorname{Ind}(x)=\operatorname{Ind}(x')=i$, $\operatorname{Ind}(y)=\operatorname{Ind}(y')=j$, $\operatorname{Ind}(z)=\operatorname{Ind}(z')=k$. Analogously to the proof of Lemma 4.1 in~\cite{Ch17}, it is not difficult to show that $i=j+k$. The same argument holds for Figure~\ref{fig30} (b), (c), and (d).  Hence the three crossings distinguishing $D$ from $D'$ via this $\Omega_3$-move satisfy the same relation that the index of one crossing equals the sum of the indices of the other two crossings. Consequently, there are three possible cases: all three indices are divisible by $r$, all three are not divisible by $r$, or exactly two indices are not divisible by $r$ while the remaining one is divisible by $r$. The case where one index is not divisible by $r$ and the other two are divisible by $r$ never occurs. Under the $r$-covering, the three  crossings are either all retained, all removed, or only two of them are removed. If all three crossings are kept, $D^{(r)}$ and $D'^{(r)}$ differ by exactly one $\Omega_3$-move. If all three crossings or just two of them are removed,  $D^{(r)}$ is equivalent with $D'^{(r)}$. Therefore, $D^{(r)}$ and $D'^{(r)}$ differ by at most one $\Omega_3$-move and so belong to the same equivalence class.
		
		\noindent\textbf{Case (d).}
		Suppose that $D$ and $D'$ differ by one of the moves $\Omega_1^v, \Omega_2^v, \Omega_3^v, \Omega_3^m$ and $\ \Omega_v$, Virtual crossings are disregarded in Gauss diagrams and each of these moves involves at most one classical crossing. Such moves preserve the indices of all chords in Gauss diagrams and thus leave the indices of all crossings in diagrams unchanged. Since the $r$-covering depends on indices, $D^{(r)}$ and $D'^{(r)}$ differ by at most one of the moves $\Omega_1^v, \Omega_2^v, \Omega_3^v, \Omega_3^m$ and $\Omega_v$, and so belong to the same equivalence class.
		
		In summary, $D^{(r)}$ is invariant under $\textit{g}\mathcal{R}$. While $[D^{(r)}]$ denotes the equivalence class of the diagram $D^{(r)}$ modulo $\textit{g}\mathcal{R}$ and planar isotopies, $[D^{(r)}]$ is therefore well-defined. Hence $[D^{(r)}]$ is an invariant of $K$. This complete the proof of Theorem~\ref{thm-invc}. \end{proof}
	
	We denote $K^{(r)} = [D^{(r)}]$. Thus, $K^{(r)}$ is an invariant of $K$. 
	
	\section{Gordian distance: proof of Theorem~\ref{thm-ccov}} \label{sec4}
	
	In this section, we commence with the proof of Theorem~\ref{thm-ccov}. Based on this theorem, we then give two rules for constructing planar virtual knotoids, by which one can construct many pairs of non-homotopic planar virtual knotoids.
	
	\begin{proof}[Proof of Theorem~\ref{thm-ccov}.]
		Since $K$ and $K'$ are homotopic,  set $d_G(K, K') = n$. There exists a sequence of planar virtual knotoids $K_0, \dots, K_n$ satisfying $K_0=K$, $K_n=K'$, where a diagram of $K_{i-1}$ differs from a diagram of $K_i$ by a single crossing change ($i = 1 , \dots , n$). Let $D_{i-1}$, $D_i$ denote the diagrams of $K_{i-1}$, $K_i$ and let $G_{i-1}$, ${G_i}$ be their corresponding Gauss diagrams, respectively, such that the crossing change at a  crossing $c$ for $D_{i-1}$ converts it into $D_{i}$. We denote the crossing of $D_{i}$ corresponding to $c$ by $c'$. We observe from the crossing change on a Gauss diagram illustrated in Figure~\ref{fig9} that this crossing change only flips the sign of the index for the affected chord, leaving the indices of all other chords unchanged. Therefore, when we proceed to apply the crossing change at $c$, the indices of all crossings of  $D_{i-1}$ remain unchanged with the exception of $c$, and $\Ind(c') = -\Ind(c)$. Thus, all crossings of  $D_{i-1}^{(r)}$ and $D_{i}^{(r)}$ coincide with the exception of the crossings corresponding to $c$ and $c'$. We then distinguish two cases for the relationship between $\Ind(c)$ and $r$.
		
		\noindent\textbf{Case (a).}
		The $\Ind(c)$ is divisible by $r$. The $r$-covering preserves the crossing $c$ in $D_{i-1}$ and the  crossing $c'$ in $D_{i}$. Then, a crossing change from  $D_{i-1}$ to $D_{i}$ corresponds to a crossing change from $D_{i-1}^{(r)}$ to $D_{i}^{(r)}$. Therefore, $D_{i-1}^{(r)}$ and $D_i^{(r)}$ differ by a crossing change. 
		
		\noindent\textbf{Case (b).}
		The $\Ind(c)$ is not divisible by $r$. The $r$-covering virtualizes the crossing $c$ in $D_{i-1}$ and the crossing $c'$ in $D_{i}$. Hence, $D_{i-1}^{(r)}$ coincides with $D_i^{(r)}$.
		
		Thus, we obtain a sequence of planar virtual knotoid diagrams $D_0^{(r)}, \dots, D_n^{(r)}$ satisfying $D_0^{(r)}=D^{(r)}$, $D_n^{(r)}=D'^{(r)}$, where $D_{i-1}^{(r)}$ and $D_i^{(r)}$ differ by a crossing change or are identical ($i = 1 , \dots , n$). Therefore, $K^{(r)}$ is homotopic to $K'^{(r)}$. Moreover, $d_G(K, K') \geq d_G(K^{(r)}, K'^{(r)})$.
	\end{proof}
	
	Theorem~\ref{thm-ccov} enables us to further check the homotopy of two planar virtual knotoids. We now present  three constructions based on Theorem~\ref{thm-ccov}, which allow us to construct pairs of non-homotopic planar virtual knotoids.
	
	\begin{construction}\label{4.1}
		{\rm We define a family $\mathcal{U}$ of planar virtual knotoids. A planar virtual knotoid $K$ belongs to $\mathcal{U}$ if it admits a diagram $D$ whose Gauss diagram $G(D)$ satisfies the following conditions. 
			\begin{enumerate}
				\item[{(1)}]
				For some $n\in\mathbb{N}$, the Gauss diagram $G(D)$ contains chords $c_1,c_2,\dots,c_{2n+1}$. When $n=0$, $G(D)$ has only one chord, and $D$ represents $\chi_1$ or $\chi_2$ in Figure~\ref{fig6}; 
				\item[{(2)}]
				As one travels along $G(D)$ from the tail to the head, the chord endpoints occur in the order $c_1,c_2,\dots,c_{2n+1},c_2,\dots,c_{2n+1},c_1$; 
				\item[{(3)}]
				If $n\geq1$, then, as one travels along $G(D)$ from the tail to the head, consider the first endpoint encountered for each chord in $\{c_2,\dots,c_{2n+1}\}$. There exists $\varepsilon\in\{+1,-1\}$ such that the first endpoints of $c_2,c_4,\dots,c_{2n}$ have sign $\varepsilon$, whereas the first endpoints of $c_3,c_5,\dots,c_{2n+1}$ have sign $-\varepsilon$ (see Figure~\ref{fig11} (b));
				\item[{(4)}]
				Virtualizing every crossing of $D$ except the crossing corresponding to $c_1$ produces a diagram equivalent to the diagram of $\chi_1$ or $\chi_2$ in Figure~\ref{fig6}. 
			\end{enumerate}
		}
	\end{construction}
	
	Figure~\ref{fig11} shows the diagram and Gauss diagram of a knotoid $u$. It is straightforward to verify that these diagrams satisfy the conditions specified in Construction~\ref{4.1}, which implies that $u\in\mathcal U$. 
	\begin{figure}[htbp]
		\begin{center}
			\begin{subfigure}{0.34\textwidth}
				\centering
				\tikzset{every picture/.style={line width=0.75pt}}         
				\begin{tikzpicture}[x=0.75pt,y=0.75pt,yscale=-1,xscale=1]
					\draw [black, very thick]    (127.87,106.37) .. controls (132.53,117.7) and (116.17,126.73) .. (101.26,122.69) .. controls (86.34,118.65) and (80.53,106.74) .. (87.53,91.07) .. controls (94.53,75.4) and (116.53,64.03) .. (144.53,61.7) .. controls (172.53,59.37) and (208.2,74.7) .. (209.53,117.37) .. controls (210.87,160.03) and (203.87,187.7) .. (177.87,192.03) .. controls (151.87,196.37) and (126.53,187.03) .. (127.87,169.7) .. controls (129.2,152.37) and (145.87,149.37) .. (159.2,157.03) ;
					\draw [black, very thick]    (106.13,115.2) .. controls (113.2,99.37) and (127.87,92.03) .. (144.87,94.7) ;
					\draw [black, very thick]    (97.87,90.03) .. controls (104.87,82.03) and (114.53,80.37) .. (119.53,90.7) ;
					\draw [black, very thick]    (179.87,98.03) .. controls (185.87,88.03) and (177.25,61.8) .. (162.06,81.58) .. controls (146.87,101.37) and (134.63,161.49) .. (147.59,170.48) .. controls (160.55,179.46) and (180.53,143.7) .. (179.2,135.7) ;
					\draw [black, very thick]    (163.87,99.37) .. controls (175.6,102.2) and (182.2,112.03) .. (181.53,118.03) ;
					\draw [black, very thick]    (172.47,113.2) .. controls (165.53,131.37) and (191.2,123.37) .. (198.2,137.03) .. controls (205.2,150.7) and (188.2,172.03) .. (174.2,165.37) ;
					\draw [black, very thick]    (150.06,120.08) -- (145.13,126.72) -- (143.24,118.45) ;
					\draw  [fill={rgb, 255:red, 0; green, 0; blue, 0 }  ,fill opacity=1 ][black, very thick]  (101.61,115.2) .. controls (101.61,112.7) and (103.64,110.68) .. (106.13,110.68) .. controls (108.63,110.68) and (110.65,112.7) .. (110.65,115.2) .. controls (110.65,117.7) and (108.63,119.72) .. (106.13,119.72) .. controls (103.64,119.72) and (101.61,117.7) .. (101.61,115.2) -- cycle ;
					\draw  [fill={rgb, 255:red, 0; green, 0; blue, 0 }  ,fill opacity=1 ][black, very thick]  (93.35,90.03) .. controls (93.35,87.54) and (95.37,85.51) .. (97.87,85.51) .. controls (100.36,85.51) and (102.39,87.54) .. (102.39,90.03) .. controls (102.39,92.53) and (100.36,94.55) .. (97.87,94.55) .. controls (95.37,94.55) and (93.35,92.53) .. (93.35,90.03) -- cycle ;
					\draw  [black, very thick]  (137.69,153.77) .. controls (137.69,151.27) and (139.71,149.25) .. (142.21,149.25) .. controls (144.71,149.25) and (146.73,151.27) .. (146.73,153.77) .. controls (146.73,156.27) and (144.71,158.29) .. (142.21,158.29) .. controls (139.71,158.29) and (137.69,156.27) .. (137.69,153.77) -- cycle ;
					\draw (150,200) node [anchor=north west][inner sep=0.75pt]  {$(a)$};		
				\end{tikzpicture}
			\end{subfigure}
			\qquad 
			\begin{subfigure}{0.34\textwidth}
				\centering
				\tikzset{every picture/.style={line width=0.75pt}}         
				\begin{tikzpicture}[x=0.75pt,y=0.75pt,yscale=-1,xscale=1]
					\draw  [draw opacity=0][black, very thick]  (226.44,355.99) .. controls (216.68,379.99) and (193.45,396.88) .. (166.34,396.88) .. controls (130.44,396.88) and (101.34,367.26) .. (101.34,330.73) .. controls (101.34,294.19) and (130.44,264.58) .. (166.34,264.58) .. controls (193.63,264.58) and (216.98,281.68) .. (226.62,305.93) -- (166.34,330.73) -- cycle ; 
					\draw  [black, very thick]  (226.44,355.99) .. controls (216.68,379.99) and (193.45,396.88) .. (166.34,396.88) .. controls (130.44,396.88) and (101.34,367.26) .. (101.34,330.73) .. controls (101.34,294.19) and (130.44,264.58) .. (166.34,264.58) .. controls (193.63,264.58) and (216.98,281.68) .. (226.62,305.93) ;  
					\draw  [fill={rgb, 255:red, 0; green, 0; blue, 0 }  ,fill opacity=1 ][black, very thick]  (221.92,355.99) .. controls (221.92,353.49) and (223.94,351.47) .. (226.44,351.47) .. controls (228.93,351.47) and (230.96,353.49) .. (230.96,355.99) .. controls (230.96,358.48) and (228.93,360.51) .. (226.44,360.51) .. controls (223.94,360.51) and (221.92,358.48) .. (221.92,355.99) -- cycle ;
					\draw  [fill={rgb, 255:red, 0; green, 0; blue, 0 }  ,fill opacity=1 ][black, very thick]  (222.1,305.93) .. controls (222.1,303.43) and (224.13,301.41) .. (226.62,301.41) .. controls (229.12,301.41) and (231.14,303.43) .. (231.14,305.93) .. controls (231.14,308.43) and (229.12,310.45) .. (226.62,310.45) .. controls (224.13,310.45) and (222.1,308.43) .. (222.1,305.93) -- cycle ;
					\draw  [red, very thick, ->]  (200.56,386.78) -- (214.32,286.12) ;
					\draw  [black, very thick, ->]  (102.97,343.51) -- (192.69,270.66) ;
					\draw  [black, very thick, ->]  (160.69,264.94) -- (113.83,369.23) ;
					\draw  [black, very thick, ->]  (139.83,391.23) -- (126.3,279.2) ;
					\draw  [black, very thick, ->]  (106.97,303.8) -- (172.11,396.09) ;
					\draw  [black, very thick]  (104.86,325.54) -- (101.53,333.12) -- (97.85,325.47) ;
					\draw (192,249) node [anchor=north west][inner sep=0.75pt]  [font=\large]  {$c_{2}$};
					\draw (95,378) node [anchor=north west][inner sep=0.75pt]  [font=\large]  {$c_{3}$};
					\draw (113,253) node [anchor=north west][inner sep=0.75pt]  [font=\large]  {$c_{4}$};
					\draw (154.85,251.5) node [anchor=north west][inner sep=0.75pt]  [font=\scriptsize]  {$-$};
					\draw (86.51,341.1) node [anchor=north west][inner sep=0.75pt]  [font=\scriptsize]  {$-$};
					\draw (133.31,397.23) node [anchor=north west][inner sep=0.75pt]  [font=\scriptsize]  {$-$};
					\draw (88.5,296.1) node [anchor=north west][inner sep=0.75pt]  [font=\scriptsize]  {$-$};
					\draw (170,408) node [anchor=north west][inner sep=0.75pt]  [font=\large]  {$c_{5}$};
					\draw (212,265) node [anchor=north west][inner sep=0.75pt]  [font=\large]  {$c_{1}$};
					\draw (71,338) node [anchor=north west][inner sep=0.75pt]  [font=\large]  {$c_{2}$};
					\draw (152,240) node [anchor=north west][inner sep=0.75pt]  [font=\large]  {$c_{3}$};
					\draw (130,403) node [anchor=north west][inner sep=0.75pt]  [font=\large]  {$c_{4}$};
					\draw (73,292) node [anchor=north west][inner sep=0.75pt]  [font=\large]  {$c_{5}$};
					\draw (192,400) node [anchor=north west][inner sep=0.75pt]  [font=\large]  {$c_{1}$};
					\draw (195.86,392.6) node [anchor=north west][inner sep=0.75pt]  [font=\scriptsize]  {$+$};
					\draw (211,275) node [anchor=north west][inner sep=0.75pt]  [font=\scriptsize]  {$-$};
					\draw (193,260) node [anchor=north west][inner sep=0.75pt]  [font=\scriptsize]  {$+$};
					\draw (104,372) node [anchor=north west][inner sep=0.75pt]  [font=\scriptsize]  {$+$};
					\draw (120,265) node [anchor=north west][inner sep=0.75pt]  [font=\scriptsize]  {$+$};
					\draw (170,400) node [anchor=north west][inner sep=0.75pt]  [font=\scriptsize]  {$+$};
					\draw (160,422) node [anchor=north west][inner sep=0.75pt]    {(b)};
				\end{tikzpicture}					
			\end{subfigure}
			\caption{Diagram and Gauss diagram of planar virtual knotoid $u \in \mathcal U$.}
			\label{fig11}
		\end{center}
	\end{figure}
	
	\begin{construction}\label{4.2} {\rm 
			We define a family $\mathcal{V}$ of planar virtual knotoids. A planar virtual knotoid $K$ belongs to $\mathcal{V}$ if it admits a diagram $D$ whose Gauss diagram $G(D)$ satisfies the following conditions.
			\begin{enumerate}
				\item[{(1)}] For some $n\in\mathbb{N}$, the Gauss diagram $G(D)$ contains chords $c_1,c_2,\dots,c_{2n+1}$. When $n=0$, $G(D)$ has only one chord, and $D$ represents $\chi_1$ or $\chi_2$ in Figure~\ref{fig6}; 
				\item[{(2)}] As one travels along $G(D)$ from the tail to the head, the chord endpoints occur in the order $c_1,c_2,\dots,c_{2n+1},c_1,\dots,c_{2n+1}$; 
				\item[{(3)}]
				If $n\geq1$, then, as one travels along $G(D)$ from the tail to the head, consider the first endpoint encountered for each chord in $\{c_1,\dots,c_{2n+1}\}$. There exists $\varepsilon\in\{+1,-1\}$ such that the first endpoints of $c_1,c_3,\dots,c_{2n-1}$ have sign $\varepsilon$, whereas the first endpoints of $c_2,c_4,\dots,c_{2n}$ and the first endpoint of $c_{2n+1}$ have sign $-\varepsilon$ (see Figure~\ref{fig27} (b));
				\item[{(4)}]
				Virtualizing every crossing of $D$ except the crossing corresponding to $c_{2n+1}$ produces a diagram equivalent to the diagram of $\chi_1$ or $\chi_2$ in Figure~\ref{fig6}. 
			\end{enumerate}
		}
	\end{construction}
	
	Depicted in Figure~\ref{fig27} are the diagram and Gauss diagram associated with knotoid $v$. The conditions from Construction~\ref{4.2} are clearly fulfilled, so $v$ is an element of $\mathcal V$.
	\begin{figure}[htbp]
		\begin{center}
			\begin{subfigure}{0.34\textwidth}
				\centering
				\tikzset{every picture/.style={line width=0.75pt}}
				\begin{tikzpicture}[x=0.75pt,y=0.75pt,yscale=-1,xscale=1]
					\draw [black, very thick]    (78.44,103.41) .. controls (84.81,87.35) and (89.96,82.43) .. (107.12,80.29) ;
					\draw [black, very thick]    (121.08,78.39) .. controls (152.93,73.88) and (171.55,73.37) .. (192.37,88.13) ;
					\draw [black, very thick]    (148.76,170.23) .. controls (223.24,171.52) and (239.66,123.09) .. (206.34,99.24) ;
					\draw [black, very thick]    (158.49,64.47) .. controls (158.22,59.22) and (131.99,32.36) .. (120.18,55.43) .. controls (108.37,78.51) and (101.99,170.52) .. (126.47,170.36) ;
					\draw [black, very thick]    (157.56,88.74) .. controls (155.72,97.61) and (162.09,117.03) .. (198.11,93.46) .. controls (234.12,69.9) and (268.52,99.72) .. (233.19,116.33) ;
					\draw [black, very thick]    (136.85,147.85) .. controls (124.6,208.97) and (225.79,205.35) .. (191.74,164.69) .. controls (157.68,124.03) and (185.12,125.58) .. (191.49,126.36) .. controls (197.86,127.13) and (206.68,128.69) .. (209.87,126.98) ;
					\draw [black, very thick]    (163.06,98.99) -- (168.77,104.98) -- (160.3,105.44) ;
					\draw  [fill={rgb, 255:red, 0; green, 0; blue, 0 }  ,fill opacity=1 ][black, very thick]  (132.33,147.85) .. controls (132.33,145.36) and (134.36,143.33) .. (136.85,143.33) .. controls (139.35,143.33) and (141.37,145.36) .. (141.37,147.85) .. controls (141.37,150.35) and (139.35,152.37) .. (136.85,152.37) .. controls (134.36,152.37) and (132.33,150.35) .. (132.33,147.85) -- cycle ;
					\draw  [black, very thick]  (186.6,163.94) .. controls (186.6,161.44) and (188.62,159.42) .. (191.12,159.42) .. controls (193.62,159.42) and (195.64,161.44) .. (195.64,163.94) .. controls (195.64,166.43) and (193.62,168.46) .. (191.12,168.46) .. controls (188.62,168.46) and (186.6,166.43) .. (186.6,163.94) -- cycle ;
					\draw  [fill={rgb, 255:red, 0; green, 0; blue, 0 }  ,fill opacity=1 ][black, very thick]  (73.92,103.41) .. controls (73.92,100.91) and (75.95,98.89) .. (78.44,98.89) .. controls (80.94,98.89) and (82.96,100.91) .. (82.96,103.41) .. controls (82.96,105.91) and (80.94,107.93) .. (78.44,107.93) .. controls (75.95,107.93) and (73.92,105.91) .. (73.92,103.41) -- cycle ;
					\draw (152,211) node [anchor=north west][inner sep=0.75pt]  {$(a)$};
				\end{tikzpicture}
			\end{subfigure}
			\qquad 
			\begin{subfigure}{0.34\textwidth}
				\centering
				\tikzset{every picture/.style={line width=0.75pt}} 
				\begin{tikzpicture}[x=0.75pt,y=0.75pt,yscale=-1,xscale=1]
					\draw  [draw opacity=0][black, very thick]  (217.74,354.36) .. controls (207.98,378.36) and (184.75,395.25) .. (157.64,395.25) .. controls (121.74,395.25) and (92.64,365.64) .. (92.64,329.1) .. controls (92.64,292.57) and (121.74,262.95) .. (157.64,262.95) .. controls (184.93,262.95) and (208.28,280.06) .. (217.92,304.31) -- (157.64,329.1) -- cycle ; 
					\draw  [black, very thick]  (217.74,354.36) .. controls (207.98,378.36) and (184.75,395.25) .. (157.64,395.25) .. controls (121.74,395.25) and (92.64,365.64) .. (92.64,329.1) .. controls (92.64,292.57) and (121.74,262.95) .. (157.64,262.95) .. controls (184.93,262.95) and (208.28,280.06) .. (217.92,304.31) ;
					\draw  [fill={rgb, 255:red, 0; green, 0; blue, 0 }  ,fill opacity=1 ][black, very thick]  (213.4,304.31) .. controls (213.4,301.81) and (215.43,299.79) .. (217.92,299.79) .. controls (220.42,299.79) and (222.44,301.81) .. (222.44,304.31) .. controls (222.44,306.8) and (220.42,308.83) .. (217.92,308.83) .. controls (215.43,308.83) and (213.4,306.8) .. (213.4,304.31) -- cycle ; 
					\draw  [fill={rgb, 255:red, 0; green, 0; blue, 0 }  ,fill opacity=1 ][black, very thick]  (213.22,354.36) .. controls (213.22,351.87) and (215.24,349.84) .. (217.74,349.84) .. controls (220.23,349.84) and (222.26,351.87) .. (222.26,354.36) .. controls (222.26,356.86) and (220.23,358.88) .. (217.74,358.88) .. controls (215.24,358.88) and (213.22,356.86) .. (213.22,354.36) -- cycle ;
					\draw  [red, very thick, ->]  (208.15,370.64) -- (94.57,313.05) ;
					\draw  [black, very thick, ->]  (94.42,343.75) -- (213.99,296.24) ;
					\draw  [black, very thick, ->]  (140.4,392.96) -- (140.8,265.61) ;
					\draw  [black, very thick, ->]  (109.27,285.31) -- (193.27,384.32) ;
					\draw  [black, very thick, ->]  (190.71,271.97) -- (119.27,382.52) ;
					\draw  [black, very thick]  (95.97,329.13) -- (93.2,336.93) -- (88.97,329.57) ;
					\draw (218,276) node [anchor=north west][inner sep=0.75pt]  [font=\large]  {$c_{1}$};
					\draw (102,392) node [anchor=north west][inner sep=0.75pt]  [font=\large]  {$c_{2}$};
					\draw (130,241) node [anchor=north west][inner sep=0.75pt]  [font=\large]  {$c_{3}$};
					\draw (197,392) node [anchor=north west][inner sep=0.75pt]  [font=\large]  {$c_{4}$};		
					\draw (66,338) node [anchor=north west][inner sep=0.75pt]  [font=\large]  {$c_{1}$};
					\draw (190,251) node [anchor=north west][inner sep=0.75pt]  [font=\large]  {$c_{2}$};
					\draw (130,405) node [anchor=north west][inner sep=0.75pt]  [font=\large]  {$c_{3}$};
					\draw (84,264) node [anchor=north west][inner sep=0.75pt]  [font=\large]  {$c_{4}$};
					\draw (221,373) node [anchor=north west][inner sep=0.75pt]  [font=\large]  {$c_{5}$};
					\draw (190,261) node [anchor=north west][inner sep=0.75pt]  [font=\scriptsize]  {$-$};
					\draw (80,341) node [anchor=north west][inner sep=0.75pt]  [font=\scriptsize]  {$-$};
					\draw (212.48,371.14) node [anchor=north west][inner sep=0.75pt]  [font=\scriptsize]  {$+$};
					\draw (136,398) node [anchor=north west][inner sep=0.75pt]  [font=\scriptsize]  {$-$};
					\draw (97,275) node [anchor=north west][inner sep=0.75pt]  [font=\scriptsize]  {$-$};
					\draw (67,301) node [anchor=north west][inner sep=0.75pt]  [font=\large]  {$c_{5}$};
					\draw (82,306) node [anchor=north west][inner sep=0.75pt]  [font=\scriptsize]  {$-$};
					\draw (216,288) node [anchor=north west][inner sep=0.75pt]  [font=\scriptsize]  {$+$};
					\draw (110,385) node [anchor=north west][inner sep=0.75pt]  [font=\scriptsize]  {$+$};
					\draw (135,254) node [anchor=north west][inner sep=0.75pt]  [font=\scriptsize]  {$+$};
					\draw (192,386) node [anchor=north west][inner sep=0.75pt]  [font=\scriptsize]  {$+$};
					\draw (145,415) node [anchor=north west][inner sep=0.75pt]    {(b)};
				\end{tikzpicture}
			\end{subfigure}		
			\caption{Diagram and Gauss diagram of planar virtual knotoid $v \in \mathcal V$.}
			\label{fig27}
		\end{center}
	\end{figure}
	
	\begin{construction}\label{4.3}\
		{\rm 
			We define a family $\mathcal{W}$ of planar virtual knotoids. A planar virtual knotoid $K$ belongs to $\mathcal{W}$ if it admits a diagram $D$ whose Gauss diagram $G(D)$ satisfies the following conditions.
			\begin{enumerate}
				\item[{(1)}]
				For some $n\in\mathbb{N}$, the Gauss diagram $G(D)$ contains chords $c_1,c_2,\dots,c_{2n}$. When $n=0$, $G(D)$ is chord-free and $D$ represents the trivial knotoid;
				\item[{(2)}]
				If $n\geq1$, then, as one travels along $G(D)$ from the tail to the head, the chord endpoints occur in the order $c_1,c_2,\dots,c_{2n},c_1,\dots,c_{2n}$; 
				\item[{(3)}]
				If $n\geq1$, then, as one travels along $G(D)$ from the tail to the head, consider the first endpoint encountered for each chord in $\{c_1,\dots,c_{2n}\}$. There exists $\varepsilon\in\{+1,-1\}$ such that the first endpoints of $c_1,c_3,\dots,c_{2n-1}$ have sign $\varepsilon$, whereas the first endpoints of $c_2,c_4,\dots,c_{2n}$ have sign $-\varepsilon$ (see Figure~\ref{fig26} (b)).
			\end{enumerate}
		}
	\end{construction}
	
	Figure~\ref{fig26} shows the diagram and Gauss diagram of a knotoid $w$. One can easily check that the requirements of Construction~\ref{4.3} are satisfied, so $w\in\mathcal W$.
	\begin{figure}[htbp]
		\begin{center}
			\begin{subfigure}{0.34\textwidth}
				\centering
				\tikzset{every picture/.style={line width=0.75pt}}
				\begin{tikzpicture}[x=0.75pt,y=0.75pt,yscale=-0.8,xscale=0.8]
					\draw [black, very thick]    (444.65,383.44) .. controls (461.53,376.88) and (476.79,376.81) .. (491.03,383.38) ; 
					\draw [black, very thick]    (418.55,393.45) .. controls (374.26,414.31) and (418.91,482.04) .. (490.41,451.04) ;
					\draw [black, very thick]    (458.03,447.93) .. controls (444.69,430.93) and (438.37,408.51) .. (430.35,382.97) .. controls (422.33,357.43) and (401.33,348.77) .. (381,362.43) .. controls (360.67,376.1) and (359.08,409.61) .. (371.09,438.73) .. controls (383.09,467.85) and (413.33,485.1) .. (449.33,491.77) .. controls (485.33,498.43) and (538.6,488.8) .. (561.97,473.29) .. controls (585.33,457.77) and (595.98,444.54) .. (605.67,421.43) .. controls (615.36,398.33) and (615.77,359.97) .. (584.67,356.77) .. controls (553.56,353.56) and (543.67,380.1) .. (540.33,397.43) .. controls (537,414.77) and (529,425.77) .. (512,437.43) ; 
					\draw [black, very thick]    (472.03,468.59) .. controls (488.04,487.51) and (506.88,465.34) .. (501.3,439.12) .. controls (495.73,412.9) and (514.41,401.51) .. (527.41,399.26) ;
					\draw  [black, very thick]  (488.59,496.97) -- (480.73,493.78) -- (487.88,489.9) ;
					\draw  [fill={rgb, 255:red, 0; green, 0; blue, 0 }  ,fill opacity=1 ][black, very thick]  (581.48,392.67) .. controls (581.48,390.18) and (583.5,388.15) .. (586,388.15) .. controls (588.5,388.15) and (590.52,390.18) .. (590.52,392.67) .. controls (590.52,395.17) and (588.5,397.19) .. (586,397.19) .. controls (583.5,397.19) and (581.48,395.17) .. (581.48,392.67) -- cycle ; 
					\draw  [fill={rgb, 255:red, 0; green, 0; blue, 0 }  ,fill opacity=1 ][black, very thick]  (486.51,383.38) .. controls (486.51,380.89) and (488.54,378.86) .. (491.03,378.86) .. controls (493.53,378.86) and (495.55,380.89) .. (495.55,383.38) .. controls (495.55,385.88) and (493.53,387.9) .. (491.03,387.9) .. controls (488.54,387.9) and (486.51,385.88) .. (486.51,383.38) -- cycle ;
					\draw [black, very thick]    (552.88,393.26) .. controls (568,388.9) and (573.11,388.9) .. (586,392.67) ;
					\draw (472,540) node [anchor=north west][inner sep=0.75pt]    {(a)};
				\end{tikzpicture}
			\end{subfigure}
			\qquad \qquad 
			\begin{subfigure}{0.34\textwidth}
				\centering
				\tikzset{every picture/.style={line width=0.75pt}}
				\begin{tikzpicture}[x=0.75pt,y=0.75pt,yscale=-1,xscale=1]
					\draw  [draw opacity=0][black, very thick]  (428.24,354.13) .. controls (418.48,378.13) and (395.25,395.02) .. (368.14,395.02) .. controls (332.24,395.02) and (303.14,365.4) .. (303.14,328.87) .. controls (303.14,292.34) and (332.24,262.72) .. (368.14,262.72) .. controls (395.43,262.72) and (418.78,279.83) .. (428.42,304.07) -- (368.14,328.87) -- cycle ; \draw  [black, very thick]  (428.24,354.13) .. controls (418.48,378.13) and (395.25,395.02) .. (368.14,395.02) .. controls (332.24,395.02) and (303.14,365.4) .. (303.14,328.87) .. controls (303.14,292.34) and (332.24,262.72) .. (368.14,262.72) .. controls (395.43,262.72) and (418.78,279.83) .. (428.42,304.07) ;  
					\draw [black, very thick, <-]    (424.49,296) -- (304.1,344.08) ;
					\draw [black, very thick, <-]    (351.3,265.38) -- (350.77,391.68) ;
					\draw [black, very thick, ->]    (319.77,285.08) -- (404.1,384.41) ;
					\draw [black, very thick, <-]    (328.77,382.08) -- (401.21,271.74) ;
					\draw  [fill={rgb, 255:red, 0; green, 0; blue, 0 }  ,fill opacity=1 ][black, very thick]  (423.72,354.13) .. controls (423.72,351.63) and (425.74,349.61) .. (428.24,349.61) .. controls (430.73,349.61) and (432.76,351.63) .. (432.76,354.13) .. controls (432.76,356.63) and (430.73,358.65) .. (428.24,358.65) .. controls (425.74,358.65) and (423.72,356.63) .. (423.72,354.13) -- cycle ;
					\draw  [fill={rgb, 255:red, 0; green, 0; blue, 0 }  ,fill opacity=1 ][black, very thick]  (423.9,304.07) .. controls (423.9,301.58) and (425.93,299.55) .. (428.42,299.55) .. controls (430.92,299.55) and (432.94,301.58) .. (432.94,304.07) .. controls (432.94,306.57) and (430.92,308.59) .. (428.42,308.59) .. controls (425.93,308.59) and (423.9,306.57) .. (423.9,304.07) -- cycle ;
					\draw (433,282) node [anchor=north west][inner sep=0.75pt]  [font=\large]  {$c_{1}$};
					\draw (310,388) node [anchor=north west][inner sep=0.75pt]  [font=\large]  {$c_{2}$};
					\draw (343,241) node [anchor=north west][inner sep=0.75pt]  [font=\large]  {$c_{3}$};
					\draw (407,389) node [anchor=north west][inner sep=0.75pt]  [font=\large]  {$c_{4}$};
					\draw (275,340) node [anchor=north west][inner sep=0.75pt]  [font=\large]  {$c_{1}$};
					\draw (405,251) node [anchor=north west][inner sep=0.75pt]  [font=\large]  {$c_{2}$};
					\draw (343,400) node [anchor=north west][inner sep=0.75pt]  [font=\large]  {$c_{3}$};
					\draw (293,265) node [anchor=north west][inner sep=0.75pt]  [font=\large]  {$c_{4}$};
					\draw (402,261) node [anchor=north west][inner sep=0.75pt]  [font=\scriptsize]  {$-$};
					\draw (289.5,341) node [anchor=north west][inner sep=0.75pt]  [font=\scriptsize]  {$-$};
					\draw (346,394) node [anchor=north west][inner sep=0.75pt]  [font=\scriptsize]  {$-$};
					\draw (306.98,275) node [anchor=north west][inner sep=0.75pt]  [font=\scriptsize]  {$-$};
					\draw (425,288) node [anchor=north west][inner sep=0.75pt]  [font=\scriptsize]  {$+$};
					\draw (320,382) node [anchor=north west][inner sep=0.75pt]  [font=\scriptsize]  {$+$};
					\draw (346,253) node [anchor=north west][inner sep=0.75pt]  [font=\scriptsize]  {$+$};
					\draw (403,385) node [anchor=north west][inner sep=0.75pt]  [font=\scriptsize]  {$+$};
					\draw  [black, very thick]  (306.78,327.06) -- (303.47,334.65) -- (299.77,327.02) ;
					\draw (356,412) node [anchor=north west][inner sep=0.75pt]    {(b)};
				\end{tikzpicture}
			\end{subfigure}		
			\caption{Diagram and Gauss diagram of planar virtual knotoid $w \in \mathcal W$.}
			\label{fig26}
		\end{center}
	\end{figure}
	
	\begin{theorem}\label{thm-UVW}
		For any planar virtual knotoids $u\in\mathcal{U}$, $v\in\mathcal{V}$ and $w\in\mathcal{W}$, $u$ is not homotopic to $w$, and $v$ is not homotopic to $w$. 
	\end{theorem}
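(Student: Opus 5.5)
The plan is to apply Theorem~\ref{thm-ccov} with a single well-chosen $r$, chosen so that the $r$-covering sends every element of $\mathcal U$ and of $\mathcal V$ to $\chi_1$ or $\chi_2$ and every element of $\mathcal W$ to the trivial knotoid $O$. If $u$ were homotopic to $w$, Theorem~\ref{thm-ccov} would make $u^{(r)}\in\{\chi_1,\chi_2\}$ homotopic to $w^{(r)}=O$. This contradicts the fact, recalled from~\cite{FLV25} after Definition~\ref{def2.6}, that neither $\chi_1$ nor $\chi_2$ is homotopic to the trivial knotoid. The same argument handles $v$ and $w$. I would take $r=4$; any $r\ge 3$ should also work.

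\textbf{Step 1: the family $\mathcal W$.} In the chord order $c_1,\dots,c_{2n},c_1,\dots,c_{2n}$, any two chords interlace. Hence each chord $c_i$ is crossed by exactly $2n-1$ other chords. Each crossing chord contributes $\pm1$ to $\Ind(c_i)$, so $\Ind(c_i)\equiv 2n-1\equiv 1 \pmod 2$. Every index is therefore odd, hence not divisible by $4$. So $w^{(4)}$ has no classical crossings and represents $O$, which is also immediate when $n=0$.

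\textbf{Step 2: the family $\mathcal U$.} In the order $c_1,c_2,\dots,c_{2n+1},c_2,\dots,c_{2n+1},c_1$, the chord $c_1$ crosses no other chord, so $\Ind(c_1)=0$. Each $c_i$ with $i\ge2$ crosses exactly the other $2n-1$ chords among $c_2,\dots,c_{2n+1}$, so its index is odd. Thus $u^{(4)}$ is obtained by virtualizing every crossing except the one for $c_1$. By condition (4) of Construction~\ref{4.1}, $u^{(4)}$ is equivalent to $\chi_1$ or $\chi_2$.

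\textbf{Step 3: the family $\mathcal V$.} This step needs an actual computation, because all $2n+1$ chords mutually interlace and a parity argument gives only even indices. Order the chords as in condition (2) and let $s_j$ be the sign at the first endpoint of $c_j$. By condition (3), $s_j=\varepsilon$ for odd $j\le 2n-1$, and $s_j=-\varepsilon$ for even $j$ and for $j=2n+1$. For a fixed $c_k$, the chords $c_j$ with $j<k$ cross $c_k$ from one side and those with $j>k$ from the other. Hence $\Ind(c_k)=\pm\bigl(\sum_{j<k}s_j-\sum_{j>k}s_j\bigr)$.
\begin{itemize}
\item For $k=2n+1$ this is an alternating sum of $n$ copies of $\varepsilon$ and $n$ copies of $-\varepsilon$, so $\Ind(c_{2n+1})=0$.
\item For $k\le 2n$ odd, the first sum is $0$ and the second is $-\varepsilon+(-\varepsilon)=-2\varepsilon$.
\item For $k\le 2n$ even, the first sum is $\varepsilon$ and the second is $-\varepsilon$.
\end{itemize}
So $|\Ind(c_k)|=2$ for all $k\le 2n$, and none of these is divisible by $4$. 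Therefore $v^{(4)}$ keeps only $c_{2n+1}$, and by condition (4) of Construction~\ref{4.2} it is $\chi_1$ or $\chi_2$.

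\textbf{Main obstacle.} The main risk is Step~3. I must check carefully, using the orientation and sign conventions of Figure~\ref{fig33} and the pictures in Figure~\ref{fig27}, whether the signs in condition (3) sit at the tails of the chords and whether all chords run the same way. If some chords point backwards, the $\pm$ contributions change. In that case I would redo the bookkeeping and choose $r$ to divide $\Ind(c_{2n+1})$ but none of the other indices, checking that the same $r$ still kills all the odd indices coming from $\mathcal W$.

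A second, subtler point is that~\cite{FLV25} proves $\chi_i\not\simeq O$ for planar knotoids, whereas here the homotopy is allowed to pass through virtual diagrams. If this needs justification, I would use the invariant $H$ of Theorem~\ref{thm-H} extended to planar virtual knotoids, or an argument comparing $\chi_1$ and $\chi_2$ with $O$ through the endpoint structure of the Gauss diagram.
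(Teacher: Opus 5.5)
\textbf{Where your argument agrees with the paper.} Up to the choice of $r$, your argument is the paper's. The paper uses the $2$-covering for $u$ versus $w$ and the $3$-covering for $v$ versus $w$, with the same index values you obtain. You need only parity for $\mathcal U$ and $\mathcal W$, and your uniform $r=4$ is a harmless variant. Your orientation worry in Step~3 also goes away. In Figures~\ref{fig11}, \ref{fig27} and~\ref{fig26} both endpoints of every chord are marked, with opposite signs and $\operatorname{sgn}(c)$ at the initial point. So the mark $s_j$ at the first endpoint of $c_j$ is exactly its oriented contribution, and $\Ind(c_k)=\pm\bigl(\sum_{j<k}s_j-\sum_{j>k}s_j\bigr)$ holds for any chord directions. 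It reproduces $\Ind(c_{2n+1})=0$ and $|\Ind(c_k)|=2$.

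\textbf{The gap.} The gap is the step you defer as a `subtler point', and it cannot be repaired as you suggest. Once $\Omega_v$ is an allowed move, $\chi_1$ and $\chi_2$ \emph{are} equivalent to $O$ as planar virtual knotoids.
\begin{itemize}
\item The invariant $H$ cannot separate them: the paper itself records $H_{\chi_1}=0=H_O$.
\item Your Step~1 rests on the principle that a diagram without classical crossings represents $O$. That principle is really the fact that, through detour moves and $\Omega_v$, a planar virtual knotoid is determined by its Gauss diagram. The Gauss diagram of $\chi_1$ is a single chord with adjacent endpoints, which is exactly the $\Omega_1$ configuration.
\item Concretely, the loop of $\chi_1$ carries no crossing other than its base crossing, so it can be swept to the other side of that crossing. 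A generic sweep decomposes into $\Omega_1^v,\Omega_2^v,\Omega_3^v$, plus $\Omega_v$ when it passes the two endpoints and $\Omega_3^m$ when it passes the base crossing. It ends in an empty kink, which $\Omega_1$ removes.
\end{itemize}
So the $4$-covering yields no contradiction. The result of \cite{FLV25} concerns classical planar knotoids, whose endpoints may never cross a strand, and it does not transfer to the virtual setting. The paper's proof ends with the same unjustified step, so it offers no way around this. In fact, $\chi_1\sim O$ together with Corollaries~\ref{cor1} and~\ref{cor4} makes the specific $u$ and $w$ of Figures~\ref{fig11} and~\ref{fig26} homotopic. An argument of this shape needs a framework in which endpoints cannot pass strands even virtually (no $\Omega_v$), together with an invariant there that separates $\chi_i$ from $O$.
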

	
	\begin{proof}
		We first prove that $u$ is non-homotopic to $w$. Let $D$ denote a diagram of $u$, and let $G(D)$ be its associated Gauss diagram. By conditions~(2) and~(3) of Construction~\ref{4.1} and the definition of the chord index, the first chord $c_1$ in $G(D)$ has chord index of $0$, while every remaining chord possesses a chord index whose absolute value equals $1$. For $w$, by contrast, every chord in its Gauss diagram has chord index of absolute value $1$. These indices follow in the same way from conditions~(2) and~(3) of Construction~\ref{4.3}. When we take the $2$-covering of $u$ in $\mathcal{U}$, the resulting diagram $D^{(2)}$ is  obtained by virtualizing all crossings except the one corresponding to $c_1$, and hence, by condition~(4) of Construction~\ref{4.1}, equivalent to the diagram of $\chi_1$ or $\chi_2$ in Figure~\ref{fig6}. In contrast, applying the $2$-covering to $w$ in $\mathcal{W}$ results in the trivial knotoid.
		Since both $\chi_1$ and $\chi_2$ are not homotopic to the trivial knotoid, it follows from Theorem~\ref{thm-ccov} that $u$ is not homotopic to~$w$.
		
		We next prove that $v$ is not homotopic to $w$. Let $D'$ be a diagram of $v$, and let $G(D')$ denote its corresponding Gauss diagram. By conditions~(2) and~(3) of Construction~\ref{4.2} and the definition of the chord index, the $(2n+1)$-th chord $c_{2n+1}$ in $G(D')$ carries an index of $0$, whereas the index of every other chord has an absolute value equal to $2$. The $3$-covering of $v$ produces a diagram $D'^{(3)}$ obtained by virtualizing all crossings except the one corresponding to $c_{2n+1}$, and hence, by condition~(4) of Construction~\ref{4.2}, equivalent to one of the diagrams of $\chi_1$ and $\chi_2$  in Figure~\ref{fig6}. Meanwhile, applying the $3$-covering to $w$ yields the trivial knotoid. Since $\chi_1$ and $\chi_2$ are both not homotopic to the trivial knotoid, it follows from Theorem~\ref{thm-ccov} that $v$ is not homotopic to $w$.
	\end{proof}
	
	The following corollaries provide explicit non-equivalent planar virtual knotoids within the same homotopy class. 
	\begin{corollary}\label{cor1}
		Each of the families $\mathcal{U}$ and $\mathcal{V}$ contains at least one planar virtual knotoid that is homotopic but not equivalent to $\chi_1$. More precisely, the planar virtual knotoids $u$ and $v$ shown in Figures~\ref{fig11} and~\ref{fig27}, respectively, have these properties. 
	\end{corollary}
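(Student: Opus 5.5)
The corollary has two halves, and I will handle them separately: homotopy to $\chi_1$ and non-equivalence to $\chi_1$. I will work with the explicit knotoids $u$ and $v$ of Figures~\ref{fig11} and~\ref{fig27}.

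\textbf{Homotopy.} Because of condition~(4) of Constructions~\ref{4.1} and~\ref{4.2}, I will not try to change crossings of $u$ (or $v$) one at a time and track the diagram. Instead, I will use the chord structure directly. In $G(u)$ the chords $c_2,\dots,c_{2n+1}$ have alternating signs on their first endpoints, and their endpoints occur in the interleaved order $c_2,\dots,c_{2n+1},c_2,\dots,c_{2n+1}$. My plan is as follows.
\begin{enumerate}
\item Apply crossing changes to suitable chords among $c_2,\dots,c_{2n+1}$. A crossing change reverses the orientation of a chord and flips its sign, as in Figure~\ref{fig9}.
\item Choose these changes so that consecutive chords $c_{2k},c_{2k+1}$ become parallel with opposite signs and the same orientation.
\item Check that each such pair is then cancellable by an $\Omega_2$-move of Figure~\ref{fig29}, the endpoints of the two chords being adjacent on the arc in each occurrence.
\item Iterate, peeling off pairs from the inside. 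After finitely many steps only $c_1$ remains, and the resulting one-chord Gauss diagram represents $\chi_1$ or $\chi_2$.
\item Since $\chi_1$ and $\chi_2$ are homotopic by~\cite{FLV25}, conclude that $u$ is homotopic to $\chi_1$.
\end{enumerate}
For $v$ the same procedure applies to the pairs $c_{2k-1},c_{2k}$, $k\le n$, leaving $c_{2n+1}$. For the concrete small examples ($n=2$) I will simply exhibit the two required crossing changes and two $\Omega_2$-moves explicitly.

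\textbf{Non-equivalence.} I will use Theorem~\ref{thm-invc}, or failing that the normalized bracket $\langle\cdot\rangle_\circ$.
\begin{itemize}
\item For $u$: the $2$-covering is $\chi_1$ or $\chi_2$, and so is that of $\chi_1$, so the $2$-covering does not distinguish them. The $r$-covering for $r\ge 2$ of $\chi_1$ is $\chi_1$ itself, since its unique chord has index $0$. I will therefore take an $r$ that retains more chords of $u$. For $r=1$ nothing is virtualized, so I will need an invariant. The plan is to compute $H_D(t,y,z)$ from~\cite{FLV25}, or the writhe-normalized bracket, for $u$ and $\chi_1$ and show they differ.
\item A cheaper option is available if it applies. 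The chords $c_2,\dots,c_5$ of $u$ have index $\pm1$, while $\chi_1$ has only an index-$0$ chord. An index-polynomial type invariant $\sum_c \sgn(c)(t^{\Ind(c)}-1)$ (the $y^0$ part of $H$) then distinguishes them whenever the signed count of indices is nonzero. I will verify this from Figure~\ref{fig11}(b), where the signs are read off directly.
\item For $v$ the indices are $\pm2$, and I will use the same argument.
\end{itemize}

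The main obstacle is this non-equivalence step. The invariant I use must be valid for planar virtual knotoids, not just classical ones. The index polynomial is invariant by the same case analysis as Theorem~\ref{thm-invc}: $\Omega_1$ adds an index-$0$ chord, and $\Omega_2$ adds chords of equal index and opposite sign. For $u$ and $v$ the signed sum could cancel, however, so I must check the actual signs. If it cancels, I will fall back on $\langle\cdot\rangle_\circ$.
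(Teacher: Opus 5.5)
Your homotopy half is the paper's argument in general form. The paper changes the crossings at $c_2,c_5$ of $u$ and at $c_1,c_3$ of $v$; after that, each adjacent pair is co-oriented with opposite signs and is removed by an $\Omega_2$-move. No peeling from the inside is needed, since each pair is already adjacent in both blocks. The required changes always exist. A crossing change reverses both the direction and the sign of a chord, so the first-endpoint mark in condition~(3) is unchanged. An adjacent co-oriented pair is $\Omega_2$-cancellable exactly when these marks are opposite, which is what condition~(3) gives.

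The real difference is in non-equivalence. The paper uses the odd writhe for $u$ ($J(u)=-4$, $J(\chi_1)=0$) and the invariant $H$ for $v$. You use the single polynomial $P(t)=\sum_c\sgn(c)\,(t^{\Ind(c)}-1)$. Its invariance follows from the move analysis of Theorem~\ref{thm-invc}, provided you add two points your sketch omits. First, under $\Omega_3$ the three chords keep their indices and signs. Second, the two $\Omega_2$ chords point to the same side of any chord crossing both, so no other index changes.

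This works, and the cancellation you fear does not occur:
\begin{itemize}
\item In Figure~\ref{fig11}(b), the chords $c_2,\dots,c_5$ all have sign $-1$ (as $J(u)=-4$ confirms) and indices $\pm1$, two of each. Hence $P_u=-2(t+t^{-1}-2)$.
\item In Figure~\ref{fig27}(b), the chords $c_1,\dots,c_4$ all have sign $-1$ and indices $\pm2$, two of each, and $c_5$ has index $0$. Hence $P_v=-2(t^2+t^{-2}-2)$, the same data as the paper's $H_v$.
\item Finally, $P_{\chi_1}=0$.
\end{itemize}

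Since $\Ind(c)$ has the parity of the number of chords crossing $c$, i.e.\ the Gauss-code parity of the crossing, $P(-1)=-2J$. So $P$ refines the odd writhe. The paper's $J$ is cheaper for $u$, since it needs only signs and no indices. But $J$ vanishes on $v$, all of whose chords are even, which is why the paper must switch to $H$. Your invariant handles $u$ and $v$ uniformly.

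Two side remarks. Calling $P$ \emph{the $y^0$ part of $H$} does not match the paper's values ($H_v$ lives in degree $y^2$), though nothing in your argument depends on it. Also, the $r$-covering bullet is a dead end. Only $r=1$ keeps the index-$\pm1$ chords of $u$, while $v^{(2)}=v$ and $v^{(r)}$ for $r\ge 3$ retains only $c_5$. So no fallback to $\langle\cdot\rangle_\circ$ is needed.
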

	
	\begin{proof} 
		The Gauss diagrams in Figures~\ref{fig11} and~\ref{fig27} show that $u\in\mathcal{U}$ and $v\in\mathcal{V}$, respectively. For the diagram of $u$, we perform crossing changes at the crossings corresponding to chords $c_2$ and $c_5$ in its Gauss diagram. This yields the diagram shown in Figure~\ref{fig31}. 
		\begin{figure}[htbp]
			\centering
			\tikzset{every picture/.style={line width=0.75pt}}        
			\begin{tikzpicture}[x=0.75pt,y=0.75pt,yscale=-1,xscale=1]
				\draw [black, very thick]    (136.87,498.2) .. controls (141.53,509.54) and (125.17,518.57) .. (110.26,514.53) .. controls (95.34,510.49) and (89.53,498.57) .. (96.53,482.91) .. controls (103.53,467.24) and (125.53,455.87) .. (153.53,453.54) .. controls (181.53,451.2) and (217.2,466.54) .. (218.53,509.2) .. controls (219.87,551.87) and (212.87,579.54) .. (186.87,583.87) .. controls (160.87,588.2) and (135.53,578.87) .. (136.87,561.54) .. controls (138.2,544.2) and (154.87,541.2) .. (168.2,548.87) ; 
				\draw [black, very thick]    (115.13,507.04) .. controls (122.2,491.2) and (136.87,483.87) .. (153.87,486.54) ; 
				\draw [black, very thick]    (106.87,481.87) .. controls (113.87,473.87) and (123.53,472.2) .. (128.53,482.54) ;
				\draw [black, very thick]    (158.64,500.1) .. controls (155.19,511.01) and (148.84,553.73) .. (151.56,559.91) .. controls (154.29,566.09) and (163.02,565.91) .. (171.93,557.55) ;
				\draw [black, very thick]    (172.87,491.2) .. controls (184.6,494.04) and (191.2,503.87) .. (190.53,509.87) ; 
				\draw [black, very thick]    (181.47,505.04) .. controls (174.53,523.2) and (200.2,515.2) .. (207.2,528.87) .. controls (214.2,542.54) and (197.2,563.87) .. (183.2,557.2) ;
				\draw  [black, very thick]  (159.06,511.92) -- (154.13,518.56) -- (152.24,510.29) ;
				\draw  [fill={rgb, 255:red, 0; green, 0; blue, 0 }  ,fill opacity=1 ][black, very thick]  (110.61,507.04) .. controls (110.61,504.54) and (112.64,502.52) .. (115.13,502.52) .. controls (117.63,502.52) and (119.65,504.54) .. (119.65,507.04) .. controls (119.65,509.53) and (117.63,511.56) .. (115.13,511.56) .. controls (112.64,511.56) and (110.61,509.53) .. (110.61,507.04) -- cycle ;
				\draw  [fill={rgb, 255:red, 0; green, 0; blue, 0 }  ,fill opacity=1 ][black, very thick]  (102.35,481.87) .. controls (102.35,479.37) and (104.37,477.35) .. (106.87,477.35) .. controls (109.36,477.35) and (111.39,479.37) .. (111.39,481.87) .. controls (111.39,484.37) and (109.36,486.39) .. (106.87,486.39) .. controls (104.37,486.39) and (102.35,484.37) .. (102.35,481.87) -- cycle ; 
				\draw  [black, very thick]  (146.69,545.61) .. controls (146.69,543.11) and (148.71,541.09) .. (151.21,541.09) .. controls (153.71,541.09) and (155.73,543.11) .. (155.73,545.61) .. controls (155.73,548.1) and (153.71,550.13) .. (151.21,550.13) .. controls (148.71,550.13) and (146.69,548.1) .. (146.69,545.61) -- cycle ;
				\draw [black, very thick]    (187.78,489.69) .. controls (196.65,473.55) and (180.94,460.67) .. (165.75,480.45) ; 
				\draw [black, very thick]    (179.89,546.38) .. controls (186.57,536.14) and (188.79,529.26) .. (188.2,525.72) ; 
				\draw [color={rgb, 255:red, 208; green, 2; blue, 27 }  ,draw opacity=1 ][red, very thick]    (149.84,486.17) .. controls (157.94,486.67) and (169.34,489.77) .. (175.44,492.07) ; 
				\draw [color={rgb, 255:red, 208; green, 2; blue, 27 }  ,draw opacity=1 ][red, very thick]    (166.6,548.01) .. controls (169.18,549.19) and (180.85,556.76) .. (184.93,557.84) ;
			\end{tikzpicture}
			\caption{Two crossing changes are performed on the diagram of $u$.}
			\label{fig31}
		\end{figure}
		It is straightforward to verify that applying two $\Omega_2$-moves followed by one $\Omega_1^v$-move to this diagram produces the diagram of $\chi_1$ given in Figure~\ref{fig6}. Thus, $u$ is homotopic to $\chi_1$.
		
		Similarly, for the diagram of $v$, we perform crossing changes at the crossings corresponding to chords $c_1$ and $c_3$ in its Gauss diagram. After these crossing changes, the resulting diagram is equivalent to the diagram of $\chi_1$ in Figure~\ref{fig6}. Thus, $v$ is homotopic to $\chi_1$.
		
		Indeed, by calculating the odd writhe invariant $J$ defined in~\cite{GK17} for $u$ and $\chi_1$, one can see that
		$$
		J(u) = -4 \quad \text{and} \quad J(\chi_1) = 0.
		$$
		Since the odd writhe is an invariant of virtual knotoids, $u$ and $\chi_1$ are not equivalent. Furthermore, computing the values of the three-variable transcendental invariant~\cite{FLV25} associated with $v$ and $\chi_1$, we obtain
		$$
		H_{v}(t, y, z) = (-2)(t^2 + t^{-2} - 2)y^2 \quad \text{and} \quad H_{\chi_1}(t, y, z) = 0.
		$$
		Since $H$ is also an invariant of virtual knotoids, these distinct values show that $v$ and $\chi_1$ are not equivalent.
	\end{proof}
	
	\begin{corollary}\label{cor4}
		The family $\mathcal{W}$ contains at least one planar virtual knotoid that is homotopic but not equivalent to the trivial knotoid. In particular, the planar virtual knotoid $w$ shown in Figure~\ref{fig26} has these properties. 
	\end{corollary}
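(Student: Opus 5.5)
Two things have to be shown for the knotoid $w$ of Figure~\ref{fig26}: it is homotopic to the trivial knotoid $O$, and it is not equivalent to $O$. The plan follows the proof of Corollary~\ref{cor1}.

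\emph{Homotopy.} I will read the Gauss diagram in Figure~\ref{fig26}(b). It has four chords $c_1,c_2,c_3,c_4$, with endpoints in the order $c_1,c_2,c_3,c_4,c_1,c_2,c_3,c_4$ and first-endpoint signs alternating $+,-,+,-$. I will change the crossings of two suitably chosen chords, say $c_2$ and $c_4$ (or $c_1$ and $c_3$). A crossing change reverses the direction of a chord and its sign, so after it the two reversed chords should pair up with their neighbours. I expect two chord pairs to become $\Omega_2$-configurations on the Gauss diagram (Figure~\ref{fig29}): adjacent parallel chords of opposite sign. Deleting them by two $\Omega_2$-moves leaves a chord-free Gauss diagram. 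Equivalently, on the planar diagram (a), the two crossing changes should let two $\Omega_2$-moves remove all four classical crossings, and then virtual moves and planar isotopy leave a trivial arc.

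The main obstacle is checking that the chosen pair of crossing changes really gives the matching over/under and sign pattern needed for $\Omega_2$. If $c_2,c_4$ does not work, I will try the other alternating pair, or a crossing change at $c_1$ together with one at $c_4$. This has to be verified directly on Figure~\ref{fig26}, as in the $u$ case, possibly with an extra $\Omega_1^v$ or $\Omega_v$ move to tidy up virtual crossings.

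\emph{Non-equivalence.} The plan is to use an invariant that vanishes on $O$. By the computation in the proof of Theorem~\ref{thm-UVW}, every chord of $w$ has index of absolute value $1$. Hence the odd writhe $J$ of~\cite{GK17}, which sums the signs of odd chords, gives $J(w)=\sum_i \operatorname{sgn}(c_i)$, while $J(O)=0$. From the picture all four crossings seem to carry the same sign, which would give $J(w)=\pm 4\neq 0$.

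If the signs instead cancel, I will fall back on the transcendental invariant $H$ of~\cite{FLV25}. It is assembled from the chords' $\operatorname{sgn}$ and index data and is zero on the trivial knotoid, so a direct evaluation should show $H_w\neq 0=H_O$. Either computation proves that $w$ is not equivalent to $O$.
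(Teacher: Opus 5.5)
Your plan is correct and follows the paper's proof: two crossing changes followed by two $\Omega_2$-moves show that $w$ is homotopic to $O$. For non-equivalence, all four chords have odd index and sign $-1$, so the odd writhe gives $J(w)=-4\neq 0=J(O)$. The paper changes the crossings at $c_2$ and $c_3$ rather than your $c_2,c_4$, but any choice of one chord from $\{c_1,c_2\}$ and one from $\{c_3,c_4\}$ turns both pairs into $\Omega_2$-configurations. So your first choice and each of your fallbacks works, and the $H$ fallback is not needed.
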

	
	\begin{proof}
		The Gauss diagram in Figure~\ref{fig26} shows that $w\in\mathcal{W}$. For the diagram of $w$, we perform crossing changes at the crossings corresponding to chords $c_2$ and $c_3$ in its Gauss diagram. The resulting diagram is equivalent to the trivial knotoid, so $w$ is homotopic to the trivial knotoid. A direct computation of the odd writhe invariant $J$ shows that 
		$$
		J(w) = -4 \quad \text{and} \quad J(O) = 0.
		$$
		Since  $J$ is an invariant of virtual knotoids, $w$ is not equivalent to the trivial knotoid. 
	\end{proof}
	
	\begin{corollary}\label{cor2}
		Let $K$ be a planar virtual knotoid, and $r \in \mathbb Z_+$ with $r>1$. If $K^{(r)}$ is equivalent to either $\chi_1$ or $\chi_2$ shown in Figure~\ref{fig6}, then $K$ is not homotopic to the trivial knotoid.
	\end{corollary}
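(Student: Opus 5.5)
We argue by contradiction, using Theorem~\ref{thm-ccov} together with the observation that the $r$-covering of the trivial knotoid is again trivial. Suppose $K$ is homotopic to the trivial planar knotoid $O$. Theorem~\ref{thm-ccov}, applied to the pair $K, O$ with the given $r$, shows that $K^{(r)}$ is homotopic to $O^{(r)}$.

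Next we compute $O^{(r)}$. The trivial knotoid has a chord-free Gauss diagram. Virtualization changes nothing in the absence of chords, so $O^{(r)}$ is represented by the trivial diagram. By Theorem~\ref{thm-invc} the class $O^{(r)}$ is independent of the chosen diagram, so $O^{(r)}=O$.

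By hypothesis $K^{(r)}$ is equivalent to $\chi_1$ or $\chi_2$, hence homotopic to it. Since homotopy is an equivalence relation, $\chi_1$ or $\chi_2$ would then be homotopic to $O$. This contradicts the fact, recalled after Definition~\ref{def2.6} (from~\cite{FLV25}), that neither $\chi_1$ nor $\chi_2$ is homotopic to the trivial knotoid. Hence $K$ is not homotopic to $O$.

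The only delicate point is that the non-homotopy of $\chi_i$ and $O$ in~\cite{FLV25} is established for planar knotoids, while here the homotopy would a priori run through planar virtual knotoids. I would close this gap with Theorem~\ref{thm-H} (extended to the virtual setting, as its use for $v$ in Corollary~\ref{cor1} suggests), or with a direct invariant of homotopy. A candidate is the parity data for the single chord of $\chi_i$, whose index class is unchanged by crossing changes up to sign. The same argument used in the proof of Theorem~\ref{thm-UVW} already assumes this fact, so I would simply cite it there as well. The hypothesis $r>1$ is not used in the argument above; it only excludes the trivial case $K^{(1)}=K$.
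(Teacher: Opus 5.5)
Your skeleton is exactly the paper's proof. You assume $K$ is homotopic to $O$, apply Theorem~\ref{thm-ccov} together with $O^{(r)}=O$, and contradict the non-homotopy of $\chi_1,\chi_2$ and $O$ quoted from~\cite{FLV25}. You are right that this quoted fact concerns classical planar knotoids. However, the repairs you propose cannot work. Theorem~\ref{thm-H} gives nothing here: the paper itself records $H_{\chi_1}=0$, and $H_O=0$. No datum attached to the chord of $\chi_1$ (index, parity, sign) helps either. That chord is isolated with index $0$, so the Gauss diagram of $\chi_1$ is the Gauss diagram of a kink and is erased by the Gauss-diagram $\Omega_1$-move. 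Meanwhile all moves of $\textit{g}\mathcal{R}$, $\Omega_v$ included, leave Gauss diagrams unchanged (Case (d) in the proof of Theorem~\ref{thm-invc}). Pointing to the proof of Theorem~\ref{thm-UVW} does not help, since that proof only quotes the same fact.

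In fact the gap is fatal: with $\Omega_v\in\textit{g}\mathcal{R}$, $\chi_1$ is \emph{equivalent} to $O$. The following sequence shows this.
\begin{enumerate}
\item Push the tail and the head out of the big loop of $\chi_1$ by $\Omega_v$-moves. The loop then meets the rest of the diagram only in two virtual crossings with the end-arcs.
\item Shrink the loop by a planar isotopy until it hugs the classical crossing on the side of the end-arcs.
\item Slide the loop across that crossing by an $\Omega_3^m$-move.
\item Delete the two virtual curls this creates by $\Omega_1^v$-moves.
\item What remains is a classical kink in the sector between the two loop half-edges (the sector that faced the unbounded region). Remove it by $\Omega_1$.
\end{enumerate}
The same sequence works for $\chi_2$. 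So the fact you need is false for planar virtual knotoids, and the corollary fails as stated. For $K=O$ one has $K^{(r)}=O$, which is equivalent to $\chi_1$, yet $K$ is homotopic to $O$. Neither your argument nor the paper's one-line proof establishes the statement. A repair would have to change the setting (for instance the role of $\Omega_v$); adding an invariant cannot help.
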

	
	\begin{proof} 
		Suppose, to the contrary, that $K$ is homotopic to the trivial knotoid $O$. By Theorem~\ref{thm-ccov}, $K^{(r)}$ is homotopic to $O^{(r)}=O$. This contradicts the hypothesis, since neither $\chi_1$ nor $\chi_2$ is homotopic to~$O$.
	\end{proof}
	
	\begin{corollary}\label{cor3}
		Let $K$ be a planar virtual knotoid, and $r \in \mathbb Z_+$ with $r>1$. If $K^{(r)}$ is equivalent to the trivial knotoid, then $K$ is not homotopic to $\chi_1$ in Figure~\ref{fig6}.
	\end{corollary}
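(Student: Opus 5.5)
We argue by contradiction, in the same way as Corollary~\ref{cor2}. Suppose $K$ is homotopic to $\chi_1$. By Theorem~\ref{thm-ccov}, $K^{(r)}$ is then homotopic to $\chi_1^{(r)}$. Computing $\chi_1^{(r)}$ is therefore the first step.

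The diagram of $\chi_1$ in Figure~\ref{fig6} has exactly one classical crossing. Its Gauss diagram consists of a single isolated chord, so by definition that chord has index $0$. Since $0$ is divisible by every $r\in\mathbb Z_+$, the $r$-covering keeps this crossing. Hence $\chi_1^{(r)}=[D_{\chi_1}^{(r)}]=[D_{\chi_1}]=\chi_1$. By Theorem~\ref{thm-invc} this does not depend on the chosen diagram of $\chi_1$, so $K^{(r)}$ is homotopic to $\chi_1$.

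By hypothesis, $K^{(r)}$ is equivalent to the trivial knotoid $O$. Therefore $O$ would be homotopic to $\chi_1$. This contradicts the fact, recalled from~\cite{FLV25} before Figure~\ref{fig6}, that $\chi_1$ is not homotopic to the trivial knotoid. The condition $r>1$ plays no real role in this argument; for $r=1$ we would have $K^{(1)}=K$ and the statement is equally true.

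The main point needing care is the homotopy fact about $\chi_1$. The cited result of~\cite{FLV25} concerns planar knotoids, but here homotopy allows virtual moves. If this must be justified in the virtual setting, I would use the invariant $H$ and Theorem~\ref{thm-H}, or alternatively a parity or linking argument between the endpoints that is preserved by crossing changes and all moves in $g\mathcal{R}$. Given the way the paper already uses this fact in Corollary~\ref{cor2} and Theorem~\ref{thm-UVW}, I will simply cite it.
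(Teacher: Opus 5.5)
Your argument is the paper's proof almost verbatim. You assume $K$ is homotopic to $\chi_1$ and apply Theorem~\ref{thm-ccov}. You note $\chi_1^{(r)}=\chi_1$ because its only chord is isolated and so has index $0$. You then contradict the assertion that $\chi_1$ is not homotopic to $O$. Your remark that $r>1$ is never used is also correct.

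The weak point is the step you flagged yourself. It is a genuine gap, shared with the paper, and your proposed repairs do not close it. The paper's own computation $H_{\chi_1}=0$ (proof of Corollary~\ref{cor1}) shows that $H$ cannot separate $\chi_1$ from $O$, and Theorem~\ref{thm-H} concerns classical planar knotoids anyway. Parity or index data cannot help either: the Gauss diagram of $\chi_1$ is a single isolated chord, exactly that of an $\Omega_1$-kink. Any argument treating the endpoints as punctures (linking or winding around them) breaks too, because $\Omega_v\in\textit{g}\mathcal{R}$ lets an endpoint pass through a strand virtually.

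In fact, with $\Omega_v$ allowed as in Figure~\ref{fig4}, $\chi_1$ is \emph{equivalent} to $O$. Let $c$ be its crossing, let $\alpha,\beta$ be the arcs from $c$ to the tail and head, and let $L$ be the loop from $c$ back to $c$; in Figure~\ref{fig6}, $L$ encloses $\alpha\cup\beta$.
\begin{enumerate}
\item Two $\Omega_v$-moves push the tail and head out through the bottom of $L$, so $L$ now meets $\alpha$ and $\beta$ in two virtual crossings.
\item The sub-arc of $L$ between these two virtual crossings bounds, together with $\alpha$ and $\beta$, an empty triangle with corner $c$. An $\Omega_3^m$-move therefore carries it across $c$, after which it crosses the two half-edges of $L$ at $c$ instead.
\item This produces two empty virtual curls, removed by two $\Omega_1^v$-moves. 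What is left of $L$ is an empty classical curl at $c$, removed by $\Omega_1$.
\end{enumerate}
Consequently $K=O$ satisfies the hypothesis for every $r$ while being homotopic (even equivalent) to $\chi_1$. Under the definitions as written the corollary is false, and no citation can complete your argument or the paper's. To repair it, one would have to drop $\Omega_v$ from $\textit{g}\mathcal{R}$. One would then still need an invariant that sees the endpoints as punctures and separates $\chi_1$ from $O$ up to homotopy in that virtual setting. That invariant, not the covering argument, is the missing ingredient.
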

	
	\begin{proof}
		Suppose, to the contrary, that $K$ is homotopic to $\chi_1$. The unique crossing of $\chi_1$ has index $0$, so $\chi_1^{(r)}=\chi_1$. By Theorem~\ref{thm-ccov}, $K^{(r)}$ is homotopic to $\chi_1^{(r)}=\chi_1$. This contradicts the hypothesis that $K^{(r)}$ is equivalent to the trivial knotoid, because the trivial knotoid is not homotopic to $\chi_1$.
	\end{proof}
	
	With Theorem~\ref{thm-ccov}, we can also calculate the Gordian distances among homotopic planar virtual knotoids, as illustrated by the examples in the next section. We complete the section with the following questions.
	
	\begin{question}
		Does either $\mathcal{U}$ or $\mathcal{V}$ contain a planar virtual knotoid that is not homotopic to $\chi_1$?
	\end{question}
	
	\begin{question} 
		Does $\mathcal{W}$ contain a planar virtual knotoid that is not homotopic to the trivial knotoid? 
	\end{question}

	\section{Gordian distance: examples}\label{sec5}
	
	In this section, we give several examples for computing the Gordian distances between homotopic planar virtual knotoids. 
	
	\begin{example} \label{ex:3} {\rm
			The diagrams and Gauss diagrams of $K_1$ and $K_2$ are illustrated in Figure~\ref{fig15}~(a) and ~(b). 
			\begin{figure}[htbp]
				\begin{center}
					\begin{subfigure}{0.34\textwidth}
						\centering
						\tikzset{every picture/.style={line width=0.75pt}} 
						\begin{tikzpicture}[x=0.75pt,y=0.75pt,yscale=-0.8,xscale=0.8]
							\draw [black, very thick]  (112.09,116.67) .. controls (126.79,127.25) and (135.18,140) .. (137.44,155.52) ;
							\draw [black, very thick]  (89.46,100.27) .. controls (47.82,74.53) and (15.42,148.9) .. (80.4,191.91) ;
							\draw [black, very thick]  (180.1,164.01) .. controls (178.15,177.51) and (152.74,235.21) .. (101.12,205.89) ;
							\draw [black, very thick]  (64.6,163.15) .. controls (71.57,142.71) and (85.99,122.17) .. (103.94,104.52) .. controls (121.89,86.88) and (174.94,59.38) .. (178.65,138.4) ;
							\draw [black, very thick]  (55.66,186.08) .. controls (48.54,209.82) and (77.39,213.51) .. (96.33,194.53) .. controls (115.27,175.56) and (135,185.01) .. (143.98,194.67) ;
							\draw [black, very thick]  (166.12,216.35) .. controls (185.8,234.39) and (223.71,200.56) .. (225.11,155.29) .. controls (226.5,110.03) and (174.52,69.11) .. (153.13,68.74) .. controls (131.75,68.38) and (131.19,92.34) .. (139.21,109.13) .. controls (147.23,125.93) and (174.65,158.34) .. (211.76,158.61) .. controls (248.87,158.89) and (268.7,137.95) .. (228.27,125.79) ;
							\draw [black, very thick]  (208.37,119.95) .. controls (197.87,115.75) and (188.82,119.25) .. (190.69,131.75) ;
							\draw [black, very thick]  (207.72,197.89) -- (214.77,193.17) -- (213.97,201.27) ;
							\draw  [fill={rgb, 255:red, 0; green, 0; blue, 0 }  ,fill opacity=1 ][black, very thick]  (188.23,127.97) .. controls (190.32,126.6) and (193.12,127.2) .. (194.48,129.29) .. controls (195.84,131.38) and (195.25,134.18) .. (193.16,135.54) .. controls (191.07,136.9) and (188.27,136.31) .. (186.91,134.22) .. controls (185.54,132.13) and (186.14,129.33) .. (188.23,127.97) -- cycle ;
							\draw [black, very thick]  (222.52,154.13) .. controls (224.62,152.76) and (227.42,153.36) .. (228.78,155.45) .. controls (230.14,157.54) and (229.55,160.34) .. (227.46,161.7) .. controls (225.36,163.06) and (222.56,162.47) .. (221.2,160.38) .. controls (219.84,158.29) and (220.43,155.49) .. (222.52,154.13) -- cycle ;
							\draw [fill={rgb, 255:red, 0; green, 0; blue, 0 }  ,fill opacity=1 ][black, very thick]  (134.97,151.73) .. controls (137.07,150.37) and (139.87,150.96) .. (141.23,153.05) .. controls (142.59,155.15) and (142,157.95) .. (139.91,159.31) .. controls (137.81,160.67) and (135.01,160.08) .. (133.65,157.99) .. controls (132.29,155.89) and (132.88,153.09) .. (134.97,151.73) -- cycle ;
							\draw [black, very thick]  (132.14,83.1) .. controls (134.24,81.73) and (137.04,82.33) .. (138.4,84.42) .. controls (139.76,86.51) and (139.17,89.31) .. (137.08,90.67) .. controls (134.98,92.03) and (132.18,91.44) .. (130.82,89.35) .. controls (129.46,87.26) and (130.05,84.46) .. (132.14,83.1) -- cycle ;
							\draw (115,236) node [anchor=north west][inner sep=0.75pt] {$K_1$};
						\end{tikzpicture}	
					\end{subfigure}
					\qquad
					\begin{subfigure}{0.34\textwidth}
						\centering
						\tikzset{every picture/.style={line width=0.75pt}} 
						\begin{tikzpicture}[x=0.75pt,y=0.75pt,yscale=-0.8,xscale=0.8]
							\draw [color={rgb, 255:red, 0; green, 0; blue, 0 }  ,draw opacity=1 ][black, very thick]    (385.27,115.95) .. controls (399.94,126.58) and (408.28,139.36) .. (410.49,154.88) ; 
							\draw [color={rgb, 255:red, 0; green, 0; blue, 0 }  ,draw opacity=1 ][black, very thick]    (362.69,99.47) .. controls (330,79.12) and (299.81,122.36) .. (327.62,165.86) ;
							\draw [color={rgb, 255:red, 0; green, 0; blue, 0 }  ,draw opacity=1 ][black, very thick]    (453.12,163.51) .. controls (451.12,177) and (425.52,234.63) .. (374,205.14) ;
							\draw [color={rgb, 255:red, 0; green, 0; blue, 0 }  ,draw opacity=1 ][black, very thick]    (337.62,162.28) .. controls (344.66,141.85) and (359.15,121.36) .. (377.16,103.78) .. controls (395.16,86.19) and (448.31,58.87) .. (451.75,137.9) ;
							\draw [color={rgb, 255:red, 0; green, 0; blue, 0 }  ,draw opacity=1 ][black, very thick]    (338.06,161.05) .. controls (326.14,192.12) and (327.62,199.67) .. (333.57,203.48) .. controls (339.51,207.29) and (348.38,205.58) .. (353.83,203.63) ;
							\draw [color={rgb, 255:red, 0; green, 0; blue, 0 }  ,draw opacity=1 ][black, very thick]    (438.97,215.81) .. controls (458.59,233.92) and (496.61,200.21) .. (498.15,154.95) .. controls (499.7,109.69) and (447.85,68.59) .. (426.47,68.16) .. controls (405.09,67.73) and (404.45,91.68) .. (412.41,108.5) .. controls (420.37,125.33) and (447.68,157.83) .. (484.79,158.22) .. controls (521.9,158.62) and (541.81,137.75) .. (501.42,125.45) ;
							\draw [color={rgb, 255:red, 0; green, 0; blue, 0 }  ,draw opacity=1 ][black, very thick]    (481.53,119.55) .. controls (471.05,115.31) and (461.99,118.79) .. (463.82,131.3) ;
							\draw  [color={rgb, 255:red, 0; green, 0; blue, 0 }  ,draw opacity=1 ][black, very thick]  (480.63,197.48) -- (487.7,192.79) -- (486.86,200.89) ;
							\draw  [color={rgb, 255:red, 0; green, 0; blue, 0 }  ,draw opacity=1 ][fill={rgb, 255:red, 0; green, 0; blue, 0 }  ,fill opacity=1 ][black, very thick]  (461.37,127.5) .. controls (463.46,126.14) and (466.26,126.75) .. (467.62,128.84) .. controls (468.97,130.94) and (468.37,133.74) .. (466.27,135.09) .. controls (464.18,136.45) and (461.38,135.85) .. (460.02,133.75) .. controls (458.67,131.65) and (459.27,128.85) .. (461.37,127.5) -- cycle ;
							\draw  [color={rgb, 255:red, 0; green, 0; blue, 0 }  ,draw opacity=1 ][black, very thick]  (495.58,153.77) .. controls (497.67,152.42) and (500.47,153.02) .. (501.83,155.11) .. controls (503.18,157.21) and (502.58,160.01) .. (500.48,161.36) .. controls (498.39,162.72) and (495.59,162.12) .. (494.23,160.02) .. controls (492.88,157.92) and (493.48,155.13) .. (495.58,153.77) -- cycle ;
							\draw  [color={rgb, 255:red, 0; green, 0; blue, 0 }  ,draw opacity=1 ][fill={rgb, 255:red, 0; green, 0; blue, 0 }  ,fill opacity=1 ][black, very thick]  (408.03,151.09) .. controls (410.13,149.73) and (412.93,150.33) .. (414.28,152.43) .. controls (415.64,154.53) and (415.04,157.33) .. (412.94,158.68) .. controls (410.84,160.04) and (408.05,159.43) .. (406.69,157.34) .. controls (405.34,155.24) and (405.94,152.44) .. (408.03,151.09) -- cycle ;
							\draw  [color={rgb, 255:red, 0; green, 0; blue, 0 }  ,draw opacity=1 ][black, very thick]  (405.43,82.44) .. controls (407.53,81.09) and (410.33,81.69) .. (411.68,83.79) .. controls (413.04,85.88) and (412.44,88.68) .. (410.34,90.04) .. controls (408.24,91.39) and (405.44,90.79) .. (404.09,88.69) .. controls (402.73,86.6) and (403.34,83.8) .. (405.43,82.44) -- cycle ;
							\draw [color={rgb, 255:red, 0; green, 0; blue, 0 }  ,draw opacity=1 ][black, very thick]    (340.95,181.65) .. controls (352.31,189.79) and (362.47,199.75) .. (375.76,206.04) ;
							\draw [color={rgb, 255:red, 0; green, 0; blue, 0 }  ,draw opacity=1 ][black, very thick]    (370.37,192.86) .. controls (378.52,185.21) and (397.3,173.96) .. (416.9,194.06) ;
							\draw (398,236) node [anchor=north west][inner sep=0.75pt] {$K_2$};
						\end{tikzpicture}
					\end{subfigure}
					\begin{subfigure}{0.34\textwidth}
						\centering
						\tikzset{every picture/.style={line width=0.75pt}} 
						\begin{tikzpicture}[x=0.75pt,y=0.75pt,yscale=-1,xscale=1]
							\draw  [draw opacity=0][black, very thick]  (194.14,389.58) .. controls (184.38,413.58) and (161.15,430.47) .. (134.04,430.47) .. controls (98.15,430.47) and (69.04,400.85) .. (69.04,364.32) .. controls (69.04,327.79) and (98.15,298.17) .. (134.04,298.17) .. controls (161.33,298.17) and (184.68,315.28) .. (194.32,339.52) -- (134.04,364.32) -- cycle ; \draw  [line width=1.5]  (194.14,389.58) .. controls (184.38,413.58) and (161.15,430.47) .. (134.04,430.47) .. controls (98.15,430.47) and (69.04,400.85) .. (69.04,364.32) .. controls (69.04,327.79) and (98.15,298.17) .. (134.04,298.17) .. controls (161.33,298.17) and (184.68,315.28) .. (194.32,339.52) ;  
							\draw  [fill={rgb, 255:red, 0; green, 0; blue, 0 }  ,fill opacity=1 ][black, very thick]  (189.62,389.58) .. controls (189.62,387.08) and (191.64,385.06) .. (194.14,385.06) .. controls (196.63,385.06) and (198.66,387.08) .. (198.66,389.58) .. controls (198.66,392.08) and (196.63,394.1) .. (194.14,394.1) .. controls (191.64,394.1) and (189.62,392.08) .. (189.62,389.58) -- cycle ;
							\draw  [fill={rgb, 255:red, 0; green, 0; blue, 0 }  ,fill opacity=1 ][black, very thick]  (189.8,339.52) .. controls (189.8,337.03) and (191.83,335) .. (194.32,335) .. controls (196.82,335) and (198.84,337.03) .. (198.84,339.52) .. controls (198.84,342.02) and (196.82,344.04) .. (194.32,344.04) .. controls (191.83,344.04) and (189.8,342.02) .. (189.8,339.52) -- cycle ;
							\draw [black, very thick, ->]    (128.82,430.27) -- (185.31,405.06) ;
							\draw [black, very thick, ->]    (161.55,424.27) -- (77.5,331.86) ;
							\draw [black, very thick, ->]    (70.28,351.16) -- (185.73,324.27) ;
							\draw [black, very thick, ->]    (93.55,312.62) -- (102.64,422.25) ;
							\draw [red, very thick, ->]    (166.28,306.82) -- (70.28,377) ;
							\draw [red, very thick, ->]    (82.09,404.07) -- (137,298.44) ;
							\draw (49,371.76) node [anchor=north west][inner sep=0.75pt]  [font=\large]  {$c_{2}$};
							\draw (129.86,281) node [anchor=north west][inner sep=0.75pt]  [font=\large]  {$c_{3}$};
							\draw (90.6,426.52) node [anchor=north west][inner sep=0.75pt]  [font=\large]  {$c_{4}$};
							\draw (165.73,294.37) node [anchor=north west][inner sep=0.75pt]  [font=\scriptsize]  {$-$};
							\draw (53.22,344.69) node [anchor=north west][inner sep=0.75pt]  [font=\scriptsize]  {$-$};
							\draw (71.02,407.92) node [anchor=north west][inner sep=0.75pt]  [font=\scriptsize]  {$-$};
							\draw (84.2,298.13) node [anchor=north west][inner sep=0.75pt]  [font=\scriptsize]  {$-$};
							\draw (57.61,317.97) node [anchor=north west][inner sep=0.75pt]  [font=\large]  {$c_{5}$};
							\draw (188.8,312.16) node [anchor=north west][inner sep=0.75pt]  [font=\large]  {$c_{1}$};
							\draw (119.47,435.74) node [anchor=north west][inner sep=0.75pt]  [font=\scriptsize]  {$+$};
							\draw (162.65,430) node [anchor=north west][inner sep=0.75pt]  [font=\scriptsize]  {$+$};
							\draw (191.85,398.76) node [anchor=north west][inner sep=0.75pt]  [font=\large]  {$c_{6}$};
							\draw (120,445) node [anchor=north west][inner sep=0.75pt]    {(a)};
						\end{tikzpicture}	
					\end{subfigure}
					\qquad 
					\begin{subfigure}{0.34\textwidth}
						\centering
						\tikzset{every picture/.style={line width=0.75pt}} 
						\begin{tikzpicture}[x=0.75pt,y=0.75pt,yscale=-1,xscale=1]
							\draw  [draw opacity=0][line width=1.5]  (477.14,389.58) .. controls (467.38,413.58) and (444.15,430.47) .. (417.04,430.47) .. controls (381.15,430.47) and (352.04,400.85) .. (352.04,364.32) .. controls (352.04,327.79) and (381.15,298.17) .. (417.04,298.17) .. controls (444.33,298.17) and (467.68,315.28) .. (477.32,339.52) -- (417.04,364.32) -- cycle ; \draw  [line width=1.5]  (477.14,389.58) .. controls (467.38,413.58) and (444.15,430.47) .. (417.04,430.47) .. controls (381.15,430.47) and (352.04,400.85) .. (352.04,364.32) .. controls (352.04,327.79) and (381.15,298.17) .. (417.04,298.17) .. controls (444.33,298.17) and (467.68,315.28) .. (477.32,339.52) ;  
							\draw  [fill={rgb, 255:red, 0; green, 0; blue, 0 }  ,fill opacity=1 ][line width=1.5]  (472.62,389.58) .. controls (472.62,387.08) and (474.64,385.06) .. (477.14,385.06) .. controls (479.63,385.06) and (481.66,387.08) .. (481.66,389.58) .. controls (481.66,392.08) and (479.63,394.1) .. (477.14,394.1) .. controls (474.64,394.1) and (472.62,392.08) .. (472.62,389.58) -- cycle ;
							\draw  [fill={rgb, 255:red, 0; green, 0; blue, 0 }  ,fill opacity=1 ][line width=1.5]  (472.8,339.52) .. controls (472.8,337.03) and (474.83,335) .. (477.32,335) .. controls (479.82,335) and (481.84,337.03) .. (481.84,339.52) .. controls (481.84,342.02) and (479.82,344.04) .. (477.32,344.04) .. controls (474.83,344.04) and (472.8,342.02) .. (472.8,339.52) -- cycle ;
							\draw [black, very thick, ->]    (411.82,430.27) -- (468.31,405.06) ;
							\draw [black, very thick, ->]    (444.55,424.27) -- (360.5,331.86) ;
							\draw [black, very thick, ->]    (353.28,351.16) -- (468.73,324.27) ;
							\draw [black, very thick, ->]    (376.55,312.62) -- (385.64,422.25) ;
							\draw [red, very thick, <-]    (449.28,306.82) -- (353.28,377) ;
							\draw [red, very thick, <-]    (365.09,404.07) -- (420,298.44) ;
							\draw (337.5,378) node [anchor=north west][inner sep=0.75pt]  [font=\scriptsize]  {$+$};
							\draw (415,285) node [anchor=north west][inner sep=0.75pt]  [font=\scriptsize]  {$+$};
							\draw (378,426.52) node [anchor=north west][inner sep=0.75pt]  [font=\large]  {$c_{4}'$};
							\draw (448.73,289) node [anchor=north west][inner sep=0.75pt]  [font=\large]  {$c_{2}'$};
							\draw (336.22,348) node [anchor=north west][inner sep=0.75pt]  [font=\scriptsize]  {$-$};
							\draw (350,407.92) node [anchor=north west][inner sep=0.75pt]  [font=\large]  {$c_{3}'$};
							\draw (367.2,298.13) node [anchor=north west][inner sep=0.75pt]  [font=\scriptsize]  {$-$};
							\draw (340.61,315) node [anchor=north west][inner sep=0.75pt]  [font=\large]  {$c_{5}'$};
							\draw (472,311) node [anchor=north west][inner sep=0.75pt]  [font=\large]  {$c_{1}'$};
							\draw (402.47,435.74) node [anchor=north west][inner sep=0.75pt]  [font=\scriptsize]  {$+$};
							\draw (445.65,430) node [anchor=north west][inner sep=0.75pt]  [font=\scriptsize]  {$+$};
							\draw (474.85,395) node [anchor=north west][inner sep=0.75pt]  [font=\large]  {$c_{6}'$};
							\draw (405,445) node [anchor=north west][inner sep=0.75pt]    {(b)};
						\end{tikzpicture}	
					\end{subfigure}		
					\caption{Diagrams and Gauss diagrams of planar virtual knotoids $ K_1$ and $K_2$.}
					\label{fig15}
				\end{center}
			\end{figure}
			Direct computation yields the chord indices of the Gauss diagram of $K_1$,
			$$
			\Ind(c_1)=-2,\quad \Ind(c_2)=2,\quad \Ind(c_3)=-2,\quad \Ind(c_4)=2,\quad \Ind(c_5)=1, \quad \Ind(c_6)=-1;
			$$
			we then calculate those of the Gauss diagram of $K_2$,
			$$
			\Ind(c_1')=-2,\quad \Ind(c_2')=-2,\quad \Ind(c_3')=2,\quad \Ind(c_4')=2,\quad \Ind(c_5')=1, \quad \Ind(c_6')=-1.
			$$
			\begin{figure}[htbp]
				\begin{center}
					\tikzset{every picture/.style={line width=0.7pt}} 
					\begin{tikzpicture}[x=0.75pt,y=0.75pt,yscale=-0.8,xscale=0.8]
						\draw [black, very thick]    (97.65,623.44) .. controls (114.53,616.88) and (129.79,616.81) .. (144.03,623.38) ;
						\draw [black, very thick]    (71.55,633.45) .. controls (27.26,654.31) and (71.91,722.04) .. (143.41,691.04) ;
						\draw [black, very thick]    (151,578.9) .. controls (170.56,592.67) and (183.89,603.12) .. (191.67,620.9) .. controls (199.44,638.67) and (196.33,666.67) .. (166.43,681.31) ;
						\draw [black, very thick]    (110.69,688.6) .. controls (97.36,671.6) and (88.02,648.31) .. (83.02,623.64) .. controls (78.02,598.97) and (83.91,539.51) .. (152.16,579.51) ;
						\draw [black, very thick]    (125.03,708.59) .. controls (141.04,727.51) and (159.88,705.34) .. (154.3,679.12) .. controls (148.73,652.9) and (167.41,641.51) .. (180.41,639.26) ;
						\draw  [line width=1.5]  (154.19,585.58) -- (149.96,578.23) -- (157.99,579.57) ;
						\draw  [fill={rgb, 255:red, 0; green, 0; blue, 0 }  ,fill opacity=1 ][black, very thick]  (234.48,632.67) .. controls (234.48,630.18) and (236.5,628.15) .. (239,628.15) .. controls (241.5,628.15) and (243.52,630.18) .. (243.52,632.67) .. controls (243.52,635.17) and (241.5,637.19) .. (239,637.19) .. controls (236.5,637.19) and (234.48,635.17) .. (234.48,632.67) -- cycle ;
						\draw  [fill={rgb, 255:red, 0; green, 0; blue, 0 }  ,fill opacity=1 ][black, very thick]  (139.51,623.38) .. controls (139.51,620.89) and (141.54,618.86) .. (144.03,618.86) .. controls (146.53,618.86) and (148.55,620.89) .. (148.55,623.38) .. controls (148.55,625.88) and (146.53,627.9) .. (144.03,627.9) .. controls (141.54,627.9) and (139.51,625.88) .. (139.51,623.38) -- cycle ;
						\draw [black, very thick]    (205.88,633.26) .. controls (221,628.9) and (226.11,628.9) .. (239,632.67) ;
					\end{tikzpicture}
					\caption{The diagram of $K_1^{(2)}$.}
					\label{fig16}
				\end{center}
			\end{figure}		
			However, the diagram of $K_1^{(2)}$ is presented in Figure~\ref{fig16}, and $K_2^{(2)}$ is the trivial knotoid. The diagrams of $K_1^{(2)}$ and $K_2^{(2)}$ also differ by two crossing changes and two $\Omega_2$-moves, so $d_G(K_1^{(2)}, K_2^{(2)}) \le 2$. By computing the values of the three-variable transcendental invariant associated with $K_1^{(2)}$ and $K_2^{(2)}$, we obtain
			$$
			H_{K_1^{(2)}}(t, y, z) = (-2)(t + t^{-1} - 2)y, \qquad H_{K_2^{(2)}}(t, y, z) = 0.
			$$
			By Theorem~\ref{thm-H}, we conclude that $d_G(K_1^{(2)}, K_2^{(2)})\geq 2$. Thus, $d_G(K_1^{(2)}, K_2^{(2)}) = 2$. By Theorem~\ref{thm-ccov}, we obtain $d_G(K_1, K_2)\geq d_G(K_1^{(2)}, K_2^{(2)}) = 2$. We can also observe that the diagrams of $K_1$ and $K_2$ differ by two crossing changes, so $d_G(K_1, K_2) \le 2$. 
			Hence, $d_G(K_1, K_2) = d_G(K_1^{(2)}, K_2^{(2)}) = 2$.}
	\end{example}
	
	We next examine a more involved example.
	
	\begin{example} \label{ex:9} {\rm
			The diagrams and Gauss diagrams of $K_3$ and $K_4$ are illustrated in Figure~\ref{fig23}~(a) and ~(b). 
			\begin{figure}[htbp]
				\begin{center}
					\begin{subfigure}{0.34\textwidth}
						\centering
						\tikzset{every picture/.style={line width=0.75pt}} 
						\begin{tikzpicture}[x=0.75pt,y=0.75pt,yscale=-1,xscale=1]
							\draw [black, very thick]    (189,1021.17) .. controls (199.37,1032.7) and (202.79,1048.33) .. (196.16,1058.5) ;
							\draw [black, very thick]    (146.97,988.39) .. controls (158.25,994.32) and (171.2,1003.19) .. (179.62,1012.35) ;
							\draw [black, very thick]    (134.46,982.46) .. controls (126.2,979.01) and (105.07,976.47) .. (103.81,990.82) .. controls (102.54,1005.18) and (133.18,1023.83) .. (146.95,1025.65) .. controls (160.72,1027.47) and (168.22,1025.23) .. (184.81,1016.47) .. controls (201.41,1007.71) and (209.12,1000.74) .. (220.5,1003.82) .. controls (231.87,1006.89) and (255.21,1037.56) .. (240.04,1066.52) .. controls (224.87,1095.49) and (196.68,1095.27) .. (182.52,1088.09) .. controls (168.36,1080.91) and (173.59,1055.51) .. (190.59,1043.71) ;
							\draw [black, very thick]    (110.28,1014.49) .. controls (101.04,1051.2) and (65.28,1022.17) .. (87.5,981.11) .. controls (109.71,940.05) and (160.7,944.09) .. (188.47,959.3) .. controls (216.25,974.51) and (221.22,984.41) .. (223.59,995.85) .. controls (225.96,1007.29) and (219.88,1025.57) .. (205.49,1034.67) ;
							\draw [black, very thick]    (174.65,998.6) .. controls (185.69,987.59) and (184.83,973.82) .. (173.43,973.93) .. controls (162.03,974.04) and (156.4,1011.49) .. (146.54,1011.53) .. controls (136.67,1011.57) and (139.41,999.91) .. (140.93,985.64) .. controls (142.45,971.38) and (138.69,961.1) .. (130.46,962.44) .. controls (122.23,963.79) and (116.81,982.98) .. (113.71,999.21) ;
							\draw [black, very thick]    (164.39,1009.08) .. controls (139.6,1033.47) and (115.45,1044.56) .. (78.15,1041.51) ;
							\draw [black, very thick]    (64.28,1039.55) .. controls (46.05,1038.53) and (60.91,1066.01) .. (82.92,1074.22) .. controls (104.94,1082.43) and (120.71,1086.79) .. (158.01,1084.64) .. controls (195.3,1082.48) and (217.04,1066.42) .. (220.91,1058.68) .. controls (224.79,1050.94) and (226.01,1039.64) .. (219.82,1033.52) ;
							\draw [black, very thick]    (187.43,1112.84) .. controls (193.04,1111.87) and (209.63,1105.66) .. (222.95,1099.18) .. controls (236.28,1092.7) and (244.18,1084.63) .. (249.32,1075.74) .. controls (254.45,1066.85) and (259.95,1050.55) .. (250.88,1036.22) .. controls (241.8,1021.88) and (210.87,1011.49) .. (206.27,1013.4) .. controls (201.67,1015.31) and (206.05,1019.77) .. (208.37,1022.25) ;
							\draw [black, very thick]    (172.81,1116.55) .. controls (169.18,1117.69) and (160.05,1123.17) .. (166.84,1128.17) .. controls (173.63,1133.18) and (189.4,1127.82) .. (210.28,1119.92) .. controls (231.16,1112.02) and (260.99,1098.48) .. (268.56,1066.08) .. controls (276.14,1033.68) and (259.79,982.91) .. (211.99,954.59) .. controls (164.2,926.28) and (115.08,935.12) .. (96.34,949.51) .. controls (77.6,963.91) and (64.92,993.08) .. (66.7,1018.12) .. controls (68.47,1043.15) and (80.94,1049.82) .. (82.25,1065.23) ; 
							\draw [black, very thick]    (84.61,1083.98) .. controls (89.94,1111.46) and (59.5,1109.26) .. (70.46,1161.65) ;
							\draw [black, very thick]    (75.38,1177.28) .. controls (85.71,1204.05) and (114.73,1213.91) .. (142.23,1206.08) ;
							\draw [black, very thick]    (185.77,1163.29) .. controls (177.45,1160.74) and (172.23,1159.51) .. (169.7,1164.2) .. controls (167.17,1168.88) and (167.9,1199.36) .. (153.98,1201.74) ;
							\draw [black, very thick]    (196.73,1166.83) .. controls (209.52,1172) and (210.27,1174.77) .. (215.37,1182.7) .. controls (220.46,1190.62) and (227.86,1206.15) .. (220.09,1220.51) .. controls (212.32,1234.88) and (195.96,1241.09) .. (179.63,1233.59) .. controls (163.3,1226.1) and (159.86,1219.24) .. (152.65,1207.8) .. controls (145.43,1196.35) and (140.87,1188.5) .. (136.14,1191.91) .. controls (131.42,1195.32) and (118.18,1235.36) .. (108.64,1234.55) .. controls (99.09,1233.73) and (103.97,1218.33) .. (104.42,1211.68) ;
							\draw [black, very thick]    (204.91,1126.57) .. controls (215.23,1132.17) and (234.38,1142.88) .. (238.59,1151.46) .. controls (242.8,1160.05) and (243.97,1172.39) .. (223.81,1188.6) ; 
							\draw [black, very thick]    (214.82,1194.89) .. controls (208.61,1200.56) and (199.41,1200.36) .. (200.2,1193.47) .. controls (200.99,1186.58) and (216.54,1163.93) .. (212.12,1156.83) .. controls (207.69,1149.74) and (197.32,1160.48) .. (185.73,1168.82) .. controls (174.13,1177.17) and (158.12,1171.12) .. (153.95,1174.17) .. controls (149.78,1177.21) and (154.24,1185.94) .. (158.69,1190.8) ;
							\draw [black, very thick]    (165.75,1199.84) .. controls (170.05,1205.7) and (174.73,1209.2) .. (182.69,1210.88) ;
							\draw [black, very thick]    (107.1,1201.14) .. controls (107.8,1195.65) and (110.45,1185.41) .. (104.51,1184.83) .. controls (98.57,1184.24) and (77.28,1206.08) .. (69.92,1194.5) .. controls (62.56,1182.92) and (70.05,1171.95) .. (84.32,1152.79) .. controls (98.59,1133.63) and (118.3,1119.64) .. (132.55,1114.5) .. controls (146.8,1109.35) and (158.24,1108.92) .. (169.04,1111.56) .. controls (179.83,1114.21) and (183.46,1116.8) .. (192.83,1121.23) ;
							\draw  [black, very thick]  (117.81,1117.76) -- (126.25,1116.88) -- (121.76,1123.67) ;
							\draw  [fill={rgb, 255:red, 0; green, 0; blue, 0 }  ,fill opacity=1 ][black, very thick]  (191.64,1058.5) .. controls (191.64,1056) and (193.67,1053.98) .. (196.16,1053.98) .. controls (198.66,1053.98) and (200.68,1056) .. (200.68,1058.5) .. controls (200.68,1061) and (198.66,1063.02) .. (196.16,1063.02) .. controls (193.67,1063.02) and (191.64,1061) .. (191.64,1058.5) -- cycle ;
							\draw  [black, very thick]  (113.91,979.63) .. controls (113.91,977.13) and (115.93,975.11) .. (118.43,975.11) .. controls (120.92,975.11) and (122.95,977.13) .. (122.95,979.63) .. controls (122.95,982.13) and (120.92,984.15) .. (118.43,984.15) .. controls (115.93,984.15) and (113.91,982.13) .. (113.91,979.63) -- cycle ;
							\draw  [fill={rgb, 255:red, 0; green, 0; blue, 0 }  ,fill opacity=1 ][black, very thick]  (178.17,1210.88) .. controls (178.17,1208.38) and (180.19,1206.36) .. (182.69,1206.36) .. controls (185.18,1206.36) and (187.21,1208.38) .. (187.21,1210.88) .. controls (187.21,1213.37) and (185.18,1215.4) .. (182.69,1215.4) .. controls (180.19,1215.4) and (178.17,1213.37) .. (178.17,1210.88) -- cycle ; 
							\draw  [black, very thick]  (154.18,995.02) .. controls (154.18,992.52) and (156.21,990.5) .. (158.7,990.5) .. controls (161.2,990.5) and (163.22,992.52) .. (163.22,995.02) .. controls (163.22,997.52) and (161.2,999.54) .. (158.7,999.54) .. controls (156.21,999.54) and (154.18,997.52) .. (154.18,995.02) -- cycle ;
							\draw  [black, very thick]  (219.18,1005.19) .. controls (219.18,1002.69) and (221.21,1000.67) .. (223.7,1000.67) .. controls (226.2,1000.67) and (228.22,1002.69) .. (228.22,1005.19) .. controls (228.22,1007.68) and (226.2,1009.71) .. (223.7,1009.71) .. controls (221.21,1009.71) and (219.18,1007.68) .. (219.18,1005.19) -- cycle ;
							\draw  [black, very thick]  (216.13,1016.13) .. controls (216.13,1013.63) and (218.15,1011.61) .. (220.65,1011.61) .. controls (223.14,1011.61) and (225.17,1013.63) .. (225.17,1016.13) .. controls (225.17,1018.63) and (223.14,1020.65) .. (220.65,1020.65) .. controls (218.15,1020.65) and (216.13,1018.63) .. (216.13,1016.13) -- cycle ;
							\draw  [black, very thick]  (235.57,1025.91) .. controls (235.57,1023.41) and (237.6,1021.39) .. (240.09,1021.39) .. controls (242.59,1021.39) and (244.61,1023.41) .. (244.61,1025.91) .. controls (244.61,1028.4) and (242.59,1030.43) .. (240.09,1030.43) .. controls (237.6,1030.43) and (235.57,1028.4) .. (235.57,1025.91) -- cycle ;
							\draw  [black, very thick]  (163.57,1173.3) .. controls (163.57,1170.8) and (165.6,1168.78) .. (168.09,1168.78) .. controls (170.59,1168.78) and (172.61,1170.8) .. (172.61,1173.3) .. controls (172.61,1175.79) and (170.59,1177.82) .. (168.09,1177.82) .. controls (165.6,1177.82) and (163.57,1175.79) .. (163.57,1173.3) -- cycle ;
							\draw  [black, very thick]  (204.46,1173.63) .. controls (204.46,1171.13) and (206.48,1169.11) .. (208.98,1169.11) .. controls (211.48,1169.11) and (213.5,1171.13) .. (213.5,1173.63) .. controls (213.5,1176.13) and (211.48,1178.15) .. (208.98,1178.15) .. controls (206.48,1178.15) and (204.46,1176.13) .. (204.46,1173.63) -- cycle ;
							\draw  [black, very thick]  (82.24,1194.13) .. controls (82.24,1191.63) and (84.26,1189.61) .. (86.76,1189.61) .. controls (89.25,1189.61) and (91.28,1191.63) .. (91.28,1194.13) .. controls (91.28,1196.63) and (89.25,1198.65) .. (86.76,1198.65) .. controls (84.26,1198.65) and (82.24,1196.63) .. (82.24,1194.13) -- cycle ;
							\draw  [black, very thick]  (122.57,1208.07) .. controls (122.57,1205.58) and (124.6,1203.55) .. (127.09,1203.55) .. controls (129.59,1203.55) and (131.61,1205.58) .. (131.61,1208.07) .. controls (131.61,1210.57) and (129.59,1212.59) .. (127.09,1212.59) .. controls (124.6,1212.59) and (122.57,1210.57) .. (122.57,1208.07) -- cycle ;
							\draw  [black, very thick]  (172.29,1082.19) .. controls (172.29,1079.69) and (174.32,1077.67) .. (176.81,1077.67) .. controls (179.31,1077.67) and (181.33,1079.69) .. (181.33,1082.19) .. controls (181.33,1084.68) and (179.31,1086.71) .. (176.81,1086.71) .. controls (174.32,1086.71) and (172.29,1084.68) .. (172.29,1082.19) -- cycle ;
							\draw  [black, very thick]  (140.46,1025.3) .. controls (140.46,1022.8) and (142.48,1020.78) .. (144.98,1020.78) .. controls (147.48,1020.78) and (149.5,1022.8) .. (149.5,1025.3) .. controls (149.5,1027.79) and (147.48,1029.82) .. (144.98,1029.82) .. controls (142.48,1029.82) and (140.46,1027.79) .. (140.46,1025.3) -- cycle ;
							\draw (145,1250) node [anchor=north west][inner sep=0.75pt] {$K_3$};
						\end{tikzpicture}
					\end{subfigure}
					\qquad
					\begin{subfigure}{0.34\textwidth}
						\centering
						\tikzset{every picture/.style={line width=0.75pt}} 
						\begin{tikzpicture}[x=0.75pt,y=0.75pt,yscale=-1,xscale=1]
							\draw [black, very thick]    (470.41,1117.04) .. controls (481.24,1113.16) and (508.63,1105.65) .. (521.95,1099.17) .. controls (535.28,1092.69) and (543.18,1084.62) .. (548.32,1075.73) .. controls (553.45,1066.84) and (558.95,1050.55) .. (549.88,1036.21) .. controls (540.8,1021.88) and (509.87,1011.48) .. (505.27,1013.39) .. controls (500.67,1015.3) and (505.05,1019.77) .. (507.37,1022.25) ;
							\draw [black, very thick]    (488,1021.17) .. controls (498.37,1032.69) and (501.79,1048.32) .. (495.16,1058.49) ;
							\draw [black, very thick]    (445.97,988.38) .. controls (457.25,994.32) and (470.2,1003.18) .. (478.62,1012.35) ;
							\draw [black, very thick]    (433.46,982.45) .. controls (425.2,979) and (404.07,976.47) .. (402.81,990.82) .. controls (401.54,1005.17) and (432.18,1023.83) .. (445.95,1025.64) .. controls (459.72,1027.46) and (467.22,1025.23) .. (483.81,1016.47) .. controls (500.41,1007.71) and (508.12,1000.74) .. (519.5,1003.81) .. controls (530.87,1006.89) and (554.21,1037.55) .. (539.04,1066.52) .. controls (523.87,1095.48) and (495.68,1095.27) .. (481.52,1088.08) .. controls (467.36,1080.9) and (472.59,1055.5) .. (489.59,1043.71) ;
							\draw [black, very thick]    (409.28,1014.48) .. controls (400.04,1051.2) and (364.28,1022.17) .. (386.5,981.1) .. controls (408.71,940.04) and (459.7,944.09) .. (487.47,959.3) .. controls (515.25,974.51) and (520.22,984.41) .. (522.59,995.84) .. controls (524.96,1007.28) and (518.88,1025.56) .. (504.49,1034.66) ;
							\draw [black, very thick]    (473.65,998.59) .. controls (484.69,987.59) and (483.83,973.81) .. (472.43,973.93) .. controls (461.03,974.04) and (455.4,1011.48) .. (445.54,1011.52) .. controls (435.67,1011.56) and (438.41,999.9) .. (439.93,985.64) .. controls (441.45,971.37) and (437.69,961.09) .. (429.46,962.44) .. controls (421.23,963.78) and (415.81,982.98) .. (412.71,999.21) ;
							\draw [black, very thick]    (463.39,1009.08) .. controls (438.6,1033.47) and (414.45,1044.55) .. (377.15,1041.5) ;
							\draw [black, very thick]    (363.28,1039.55) .. controls (347.09,1038.64) and (357,1060.21) .. (374.84,1070.8) ;
							\draw [black, very thick]    (471.81,1116.54) .. controls (468.18,1117.68) and (459.05,1123.16) .. (465.84,1128.17) .. controls (472.63,1133.17) and (488.4,1127.81) .. (509.28,1119.91) .. controls (530.16,1112.01) and (559.99,1098.48) .. (567.56,1066.07) .. controls (575.14,1033.67) and (558.79,982.9) .. (510.99,954.59) .. controls (463.2,926.27) and (414.08,935.11) .. (395.34,949.51) .. controls (376.6,963.9) and (363.92,993.08) .. (365.7,1018.11) .. controls (367.47,1043.14) and (379.94,1049.81) .. (381.25,1065.22) ;
							\draw [black, very thick]    (381,1063.08) .. controls (389.25,1118.95) and (358.5,1109.25) .. (369.46,1161.65) ;
							\draw [black, very thick]    (374.38,1177.28) .. controls (384.71,1204.04) and (413.73,1213.9) .. (441.23,1206.07) ;
							\draw [black, very thick]    (484.77,1163.28) .. controls (476.45,1160.73) and (471.23,1159.5) .. (468.7,1164.19) .. controls (466.17,1168.88) and (466.9,1199.35) .. (452.98,1201.73) ;
							\draw [black, very thick]    (495.73,1166.83) .. controls (508.52,1171.99) and (509.27,1174.77) .. (514.37,1182.69) .. controls (519.46,1190.62) and (526.86,1206.15) .. (519.09,1220.51) .. controls (511.32,1234.87) and (494.96,1241.08) .. (478.63,1233.59) .. controls (462.3,1226.09) and (458.86,1219.24) .. (451.65,1207.79) .. controls (444.43,1196.35) and (439.87,1188.5) .. (435.14,1191.91) .. controls (430.42,1195.32) and (417.18,1235.36) .. (407.64,1234.54) .. controls (398.09,1233.73) and (402.97,1218.32) .. (403.42,1211.67) ;
							\draw [black, very thick]    (503.91,1126.57) .. controls (514.23,1132.17) and (533.38,1142.88) .. (537.59,1151.46) .. controls (541.8,1160.04) and (542.97,1172.38) .. (522.81,1188.6) ;
							\draw [black, very thick]    (513.82,1194.88) .. controls (507.61,1200.55) and (498.41,1200.35) .. (499.2,1193.46) .. controls (499.99,1186.57) and (515.54,1163.92) .. (511.12,1156.83) .. controls (506.69,1149.73) and (496.32,1160.47) .. (484.73,1168.82) .. controls (473.13,1177.16) and (457.12,1171.11) .. (452.95,1174.16) .. controls (448.78,1177.21) and (453.24,1185.93) .. (457.69,1190.79) ;
							\draw [black, very thick]    (464.75,1199.83) .. controls (469.05,1205.69) and (473.73,1209.19) .. (481.69,1210.87) ;
							\draw [black, very thick]    (406.1,1201.13) .. controls (406.8,1195.64) and (409.45,1185.41) .. (403.51,1184.82) .. controls (397.57,1184.23) and (376.28,1206.08) .. (368.92,1194.5) .. controls (361.56,1182.91) and (369.05,1171.95) .. (383.32,1152.78) .. controls (397.59,1133.62) and (417.3,1119.63) .. (431.55,1114.49) .. controls (445.8,1109.34) and (459.17,1108.95) .. (471,1112.29) ;
							\draw [black, very thick]    (390.47,1077.29) .. controls (399.2,1081.45) and (424.69,1086.49) .. (457.01,1084.63) .. controls (489.32,1082.77) and (515.2,1069.05) .. (519.91,1058.67) .. controls (524.62,1048.3) and (525.01,1039.63) .. (518.82,1033.52) ;
							\draw [black, very thick]    (483,1116.95) .. controls (488.37,1119.62) and (489.4,1120.08) .. (491.83,1121.23) ;
							\draw  [black, very thick]  (416.69,1117.75) -- (425.15,1117.21) -- (420.4,1123.81) ;
							\draw  [black, very thick]  (412.91,979.78) .. controls (412.91,977.28) and (414.93,975.26) .. (417.43,975.26) .. controls (419.93,975.26) and (421.95,977.28) .. (421.95,979.78) .. controls (421.95,982.27) and (419.93,984.3) .. (417.43,984.3) .. controls (414.93,984.3) and (412.91,982.27) .. (412.91,979.78) -- cycle ;
							\draw  [black, very thick]  (453,995.19) .. controls (453,992.69) and (455.02,990.67) .. (457.52,990.67) .. controls (460.02,990.67) and (462.04,992.69) .. (462.04,995.19) .. controls (462.04,997.68) and (460.02,999.71) .. (457.52,999.71) .. controls (455.02,999.71) and (453,997.68) .. (453,995.19) -- cycle ;
							\draw  [black, very thick]  (439.68,1025.32) .. controls (439.68,1022.83) and (441.71,1020.8) .. (444.2,1020.8) .. controls (446.7,1020.8) and (448.72,1022.83) .. (448.72,1025.32) .. controls (448.72,1027.82) and (446.7,1029.84) .. (444.2,1029.84) .. controls (441.71,1029.84) and (439.68,1027.82) .. (439.68,1025.32) -- cycle ;
							\draw  [black, very thick]  (518.36,1005.37) .. controls (518.36,1002.87) and (520.39,1000.85) .. (522.88,1000.85) .. controls (525.38,1000.85) and (527.4,1002.87) .. (527.4,1005.37) .. controls (527.4,1007.86) and (525.38,1009.89) .. (522.88,1009.89) .. controls (520.39,1009.89) and (518.36,1007.86) .. (518.36,1005.37) -- cycle ;
							\draw  [black, very thick]  (534.86,1025.82) .. controls (534.86,1023.33) and (536.89,1021.3) .. (539.38,1021.3) .. controls (541.88,1021.3) and (543.9,1023.33) .. (543.9,1025.82) .. controls (543.9,1028.32) and (541.88,1030.34) .. (539.38,1030.34) .. controls (536.89,1030.34) and (534.86,1028.32) .. (534.86,1025.82) -- cycle ;
							\draw  [black, very thick]  (515.18,1016.37) .. controls (515.18,1013.87) and (517.21,1011.85) .. (519.7,1011.85) .. controls (522.2,1011.85) and (524.22,1013.87) .. (524.22,1016.37) .. controls (524.22,1018.86) and (522.2,1020.89) .. (519.7,1020.89) .. controls (517.21,1020.89) and (515.18,1018.86) .. (515.18,1016.37) -- cycle ;
							\draw  [black, very thick]  (471.05,1082.09) .. controls (471.05,1079.6) and (473.07,1077.57) .. (475.57,1077.57) .. controls (478.06,1077.57) and (480.09,1079.6) .. (480.09,1082.09) .. controls (480.09,1084.59) and (478.06,1086.61) .. (475.57,1086.61) .. controls (473.07,1086.61) and (471.05,1084.59) .. (471.05,1082.09) -- cycle ;
							\draw  [black, very thick]  (381.18,1194.42) .. controls (381.18,1191.92) and (383.21,1189.9) .. (385.7,1189.9) .. controls (388.2,1189.9) and (390.22,1191.92) .. (390.22,1194.42) .. controls (390.22,1196.92) and (388.2,1198.94) .. (385.7,1198.94) .. controls (383.21,1198.94) and (381.18,1196.92) .. (381.18,1194.42) -- cycle ;
							\draw  [black, very thick]  (421.72,1208.65) .. controls (421.72,1206.16) and (423.74,1204.13) .. (426.24,1204.13) .. controls (428.73,1204.13) and (430.76,1206.16) .. (430.76,1208.65) .. controls (430.76,1211.15) and (428.73,1213.17) .. (426.24,1213.17) .. controls (423.74,1213.17) and (421.72,1211.15) .. (421.72,1208.65) -- cycle ;
							\draw  [black, very thick]  (462.62,1173.32) .. controls (462.62,1170.82) and (464.64,1168.8) .. (467.14,1168.8) .. controls (469.63,1168.8) and (471.66,1170.82) .. (471.66,1173.32) .. controls (471.66,1175.82) and (469.63,1177.84) .. (467.14,1177.84) .. controls (464.64,1177.84) and (462.62,1175.82) .. (462.62,1173.32) -- cycle ;
							\draw  [black, very thick]  (503.25,1173.75) .. controls (503.25,1171.26) and (505.27,1169.23) .. (507.77,1169.23) .. controls (510.27,1169.23) and (512.29,1171.26) .. (512.29,1173.75) .. controls (512.29,1176.25) and (510.27,1178.27) .. (507.77,1178.27) .. controls (505.27,1178.27) and (503.25,1176.25) .. (503.25,1173.75) -- cycle ;
							\draw  [fill={rgb, 255:red, 0; green, 0; blue, 0 }  ,fill opacity=1 ][black, very thick]  (490.64,1058.49) .. controls (490.64,1056) and (492.67,1053.97) .. (495.16,1053.97) .. controls (497.66,1053.97) and (499.68,1056) .. (499.68,1058.49) .. controls (499.68,1060.99) and (497.66,1063.01) .. (495.16,1063.01) .. controls (492.67,1063.01) and (490.64,1060.99) .. (490.64,1058.49) -- cycle ;
							\draw  [fill={rgb, 255:red, 0; green, 0; blue, 0 }  ,fill opacity=1 ][black, very thick]  (477.17,1210.87) .. controls (477.17,1208.38) and (479.19,1206.35) .. (481.69,1206.35) .. controls (484.18,1206.35) and (486.21,1208.38) .. (486.21,1210.87) .. controls (486.21,1213.37) and (484.18,1215.39) .. (481.69,1215.39) .. controls (479.19,1215.39) and (477.17,1213.37) .. (477.17,1210.87) -- cycle ;
							\draw (450,1250) node [anchor=north west][inner sep=0.75pt] {$K_4$};
						\end{tikzpicture}
					\end{subfigure}
					\begin{subfigure}{0.34\textwidth}
						\centering
						\tikzset{every picture/.style={line width=0.75pt}} 
						\begin{tikzpicture}[x=0.75pt,y=0.75pt,yscale=-1,xscale=1]
							\draw  [draw opacity=0][black, very thick]  (460.94,764.41) .. controls (453.54,806.66) and (416.67,838.78) .. (372.3,838.78) .. controls (322.59,838.78) and (282.3,798.48) .. (282.3,748.78) .. controls (282.3,699.07) and (322.59,658.78) .. (372.3,658.78) .. controls (416.67,658.78) and (453.54,690.89) .. (460.94,733.15) -- (372.3,748.78) -- cycle ; \draw  [line width=1.5]  (460.94,764.41) .. controls (453.54,806.66) and (416.67,838.78) .. (372.3,838.78) .. controls (322.59,838.78) and (282.3,798.48) .. (282.3,748.78) .. controls (282.3,699.07) and (322.59,658.78) .. (372.3,658.78) .. controls (416.67,658.78) and (453.54,690.89) .. (460.94,733.15) ;
							\draw  [fill={rgb, 255:red, 0; green, 0; blue, 0 }  ,fill opacity=1 ][black, very thick]  (456.42,764.41) .. controls (456.42,761.91) and (458.45,759.89) .. (460.94,759.89) .. controls (463.44,759.89) and (465.46,761.91) .. (465.46,764.41) .. controls (465.46,766.9) and (463.44,768.93) .. (460.94,768.93) .. controls (458.45,768.93) and (456.42,766.9) .. (456.42,764.41) -- cycle ;
							\draw  [fill={rgb, 255:red, 0; green, 0; blue, 0 }  ,fill opacity=1 ][black, very thick]  (456.42,733.15) .. controls (456.42,730.65) and (458.45,728.63) .. (460.94,728.63) .. controls (463.44,728.63) and (465.46,730.65) .. (465.46,733.15) .. controls (465.46,735.64) and (463.44,737.67) .. (460.94,737.67) .. controls (458.45,737.67) and (456.42,735.64) .. (456.42,733.15) -- cycle ;
							\draw [black, very thick, ->]    (380.79,659.1) -- (451.37,706.24) ;
							\draw [black, very thick, ->]    (457.37,719.67) -- (366.22,659.38) ;
							\draw [black, very thick, ->]    (443.37,693.38) -- (313.08,681.35) ;
							\draw [black, very thick, ->]    (325.11,672.34) -- (433.92,682.98) ;
							\draw [black, very thick, ->]    (422.78,674.26) -- (337.35,665.69) ;
							\draw [black, very thick, ->]    (356.76,660.33) -- (284.44,729.19) ;
							\draw [black, very thick, ->]    (287.3,777.79) -- (305.3,688.94) ;
							\draw [red, very thick, ->]    (292.48,707.07) -- (295.02,794.65) ;
							\draw [red, very thick, ->]    (415.3,828.11) -- (283.02,737.83) ;
							\draw [black, very thick, ->]    (282.76,751.82) -- (431.87,816.11) ;
							\draw [black, very thick, ->]    (404.02,833.22) -- (298.6,800.43) ;
							\draw [black, very thick, ->]    (307.88,811.53) -- (390.44,837.03) ;
							\draw [black, very thick, ->]    (376.05,838.61) -- (317.74,820.3) ;
							\draw [black, very thick, ->]    (331.8,829.43) -- (458.6,775.23) ;
							\draw [black, very thick, ->]    (454.48,785.65) -- (343.8,834.03) ;
							\draw [black, very thick, ->]    (358.63,837.96) -- (446.17,800.42) ;
							\draw (453.67,695.65) node [anchor=north west][inner sep=0.75pt]  [font=\large]  {$c_{2}$};
							\draw (294.42,666.94) node [anchor=north west][inner sep=0.75pt]  [font=\large]  {$c_{3}$};
							\draw (436.33,672.78) node [anchor=north west][inner sep=0.75pt]  [font=\large]  {$c_{4}$};
							\draw (460.56,714.09) node [anchor=north west][inner sep=0.75pt]  [font=\scriptsize]  {$-$};
							\draw (348.03,646.56) node [anchor=north west][inner sep=0.75pt]  [font=\scriptsize]  {$+$};
							\draw (277.61,698.77) node [anchor=north west][inner sep=0.75pt]  [font=\scriptsize]  {$-$};
							\draw (309.92,664.1) node [anchor=north west][inner sep=0.75pt]  [font=\scriptsize]  {$-$};
							\draw (324.35,648.97) node [anchor=north west][inner sep=0.75pt]  [font=\large]  {$c_{5}$};
							\draw (356.92,642.52) node [anchor=north west][inner sep=0.75pt]  [font=\large]  {$c_{1}$};
							\draw (445.77,686.53) node [anchor=north west][inner sep=0.75pt]  [font=\scriptsize]  {$+$};
							\draw (286.7,675.48) node [anchor=north west][inner sep=0.75pt]  [font=\large]  {$c_{7}$};
							\draw (425.16,665.73) node [anchor=north west][inner sep=0.75pt]  [font=\scriptsize]  {$+$};
							\draw (273.1,786.6) node [anchor=north west][inner sep=0.75pt]  [font=\large]  {$c_{8}$};
							\draw (376.74,645.38) node [anchor=north west][inner sep=0.75pt]  [font=\scriptsize]  {$+$};
							\draw (265.1,717.28) node [anchor=north west][inner sep=0.75pt]  [font=\large]  {$c_{6}$};
							\draw (262.64,729.28) node [anchor=north west][inner sep=0.75pt]  [font=\large]  {$c_{9}$};
							\draw (435.5,815.68) node [anchor=north west][inner sep=0.75pt]  [font=\large]  {$c_{10}$};
							\draw (274.1,796.48) node [anchor=north west][inner sep=0.75pt]  [font=\large]  {$c_{11}$};
							\draw (384.55,842.48) node [anchor=north west][inner sep=0.75pt]  [font=\large]  {$c_{12}$};
							\draw (290.7,818.08) node [anchor=north west][inner sep=0.75pt]  [font=\large]  {$c_{13}$};
							\draw (460.94,770.93) node [anchor=north west][inner sep=0.75pt]  [font=\large]  {$c_{14}$};
							\draw (325.5,836.48) node [anchor=north west][inner sep=0.75pt]  [font=\large]  {$c_{15}$};
							\draw (451.1,796.88) node [anchor=north west][inner sep=0.75pt]  [font=\large]  {$c_{16}$};
							\draw (290.89,808.66) node [anchor=north west][inner sep=0.75pt]  [font=\scriptsize]  {$+$};
							\draw (271.01,773.97) node [anchor=north west][inner sep=0.75pt]  [font=\scriptsize]  {$-$};
							\draw (418.21,829.37) node [anchor=north west][inner sep=0.75pt]  [font=\scriptsize]  {$-$};
							\draw (266.01,746.72) node [anchor=north west][inner sep=0.75pt]  [font=\scriptsize]  {$-$};
							\draw (407.81,833.73) node [anchor=north west][inner sep=0.75pt]  [font=\scriptsize]  {$-$};
							\draw (370.61,841.93) node [anchor=north west][inner sep=0.75pt]  [font=\scriptsize]  {$-$};
							\draw (318.41,830.93) node [anchor=north west][inner sep=0.75pt]  [font=\scriptsize]  {$-$};
							\draw (350.21,840.73) node [anchor=north west][inner sep=0.75pt]  [font=\scriptsize]  {$-$};
							\draw (458.69,782.46) node [anchor=north west][inner sep=0.75pt]  [font=\scriptsize]  {$+$};
							\draw (360,860) node [anchor=north west][inner sep=0.75pt]    {(a)};
						\end{tikzpicture}						
					\end{subfigure}
					\qquad 
					\begin{subfigure}{0.34\textwidth}
						\centering
						\tikzset{every picture/.style={line width=0.75pt}} 
						\begin{tikzpicture}[x=0.75pt,y=0.75pt,yscale=-1,xscale=1]
							\draw  [draw opacity=0][black, very thick]  (460.94,764.41) .. controls (453.54,806.66) and (416.67,838.78) .. (372.3,838.78) .. controls (322.59,838.78) and (282.3,798.48) .. (282.3,748.78) .. controls (282.3,699.07) and (322.59,658.78) .. (372.3,658.78) .. controls (416.67,658.78) and (453.54,690.89) .. (460.94,733.15) -- (372.3,748.78) -- cycle ; \draw  [line width=1.5]  (460.94,764.41) .. controls (453.54,806.66) and (416.67,838.78) .. (372.3,838.78) .. controls (322.59,838.78) and (282.3,798.48) .. (282.3,748.78) .. controls (282.3,699.07) and (322.59,658.78) .. (372.3,658.78) .. controls (416.67,658.78) and (453.54,690.89) .. (460.94,733.15) ;
							\draw  [fill={rgb, 255:red, 0; green, 0; blue, 0 }  ,fill opacity=1 ][black, very thick]  (456.42,764.41) .. controls (456.42,761.91) and (458.45,759.89) .. (460.94,759.89) .. controls (463.44,759.89) and (465.46,761.91) .. (465.46,764.41) .. controls (465.46,766.9) and (463.44,768.93) .. (460.94,768.93) .. controls (458.45,768.93) and (456.42,766.9) .. (456.42,764.41) -- cycle ;
							\draw  [fill={rgb, 255:red, 0; green, 0; blue, 0 }  ,fill opacity=1 ][black, very thick]  (456.42,733.15) .. controls (456.42,730.65) and (458.45,728.63) .. (460.94,728.63) .. controls (463.44,728.63) and (465.46,730.65) .. (465.46,733.15) .. controls (465.46,735.64) and (463.44,737.67) .. (460.94,737.67) .. controls (458.45,737.67) and (456.42,735.64) .. (456.42,733.15) -- cycle ;
							\draw [black, very thick, ->]    (380.79,659.1) -- (451.37,706.24) ;
							\draw [black, very thick, ->]    (457.37,719.67) -- (366.22,659.38) ;
							\draw [black, very thick, ->]    (443.37,693.38) -- (313.08,681.35) ;
							\draw [black, very thick, ->]    (325.11,672.34) -- (433.92,682.98) ;
							\draw [black, very thick, ->]    (422.78,674.26) -- (337.35,665.69) ;
							\draw [black, very thick, ->]    (356.76,660.33) -- (284.44,729.19) ;
							\draw [black, very thick, ->]    (287.3,777.79) -- (305.3,688.94) ;
							\draw [red, very thick, <-]    (292.48,707.07) -- (295.02,794.65) ;
							\draw [red, very thick, <-]    (415.3,828.11) -- (283.02,737.83) ;
							\draw [black, very thick, ->]    (282.76,751.82) -- (431.87,816.11) ; 
							\draw [black, very thick, ->]    (404.02,833.22) -- (298.6,800.43) ;
							\draw [black, very thick, ->]    (307.88,811.53) -- (390.44,837.03) ;
							\draw [black, very thick, ->]    (376.05,838.61) -- (317.74,820.3) ;
							\draw [black, very thick, ->]    (331.8,829.43) -- (458.6,775.23) ;
							\draw [black, very thick, ->]    (454.48,785.65) -- (343.8,834.03) ;
							\draw [black, very thick, ->]    (358.63,837.96) -- (446.17,800.42) ;
							\draw (456,696) node [anchor=north west][inner sep=0.75pt]  [font=\large]  {$c_{2}'$};
							\draw (292,661) node [anchor=north west][inner sep=0.75pt]  [font=\large]  {$c_{3}'$};
							\draw (438,668) node [anchor=north west][inner sep=0.75pt]  [font=\large]  {$c_{4}'$};
							\draw (460.56,714.09) node [anchor=north west][inner sep=0.75pt]  [font=\scriptsize]  {$-$};
							\draw (348.03,646.56) node [anchor=north west][inner sep=0.75pt]  [font=\scriptsize]  {$+$};
							\draw (273,688) node [anchor=north west][inner sep=0.75pt]  [font=\large]  {$c_{8}'$};
							\draw (309.92,664.1) node [anchor=north west][inner sep=0.75pt]  [font=\scriptsize]  {$-$};
							\draw (318,645) node [anchor=north west][inner sep=0.75pt]  [font=\large]  {$c_{5}'$};
							\draw (356.92,636) node [anchor=north west][inner sep=0.75pt]  [font=\large]  {$c_{1}'$};
							\draw (445.77,686.53) node [anchor=north west][inner sep=0.75pt]  [font=\scriptsize]  {$+$};
							\draw (284,675.48) node [anchor=north west][inner sep=0.75pt]  [font=\large]  {$c_{7}'$};
							\draw (425.16,665.73) node [anchor=north west][inner sep=0.75pt]  [font=\scriptsize]  {$+$};
							\draw (282,792) node [anchor=north west][inner sep=0.75pt]  [font=\scriptsize]  {$+$};
							\draw (376.74,645.38) node [anchor=north west][inner sep=0.75pt]  [font=\scriptsize]  {$+$};
							\draw (266,716) node [anchor=north west][inner sep=0.75pt]  [font=\large]  {$c_{6}'$};
							\draw (267,732) node [anchor=north west][inner sep=0.75pt]  [font=\scriptsize]  {$+$};
							\draw (434,810) node [anchor=north west][inner sep=0.75pt]  [font=\large]  {$c_{10}'$};
							\draw (270,792) node [anchor=north west][inner sep=0.75pt]  [font=\large]  {$c_{11}'$};
							\draw (390,836) node [anchor=north west][inner sep=0.75pt]  [font=\large]  {$c_{12}'$};
							\draw (292,815) node [anchor=north west][inner sep=0.75pt]  [font=\large]  {$c_{13}'$};
							\draw (461,763) node [anchor=north west][inner sep=0.75pt]  [font=\large]  {$c_{14}'$};
							\draw (324,835) node [anchor=north west][inner sep=0.75pt]  [font=\large]  {$c_{15}'$};
							\draw (450,790) node [anchor=north west][inner sep=0.75pt]  [font=\large]  {$c_{16}'$};
							\draw (290.89,808.66) node [anchor=north west][inner sep=0.75pt]  [font=\scriptsize]  {$+$};
							\draw (271.01,773.97) node [anchor=north west][inner sep=0.75pt]  [font=\scriptsize]  {$-$};
							\draw (418.21,824) node [anchor=north west][inner sep=0.75pt]  [font=\large]  {$c_{9}'$};
							\draw (266.01,746.72) node [anchor=north west][inner sep=0.75pt]  [font=\scriptsize]  {$-$};
							\draw (407.81,833.73) node [anchor=north west][inner sep=0.75pt]  [font=\scriptsize]  {$-$};
							\draw (370.61,841.93) node [anchor=north west][inner sep=0.75pt]  [font=\scriptsize]  {$-$};
							\draw (318.41,830.93) node [anchor=north west][inner sep=0.75pt]  [font=\scriptsize]  {$-$};
							\draw (350.21,840.73) node [anchor=north west][inner sep=0.75pt]  [font=\scriptsize]  {$-$};
							\draw (458.69,782.46) node [anchor=north west][inner sep=0.75pt]  [font=\scriptsize]  {$+$};
							\draw (367,860) node [anchor=north west][inner sep=0.75pt]    {(b)};
						\end{tikzpicture}			
					\end{subfigure}		
					\caption{Diagrams and Gauss diagrams of planar virtual knotoids $K_3$ and $K_4$.}
					\label{fig23}
				\end{center}
			\end{figure} 	
			From the Gauss diagrams of these two planar virtual knotoids, we can get the index of each chord of the Gauss diagram of $K_3$,
			$$
			\Ind(c_1)=-3,\ \Ind(c_2)=3,\ \Ind(c_3)=-3,\ \Ind(c_4)=3,\ \Ind(c_5)=-3, \ \Ind(c_6)=3, 
			$$
			$$
			\Ind(c_7)=-2, \ \Ind(c_8)=2,\ \Ind(c_9)=-4,\ \Ind(c_{10})=4, \, \Ind(c_{11})=-3,\ \Ind(c_{12})=3,  
			$$
			$$
			\Ind(c_{13})=-3, \, \Ind(c_{14})=3,\ \Ind(c_{15})=-3, \Ind(c_{16})=3;
			$$
			and the indices of all chords in the Gauss diagrams of $K_4$,
			$$
			\Ind(c_1')=-3,\ \Ind(c_2')=3,\ \Ind(c_3')=-3,\ \Ind(c_4')=3,\ \Ind(c_5')=-3, \ \Ind(c_6')=3,
			$$
			$$ 
			\Ind(c_7')=-2,\ \Ind(c_8')=-2,\ \Ind(c_9')=4,\ \Ind(c_{10}')=4, \ \Ind(c_{11}')=-3,\ \Ind(c_{12}')=3, 
			$$ 
			$$				
			\Ind(c_{13}')=-3,\ \Ind(c_{14}')=3,\ \Ind(c_{15}')=-3, \ \Ind(c_{16}')=3.
			$$ 
			From this we observe that the maximum absolute value of the chord indices for this pair of knotoids is $4$. Thus, for any integer $r>4$, the $r$-covering virtualizes all crossings in the diagrams of $K_3$ and $K_4$, so both $K_3^{(r)}$ and $K_4^{(r)}$ are trivial knotoids, which yields $d_G(K_3^{(r)}, K_4^{(r)}) = 0$. We now consider the cases $r=1,2,3,4$. 
			
			When $r = 1$, the $1$-covering virtualizes no crossings in the diagrams of $K_3$ and $K_4$, so \linebreak $d_G(K_3^{(1)}, K_4^{(1)}) = d_G(K_3, K_4)$. 
			
			When $r = 2$, for the $2$-covering, it is noted that the diagram of $K_3^{(2)}$ is also shown in Figure~\ref{fig16}, and $K_4^{(2)}$ is the trivial knotoid. According to Examples~\ref{ex:3}, we have $d_G(K_3^{(2)}, K_4^{(2)}) = 2$. 
			
			When $r=3$, the $3$-covering virtualizes all the distinct crossings between the diagrams of $K_3$ and $K_4$, so $K_3^{(3)}=K_4^{(3)}$, that is, $d_G(K_3^{(3)}, K_4^{(3)}) = 0$.
			
			When $r = 4$, we consider the $4$-covering and can find that the diagram of $K_3^{(4)}$ is given in Figure~\ref{fig20}, and $K_4^{(4)}$ is the trivial knotoid. Although the diagrams of $K_3^{(4)}$ and $K_4^{(4)}$ also differ by a crossing change and an $\Omega_2$-move, a direct computation of their normalized bracket polynomials yields
			$$
			\langle K_3^{(4)}\rangle_\circ = A^{4} + A^{6} - A^{10}, \qquad \langle K_4^{(4)}\rangle_\circ = 1, 
			$$
			which implies that $K_3^{(4)}$ and $K_4^{(4)}$ are homotopic yet non-equivalent. Hence, $d_G(K_3^{(4)}, K_4^{(4)})\geq 1$. Therefore, we get $d_G(K_3^{(4)}, K_4^{(4)}) = 1$.				
			\begin{figure}[htbp]
				\begin{center}
					\tikzset{every picture/.style={line width=0.7pt}} 
					\begin{tikzpicture}[x=0.75pt,y=0.75pt,yscale=-0.8,xscale=0.8]
						\draw [black, very thick]    (584.05,784.11) .. controls (598.36,803.66) and (599.13,807.69) .. (593.75,813.69) .. controls (588.38,819.69) and (573,828.1) .. (557,825.4) .. controls (541,822.69) and (525,800.4) .. (536.67,759.06) .. controls (548.33,717.73) and (577.82,712.71) .. (594.52,720.11) .. controls (611.22,727.51) and (612.02,749.74) .. (612.06,756.59) ;
						\draw [black, very thick]    (612,755.73) .. controls (612.83,764.56) and (610.07,776.71) .. (599.62,786.8) ;
						\draw [black, very thick]    (585,785.4) .. controls (572.64,770.21) and (582.83,756.42) .. (599.7,747.26) ;
						\draw  [black, very thick]  (545.23,747.07) -- (538.65,752.41) -- (538.7,744.27) ;
						\draw  [fill={rgb, 255:red, 0; green, 0; blue, 0 }  ,fill opacity=1 ][black, very thick]  (644.81,736.06) .. controls (644.81,733.57) and (646.84,731.54) .. (649.33,731.54) .. controls (651.83,731.54) and (653.85,733.57) .. (653.85,736.06) .. controls (653.85,738.56) and (651.83,740.58) .. (649.33,740.58) .. controls (646.84,740.58) and (644.81,738.56) .. (644.81,736.06) -- cycle ;
						\draw  [fill={rgb, 255:red, 0; green, 0; blue, 0 }  ,fill opacity=1 ][black, very thick]  (556.81,803.73) .. controls (556.81,801.23) and (558.84,799.21) .. (561.33,799.21) .. controls (563.83,799.21) and (565.85,801.23) .. (565.85,803.73) .. controls (565.85,806.23) and (563.83,808.25) .. (561.33,808.25) .. controls (558.84,808.25) and (556.81,806.23) .. (556.81,803.73) -- cycle ;
						\draw [black, very thick]    (582.36,798.45) .. controls (576.07,801.29) and (569,803.2) .. (561.33,803.73) ;
						\draw [black, very thick]    (620.04,739.25) .. controls (630.31,736.45) and (640.83,735.23) .. (649.33,736.06) ;
					\end{tikzpicture}		
					\caption{The diagram of $K_3^{(4)}$.}
					\label{fig20}
				\end{center}
			\end{figure}
			
			To sum up,  the $r$-coverings yield $d_G(K_3, K_4) \geq 2$, and we can also observe that the diagrams of $K_3$ and $K_4$ differ by two crossing changes, so $d_G(K_3, K_4) \le 2$.  Therefore, $d_G(K_3, K_4) = d_G(K_3^{(2)}, K_4^{(2)}) = 2$.
		}
	\end{example}
	
	Nevertheless, the precise Gordian distance cannot always be computed via the $r$-coverings, as illustrated by the following example.
	
	\begin{example} \label{ex:8} {\rm
			Let $K_5$ and $K_{6}$ be two planar virtual knotoids whose diagrams and Gauss diagrams are shown in Figure~\ref{fig22}~(a) and~(b). 
			\begin{figure}[htbp]
				\begin{center}
					\begin{subfigure}{0.34\textwidth}
						\centering
						\tikzset{every picture/.style={line width=0.75pt}} 
						\begin{tikzpicture}[x=0.75pt,y=0.75pt,yscale=-1,xscale=1]
							\draw [black, very thick]    (120.95,72.98) .. controls (129.4,79.71) and (132.64,90.46) .. (132.41,98.1) ; 
							\draw [black, very thick]    (101.98,55.73) .. controls (87.28,42.37) and (69.71,56.16) .. (69.72,77.19) .. controls (69.73,98.21) and (83.91,118.13) .. (105.26,128.9) ;
							\draw [black, very thick]    (79.55,87.59) .. controls (118.72,51.48) and (141.69,52.15) .. (155.88,59.88) .. controls (170.07,67.61) and (172,84.19) .. (165.32,104.85) .. controls (158.64,125.51) and (133.7,140.69) .. (122.1,134.75) ;
							\draw [black, very thick]    (64.16,99.44) .. controls (48.37,115.65) and (68.87,159.13) .. (95.38,150.05) .. controls (121.89,140.96) and (109.4,95.74) .. (149.95,116.03) ;
							\draw [black, very thick]    (166.05,123.51) .. controls (196.32,139.29) and (193.2,183.02) .. (153,189.36) ;
							\draw [black, very thick]    (136.76,191.72) .. controls (121.34,193.31) and (102.11,192.52) .. (89.02,189.83) ;
							\draw [black, very thick]    (69.46,185.34) .. controls (56.35,180.52) and (48.78,196.19) .. (49.01,207.41) .. controls (49.23,218.63) and (52.67,237.97) .. (92.45,246.22) .. controls (132.24,254.47) and (156.22,246.75) .. (159.85,229.9) .. controls (163.48,213.06) and (136.52,195.86) .. (145.74,180.58) .. controls (154.96,165.3) and (197.15,182.12) .. (208.86,175) .. controls (220.57,167.88) and (204.81,154.85) .. (194.12,151.19) ;
							\draw [black, very thick]    (176.77,145.32) .. controls (166.48,141.76) and (153.72,135.96) .. (137.17,125.99) .. controls (120.62,116.02) and (101.94,99.82) .. (92.96,110.08) ;
							\draw [black, very thick]    (130.05,200.83) .. controls (132.95,210.97) and (132.51,217.92) .. (127.16,226.64) ;
							\draw [black, very thick]    (124.56,183.82) .. controls (122.38,177.35) and (119.39,166.45) .. (109.37,169.83) .. controls (99.34,173.21) and (112.72,206.51) .. (98.02,209.89) .. controls (83.31,213.28) and (78.63,174.46) .. (66.6,166.27) .. controls (54.56,158.09) and (42.98,166.46) .. (50.78,186.58) .. controls (58.58,206.7) and (89.33,217.55) .. (86.21,235.36) ;
							\draw [black, very thick]    (80.47,120.04) .. controls (67.13,133.04) and (28.48,162.36) .. (28.71,207.23) .. controls (28.94,252.1) and (74.61,276.31) .. (80.85,254.77) ;
							\draw  [fill={rgb, 255:red, 0; green, 0; blue, 0 }  ,fill opacity=1 ][black, very thick]  (127.89,98.1) .. controls (127.89,95.6) and (129.91,93.58) .. (132.41,93.58) .. controls (134.9,93.58) and (136.93,95.6) .. (136.93,98.1) .. controls (136.93,100.59) and (134.9,102.62) .. (132.41,102.62) .. controls (129.91,102.62) and (127.89,100.59) .. (127.89,98.1) -- cycle ;
							\draw  [black, very thick]  (140.11,130.31) .. controls (140.11,127.81) and (142.14,125.79) .. (144.63,125.79) .. controls (147.13,125.79) and (149.15,127.81) .. (149.15,130.31) .. controls (149.15,132.8) and (147.13,134.83) .. (144.63,134.83) .. controls (142.14,134.83) and (140.11,132.8) .. (140.11,130.31) -- cycle ;
							\draw  [fill={rgb, 255:red, 0; green, 0; blue, 0 }  ,fill opacity=1 ][black, very thick]  (122.64,226.64) .. controls (122.64,224.15) and (124.66,222.12) .. (127.16,222.12) .. controls (129.66,222.12) and (131.68,224.15) .. (131.68,226.64) .. controls (131.68,229.14) and (129.66,231.16) .. (127.16,231.16) .. controls (124.66,231.16) and (122.64,229.14) .. (122.64,226.64) -- cycle ;
							\draw  [black, very thick]  (174.68,174.88) .. controls (174.68,172.38) and (176.71,170.36) .. (179.2,170.36) .. controls (181.7,170.36) and (183.72,172.38) .. (183.72,174.88) .. controls (183.72,177.37) and (181.7,179.4) .. (179.2,179.4) .. controls (176.71,179.4) and (174.68,177.37) .. (174.68,174.88) -- cycle ;
							\draw  [black, very thick]  (101.16,191.89) .. controls (101.16,189.4) and (103.18,187.37) .. (105.68,187.37) .. controls (108.18,187.37) and (110.2,189.4) .. (110.2,191.89) .. controls (110.2,194.39) and (108.18,196.41) .. (105.68,196.41) .. controls (103.18,196.41) and (101.16,194.39) .. (101.16,191.89) -- cycle ;
							\draw  [black, very thick]  (48.87,191.89) .. controls (48.87,189.4) and (50.9,187.37) .. (53.39,187.37) .. controls (55.89,187.37) and (57.91,189.4) .. (57.91,191.89) .. controls (57.91,194.39) and (55.89,196.41) .. (53.39,196.41) .. controls (50.9,196.41) and (48.87,194.39) .. (48.87,191.89) -- cycle ;
							\draw  [black, very thick]  (60.08,135.04) .. controls (60.08,132.54) and (62.1,130.52) .. (64.6,130.52) .. controls (67.1,130.52) and (69.12,132.54) .. (69.12,135.04) .. controls (69.12,137.53) and (67.1,139.56) .. (64.6,139.56) .. controls (62.1,139.56) and (60.08,137.53) .. (60.08,135.04) -- cycle ;
							\draw  [black, very thick]  (117.1,115.89) .. controls (117.1,113.4) and (119.12,111.37) .. (121.62,111.37) .. controls (124.11,111.37) and (126.14,113.4) .. (126.14,115.89) .. controls (126.14,118.39) and (124.11,120.41) .. (121.62,120.41) .. controls (119.12,120.41) and (117.1,118.39) .. (117.1,115.89) -- cycle ;
							\draw  [black, very thick]  (117.24,245.92) -- (125.01,249.3) -- (117.76,253.01) ;
							\draw (116,265) node [anchor=north west][inner sep=0.75pt] {$K_5$};
						\end{tikzpicture}						
					\end{subfigure}
					\qquad 
					\begin{subfigure}{0.34\textwidth}
						\centering
						\tikzset{every picture/.style={line width=0.75pt}} 
						\begin{tikzpicture}[x=0.75pt,y=0.75pt,yscale=-1,xscale=1] 
							\draw [color={rgb, 255:red, 0; green, 0; blue, 0 }  ,draw opacity=1 ][black, very thick]    (308.54,99.37) .. controls (346.6,60.76) and (384.06,46.75) .. (401.47,58.29) .. controls (418.89,69.84) and (418.25,83.06) .. (410.89,103.44) .. controls (403.54,123.82) and (382.04,138.9) .. (366.84,133.18) ;
							\draw [color={rgb, 255:red, 0; green, 0; blue, 0 }  ,draw opacity=1 ][black, very thick]    (322.36,100.96) .. controls (326.8,108.81) and (337.93,123.34) .. (369.21,134.01) ;
							\draw [color={rgb, 255:red, 0; green, 0; blue, 0 }  ,draw opacity=1 ][black, very thick]    (366.59,71.44) .. controls (375.03,78.2) and (378.27,88.99) .. (378.04,96.66) ;
							\draw [color={rgb, 255:red, 0; green, 0; blue, 0 }  ,draw opacity=1 ][black, very thick]    (347.66,54.12) .. controls (332.98,40.72) and (311.76,54.32) .. (316.13,84.09) ;
							\draw [color={rgb, 255:red, 0; green, 0; blue, 0 }  ,draw opacity=1 ][black, very thick]    (309.9,98) .. controls (290.55,115.54) and (325.47,177.39) .. (354.83,136.81) ;
							\draw [color={rgb, 255:red, 0; green, 0; blue, 0 }  ,draw opacity=1 ][black, very thick]    (411.63,122.17) .. controls (441.84,138.02) and (438.73,181.92) .. (398.6,188.28) ;
							\draw [color={rgb, 255:red, 0; green, 0; blue, 0 }  ,draw opacity=1 ][black, very thick]    (382.38,190.65) .. controls (366.99,192.25) and (347.79,191.45) .. (334.72,188.75) ;
							\draw [color={rgb, 255:red, 0; green, 0; blue, 0 }  ,draw opacity=1 ][black, very thick]    (315.19,184.25) .. controls (302.1,179.41) and (294.54,195.14) .. (294.77,206.4) .. controls (294.99,217.66) and (298.42,237.08) .. (338.15,245.37) .. controls (377.87,253.65) and (401.81,245.89) .. (405.43,228.98) .. controls (409.06,212.07) and (382.14,194.81) .. (391.34,179.47) .. controls (400.55,164.13) and (442.68,181.02) .. (454.37,173.87) .. controls (466.06,166.71) and (450.32,153.64) .. (439.65,149.96) ;
							\draw [color={rgb, 255:red, 0; green, 0; blue, 0 }  ,draw opacity=1 ][black, very thick]    (422.33,144.07) .. controls (412.05,140.49) and (399.31,134.67) .. (382.79,124.66) .. controls (366.26,114.66) and (347.62,98.39) .. (338.65,108.69) ;
							\draw [color={rgb, 255:red, 0; green, 0; blue, 0 }  ,draw opacity=1 ][black, very thick]    (375.68,199.79) .. controls (378.58,209.98) and (378.13,216.95) .. (372.8,225.71) ;
							\draw [color={rgb, 255:red, 0; green, 0; blue, 0 }  ,draw opacity=1 ][black, very thick]    (370.2,182.72) .. controls (368.02,176.23) and (365.04,165.28) .. (355.03,168.68) .. controls (345.02,172.07) and (358.38,205.5) .. (343.7,208.9) .. controls (329.02,212.3) and (324.34,173.33) .. (312.33,165.11) .. controls (300.32,156.88) and (288.75,165.29) .. (296.54,185.49) .. controls (304.33,205.69) and (335.03,216.59) .. (331.92,234.46) ;
							\draw [color={rgb, 255:red, 0; green, 0; blue, 0 }  ,draw opacity=1 ][black, very thick]    (326.18,118.69) .. controls (312.86,131.74) and (274.27,161.18) .. (274.5,206.23) .. controls (274.73,251.27) and (320.33,275.58) .. (326.56,253.95) ;
							\draw [color={rgb, 255:red, 0; green, 0; blue, 0 }  ,draw opacity=1 ][black, very thick]    (361.72,123.33) .. controls (366.32,113.75) and (372.4,102.67) .. (395.55,114.66) ;
							\draw  [black, very thick]  (365.68,245.45) -- (373.61,248.45) -- (366.55,252.5) ;
							\draw  [fill={rgb, 255:red, 0; green, 0; blue, 0 }  ,fill opacity=1 ][black, very thick]  (373.52,96.66) .. controls (373.52,94.16) and (375.54,92.14) .. (378.04,92.14) .. controls (380.53,92.14) and (382.56,94.16) .. (382.56,96.66) .. controls (382.56,99.16) and (380.53,101.18) .. (378.04,101.18) .. controls (375.54,101.18) and (373.52,99.16) .. (373.52,96.66) -- cycle ;
							\draw  [fill={rgb, 255:red, 0; green, 0; blue, 0 }  ,fill opacity=1 ][black, very thick]  (368.28,225.71) .. controls (368.28,223.22) and (370.3,221.19) .. (372.8,221.19) .. controls (375.29,221.19) and (377.32,223.22) .. (377.32,225.71) .. controls (377.32,228.21) and (375.29,230.23) .. (372.8,230.23) .. controls (370.3,230.23) and (368.28,228.21) .. (368.28,225.71) -- cycle ;
							\draw  [black, very thick]  (305.91,133.51) .. controls (305.91,131.01) and (307.94,128.99) .. (310.43,128.99) .. controls (312.93,128.99) and (314.95,131.01) .. (314.95,133.51) .. controls (314.95,136) and (312.93,138.03) .. (310.43,138.03) .. controls (307.94,138.03) and (305.91,136) .. (305.91,133.51) -- cycle ;
							\draw  [black, very thick]  (362.71,114.51) .. controls (362.71,112.01) and (364.74,109.99) .. (367.23,109.99) .. controls (369.73,109.99) and (371.75,112.01) .. (371.75,114.51) .. controls (371.75,117) and (369.73,119.03) .. (367.23,119.03) .. controls (364.74,119.03) and (362.71,117) .. (362.71,114.51) -- cycle ;
							\draw  [black, very thick]  (385.71,128.91) .. controls (385.71,126.41) and (387.74,124.39) .. (390.23,124.39) .. controls (392.73,124.39) and (394.75,126.41) .. (394.75,128.91) .. controls (394.75,131.4) and (392.73,133.43) .. (390.23,133.43) .. controls (387.74,133.43) and (385.71,131.4) .. (385.71,128.91) -- cycle ;
							\draw  [black, very thick]  (294.91,190.51) .. controls (294.91,188.01) and (296.94,185.99) .. (299.43,185.99) .. controls (301.93,185.99) and (303.95,188.01) .. (303.95,190.51) .. controls (303.95,193) and (301.93,195.03) .. (299.43,195.03) .. controls (296.94,195.03) and (294.91,193) .. (294.91,190.51) -- cycle ;
							\draw  [black, very thick]  (346.71,190.71) .. controls (346.71,188.21) and (348.74,186.19) .. (351.23,186.19) .. controls (353.73,186.19) and (355.75,188.21) .. (355.75,190.71) .. controls (355.75,193.2) and (353.73,195.23) .. (351.23,195.23) .. controls (348.74,195.23) and (346.71,193.2) .. (346.71,190.71) -- cycle ;
							\draw  [black, very thick]  (420.31,173.71) .. controls (420.31,171.21) and (422.34,169.19) .. (424.83,169.19) .. controls (427.33,169.19) and (429.35,171.21) .. (429.35,173.71) .. controls (429.35,176.2) and (427.33,178.23) .. (424.83,178.23) .. controls (422.34,178.23) and (420.31,176.2) .. (420.31,173.71) -- cycle ;
							\draw (366,265) node [anchor=north west][inner sep=0.75pt] {$K_{6}$};
						\end{tikzpicture}						
					\end{subfigure}
					\begin{subfigure}{0.34\textwidth}
						\centering
						\tikzset{every picture/.style={line width=0.75pt}} 
						\begin{tikzpicture}[x=0.75pt,y=0.75pt,yscale=-1,xscale=1]
							\draw  [draw opacity=0][black, very thick]  (237.94,444.48) .. controls (230.54,486.74) and (193.67,518.85) .. (149.3,518.85) .. controls (99.59,518.85) and (59.3,478.56) .. (59.3,428.85) .. controls (59.3,379.15) and (99.59,338.85) .. (149.3,338.85) .. controls (193.67,338.85) and (230.54,370.97) .. (237.94,413.22) -- (149.3,428.85) -- cycle ; \draw  [line width=1.5]  (237.94,444.48) .. controls (230.54,486.74) and (193.67,518.85) .. (149.3,518.85) .. controls (99.59,518.85) and (59.3,478.56) .. (59.3,428.85) .. controls (59.3,379.15) and (99.59,338.85) .. (149.3,338.85) .. controls (193.67,338.85) and (230.54,370.97) .. (237.94,413.22) ;  
							\draw  [fill={rgb, 255:red, 0; green, 0; blue, 0 }  ,fill opacity=1 ][black, very thick]  (233.42,444.48) .. controls (233.42,441.99) and (235.45,439.96) .. (237.94,439.96) .. controls (240.44,439.96) and (242.46,441.99) .. (242.46,444.48) .. controls (242.46,446.98) and (240.44,449) .. (237.94,449) .. controls (235.45,449) and (233.42,446.98) .. (233.42,444.48) -- cycle ;
							\draw  [fill={rgb, 255:red, 0; green, 0; blue, 0 }  ,fill opacity=1 ][black, very thick]  (233.42,413.22) .. controls (233.42,410.72) and (235.45,408.7) .. (237.94,408.7) .. controls (240.44,408.7) and (242.46,410.72) .. (242.46,413.22) .. controls (242.46,415.72) and (240.44,417.74) .. (237.94,417.74) .. controls (235.45,417.74) and (233.42,415.72) .. (233.42,413.22) -- cycle ;
							\draw [black, very thick, ->]    (93.25,358.9) -- (230.04,389.83) ;
							\draw [red, very thick, ->]    (207.67,360.48) -- (77.25,374.9) ;
							\draw [black, very thick, ->]    (72.37,473.91) -- (231.17,466.6) ;
							\draw [black, very thick, ->]    (218.25,486.71) -- (85.04,491.57) ;
							\draw [black, very thick, ->]    (102.37,505.91) -- (198.7,503.57) ;
							\draw [black, very thick, ->]    (136.5,517.71) -- (64.25,458.21) ;
							\draw [black, very thick, ->]    (60.14,439.94) -- (154.75,518.51) ;
							\draw [black, very thick, ->]    (177.5,343.15) -- (174.25,515.21) ;
							\draw [red, very thick, ->]    (65.25,396.65) -- (154.75,338.9) ;
							\draw [black, very thick, ->]    (124.39,342.63) -- (60,418.9) ;
							\draw (53.17,365) node [anchor=north west][inner sep=0.75pt]  [font=\large]  {$c_{2}$};
							\draw (165.17,522.02) node [anchor=north west][inner sep=0.75pt]  [font=\large]  {$c_{3}$};
							\draw (152.58,322.6) node [anchor=north west][inner sep=0.75pt]  [font=\large]  {$c_{4}$};
							\draw (214.56,351.91) node [anchor=north west][inner sep=0.75pt]  [font=\scriptsize]  {$-$};
							\draw (116.36,328.59) node [anchor=north west][inner sep=0.75pt]  [font=\scriptsize]  {$-$};
							\draw (172.92,329.18) node [anchor=north west][inner sep=0.75pt]  [font=\scriptsize]  {$-$};
							\draw (39.1,411.55) node [anchor=north west][inner sep=0.75pt]  [font=\large]  {$c_{5}$};
							\draw (236.92,385) node [anchor=north west][inner sep=0.75pt]  [font=\large]  {$c_{1}$};
							\draw (41.29,449.56) node [anchor=north west][inner sep=0.75pt]  [font=\large]  {$c_{7}$};
							\draw (238.35,461.67) node [anchor=north west][inner sep=0.75pt]  [font=\large]  {$c_{8}$};
							\draw (144.1,525.36) node [anchor=north west][inner sep=0.75pt]  [font=\large]  {$c_{6}$};
							\draw (59.64,485.61) node [anchor=north west][inner sep=0.75pt]  [font=\large]  {$c_{9}$};
							\draw (208.5,499.51) node [anchor=north west][inner sep=0.75pt]  [font=\large]  {$c_{10}$};
							\draw (81.89,502.23) node [anchor=north west][inner sep=0.75pt]  [font=\scriptsize]  {$+$};
							\draw (77.76,348.29) node [anchor=north west][inner sep=0.75pt]  [font=\scriptsize]  {$-$};
							\draw (225.21,482.19) node [anchor=north west][inner sep=0.75pt]  [font=\scriptsize]  {$-$};
							\draw (48.16,390.54) node [anchor=north west][inner sep=0.75pt]  [font=\scriptsize]  {$-$};
							\draw (42.16,434.76) node [anchor=north west][inner sep=0.75pt]  [font=\scriptsize]  {$-$};
							\draw (128.46,524.81) node [anchor=north west][inner sep=0.75pt]  [font=\scriptsize]  {$+$};
							\draw (52.44,469.03) node [anchor=north west][inner sep=0.75pt]  [font=\scriptsize]  {$+$};
							\draw (133,540) node [anchor=north west][inner sep=0.75pt]    {(a)};
						\end{tikzpicture}						
					\end{subfigure}
					\qquad 
					\begin{subfigure}{0.34\textwidth}
						\centering
						\tikzset{every picture/.style={line width=0.75pt}} 
						\begin{tikzpicture}[x=0.75pt,y=0.75pt,yscale=-1,xscale=1]
							\draw  [draw opacity=0][black, very thick]  (237.94,444.48) .. controls (230.54,486.74) and (193.67,518.85) .. (149.3,518.85) .. controls (99.59,518.85) and (59.3,478.56) .. (59.3,428.85) .. controls (59.3,379.15) and (99.59,338.85) .. (149.3,338.85) .. controls (193.67,338.85) and (230.54,370.97) .. (237.94,413.22) -- (149.3,428.85) -- cycle ; \draw  [black, very thick]  (237.94,444.48) .. controls (230.54,486.74) and (193.67,518.85) .. (149.3,518.85) .. controls (99.59,518.85) and (59.3,478.56) .. (59.3,428.85) .. controls (59.3,379.15) and (99.59,338.85) .. (149.3,338.85) .. controls (193.67,338.85) and (230.54,370.97) .. (237.94,413.22) ;  
							\draw  [fill={rgb, 255:red, 0; green, 0; blue, 0 }  ,fill opacity=1 ][black, very thick]  (233.42,444.48) .. controls (233.42,441.99) and (235.45,439.96) .. (237.94,439.96) .. controls (240.44,439.96) and (242.46,441.99) .. (242.46,444.48) .. controls (242.46,446.98) and (240.44,449) .. (237.94,449) .. controls (235.45,449) and (233.42,446.98) .. (233.42,444.48) -- cycle ;
							\draw  [fill={rgb, 255:red, 0; green, 0; blue, 0 }  ,fill opacity=1 ][black, very thick]  (233.42,413.22) .. controls (233.42,410.72) and (235.45,408.7) .. (237.94,408.7) .. controls (240.44,408.7) and (242.46,410.72) .. (242.46,413.22) .. controls (242.46,415.72) and (240.44,417.74) .. (237.94,417.74) .. controls (235.45,417.74) and (233.42,415.72) .. (233.42,413.22) -- cycle ;
							\draw [black, very thick, ->]    (93.25,358.9) -- (230.04,389.83) ;
							\draw [red, very thick, <-]    (207.67,360.48) -- (77.25,374.9) ;
							\draw [black, very thick, ->]    (72.37,473.91) -- (231.17,466.6) ;
							\draw [black, very thick, ->]    (218.25,486.71) -- (85.04,491.57) ;
							\draw [black, very thick, ->]    (102.37,505.91) -- (198.7,503.57) ;
							\draw [black, very thick, ->]    (136.5,517.71) -- (64.25,458.21) ;
							\draw [black, very thick, ->]    (60.14,439.94) -- (154.75,518.51) ;
							\draw [black, very thick, ->]    (177.5,343.15) -- (174.25,515.21) ;
							\draw [red, very thick, <-]    (65.25,396.65) -- (154.75,338.9) ;
							\draw [black, very thick, ->]    (124.39,342.63) -- (60,418.9) ;
							\draw (62,368) node [anchor=north west][inner sep=0.75pt]  [font=\scriptsize]  {$+$};
							\draw (166,518) node [anchor=north west][inner sep=0.75pt]  [font=\large]  {$c_{3}'$};
							\draw (150,326) node [anchor=north west][inner sep=0.75pt]  [font=\scriptsize]  {$+$};
							\draw (214,348) node [anchor=north west][inner sep=0.75pt]  [font=\large]  {$c_{2}'$};
							\draw (116.36,328.59) node [anchor=north west][inner sep=0.75pt]  [font=\scriptsize]  {$-$};
							\draw (172.92,329.18) node [anchor=north west][inner sep=0.75pt]  [font=\scriptsize]  {$-$};
							\draw (39.1,411.55) node [anchor=north west][inner sep=0.75pt]  [font=\large]  {$c_{5}'$};
							\draw (234,380.59) node [anchor=north west][inner sep=0.75pt]  [font=\large]  {$c_{1}'$};
							\draw (42,446) node [anchor=north west][inner sep=0.75pt]  [font=\large]  {$c_{7}'$};
							\draw (234,456) node [anchor=north west][inner sep=0.75pt]  [font=\large]  {$c_{8}'$};
							\draw (148,522) node [anchor=north west][inner sep=0.75pt]  [font=\large]  {$c_{6}'$};
							\draw (61,484) node [anchor=north west][inner sep=0.75pt]  [font=\large]  {$c_{9}'$};
							\draw (206,493) node [anchor=north west][inner sep=0.75pt]  [font=\large]  {$c_{10}'$};
							\draw (81.89,502.23) node [anchor=north west][inner sep=0.75pt]  [font=\scriptsize]  {$+$};
							\draw (77.76,348.29) node [anchor=north west][inner sep=0.75pt]  [font=\scriptsize]  {$-$};
							\draw (225.21,482.19) node [anchor=north west][inner sep=0.75pt]  [font=\scriptsize]  {$-$};
							\draw (42,388) node [anchor=north west][inner sep=0.75pt]  [font=\large]  {$c_{4}'$};
							\draw (42.16,434.76) node [anchor=north west][inner sep=0.75pt]  [font=\scriptsize]  {$-$};
							\draw (128.46,524.81) node [anchor=north west][inner sep=0.75pt]  [font=\scriptsize]  {$+$};
							\draw (52.44,469.03) node [anchor=north west][inner sep=0.75pt]  [font=\scriptsize]  {$+$};
							\draw (138,545) node [anchor=north west][inner sep=0.75pt]    {(b)};
						\end{tikzpicture}						
					\end{subfigure}		
					\caption{Diagrams and Gauss diagrams of planar virtual knotoids $K_5$ and $K_{6}$.}
					\label{fig22}
				\end{center}
			\end{figure}
			
			Through the Gauss diagrams of $K_5$ and $K_{6}$, we can obtain the indices of all chords in the Gauss diagram of $K_5$,
			$$
			\Ind(c_1)=-2,\ \Ind(c_2)=2,\ \Ind(c_3)=-3,\ \Ind(c_4)=-1,\ \Ind(c_5)=1, 
			$$
			$$
			\Ind(c_6)=-3,\ \Ind(c_7)=3,\ \Ind(c_8)=-3,\ \Ind(c_9)=3,\ \Ind(c_{10})=-3;
			$$
			and the index of each chord of the Gauss diagram of $K_{6}$,
			$$
			\Ind(c_1')=-2,\ \Ind(c_2')=-2,\ \Ind(c_3')=-3,\ \Ind(c_4')=1,\ \Ind(c_5')=1, 
			$$
			$$
			\Ind(c_6')=-3,\ \Ind(c_7')=3,\ \Ind(c_8')=-3,\ \Ind(c_9')=3,\ \Ind(c_{10}')=-3.
			$$
			
			It follows that the maximum absolute chord value of the two knotoids equals $4$. Consequently, every crossing in the diagrams of $K_5$ and $K_{6}$ becomes virtual under the $r$-covering whenever $r>4$, making both $K_5^{(r)}$ and $K_{6}^{(r)}$ trivial knotoids and hence $d_G(K_5^{(r)}, K_{6}^{(r)}) = 0$. We then examine the cases $r=1,2,3,4$. 
			
			When $r = 1$, no crossings of the diagrams of $K_5$ and $K_{6}$ are virtualized by the $1$-covering, and thus $d_G(K_5^{(1)}, K_{6}^{(1)}) = d_G(K_5, K_{6})$.
			
			When $r = 2$, we consider the $2$-covering, and find that the diagram of $K_5^{(2)}$ is illustrated in Figure~\ref{fig20}, and $K_{6}^{(2)}$ is the trivial knotoid. So $d_G(K_5^{(2)}, K_{6}^{(2)}) = 1$.
			
			When $r=3$, the $3$-covering virtualizes all the distinct crossings between the diagrams of $K_5$ and $K_{6}$, so $K_5^{(3)}=K_{6}^{(3)}$, that is, $d_G(K_5^{(3)}, K_{6}^{(3)}) = 0$.
			
			In summary, the $r$-coverings yield $d_G(K_5, K_6) \geq 1$, and we can find that the diagrams of $K_5$ and $K_{6}$ differ by two crossing changes, so $d_G(K_5, K_{6}) \le 2$. Therefore, $1\le d_G(K_5, K_{6}) \le 2$.
		}
	\end{example}
	
	It thus follows that for any pair of homotopic planar virtual knotoids, evaluating their exact Gordian distance may require auxiliary methods in addition to the $r$-covering.
	
	\section{$\Delta$-Gordian distance: proof of Theorem~\ref{thm-decov}.} \label{sec6}
	
	This section opens with the proof of Theorem~\ref{thm-decov}. We also supply an explicit example for computing the $\Delta$-Gordian distance between $\Delta$-homotopic planar virtual knotoids. 
	
	\begin{proof}[Proof of Theorem~\ref{thm-decov}.]
		Since $K$ and $K'$ are $\Delta$-homotopic, set $d_\Delta(K,K')=n$. There exists a sequence of planar virtual knotoids $K_0, \dots, K_n$ such that $K_0=K$, $K_n=K'$, where for each $i=1,\dots,n$, one diagram of $K_{i-1}$ can be converted into one diagram of $K_i$ via exactly one $\Delta$-move. Let $D_{i-1}$ and $D_{i}$ be such diagrams for $K_{i-1}$ and $K_i$, and let $G_{i-1}$, $G_{i}$ denote their respective Gauss diagrams, such that the $\Delta$-move for $D_{i-1}$ converts it into $D_{i}$. Let $c_1$, $c_2$, and $c_3$ be the crossings of  $D_{i-1}$ involved in the $\Delta$-move, and we denote the crossings of $D_{i}$ corresponding to $c_1$, $c_2$, $c_3$ by $c_1'$, $c_2'$, $c_3'$. We see from the $\Delta$-moves on a Gauss diagram depicted in Figure~\ref{fig10} that these $\Delta$-moves leave the index of every chord unchanged. Combining this with the $\Delta$-moves on diagrams shown in Figure~\ref {fig7}, we find that when we proceed to apply the $\Delta$-move, all crossings of $D_{i-1}$ remain unchanged with the exception of $c_1$, $c_2$, $c_3$, and $\Ind(c_j) = \Ind(c_j')$ ($j = 1, 2, 3$). We may verify that $\Ind(c_1) + \Ind(c_2) + \Ind(c_3) = 0$. Four cases are now discussed separately.
		
		\noindent\textbf{Case (a).}
		The numbers $\Ind(c_1)$, $\Ind(c_2)$ and $\Ind(c_3)$ are all divisible by $r$. The $r$-covering preserves the crossings $c_1$, $c_2$, $c_3$ in $D_{i-1}$ and the crossings $c_1'$, $c_2'$, $c_3'$ in  $D_{i}$. Then, a $\Delta$-move from $D_{i-1}$ to $D_{i}$ corresponds to a $\Delta$-move from $D_{i-1}^{(r)}$ to $D_{i}^{(r)}$. Thus, $D_{i-1}^{(r)}$ and $D_i^{(r)}$ differ by a $\Delta$-move.
		
		\noindent\textbf{Case (b).}
		One of the numbers is divisible by $r$ and the other two are not. In this case, assume $\Ind(c_1)$ is divisible by $r$. The $r$-covering virtualizes crossings $c_2, c_3$ among $c_1, c_2, c_3$ in $D_{i-1}$ and  crossings $c_2', c_3'$ among $c_1', c_2', c_3'$ in $D_{i}$. Then, a $\Delta$-move from $D_{i-1}$ to $D_{i}$  corresponds to an $\Omega_3^m$-move from $D_{i-1}^{(r)}$ to $D_{i}^{(r)}$. Hence, $D_{i-1}^{(r)}$ and $D_i^{(r)}$ are equivalent.
		
		\noindent\textbf{Case (c).}
		Two of the numbers are divisible by $r$ and the other is not. However, since $\Ind(c_1) + \Ind(c_2) + \Ind(c_3) = 0$, this case does not occur.
		
		\noindent\textbf{Case (d).}
		None of these numbers is divisible by $r$. Hence, the $r$-covering virtualizes the crossings $c_1$, $c_2$, $c_3$ in $D_{i-1}$ and the crossings $c_1'$, $c_2'$, $c_3'$ in $D_{i}$. Then, a $\Delta$-move from $D_{i-1}$ to $D_{i}$ corresponds to an $\Omega_3^v$-move from $D_{i-1}^{(r)}$ to $D_{i}^{(r)}$. Therefore,  $D_{i-1}^{(r)}$ and $D_i^{(r)}$ are equivalent.
		
		Thus, we obtain a sequence of planar virtual knotoid diagrams $D_0^{(r)}, \dots, D_n^{(r)}$ satisfying $D_0^{(r)}=D^{(r)}$, $D_n^{(r)}=D'^{(r)}$, where $D_{i-1}^{(r)}$ and $D_i^{(r)}$ differ by a $\Delta$-move or are identical ($i = 1 , \dots , n$). Therefore, $K^{(r)}$ is $\Delta$-homotopic to $K'^{(r)}$. Moreover, $d_\Delta(K, K') \geq d_\Delta(K^{(r)}, K'^{(r)})$. 
	\end{proof}
	
	One readily verifies that a $\Delta$-move can be realized by two crossing changes and an $\Omega_3$-move. Thus, we obtain the following.
	
	\begin{proposition}\label{pro 5.1}
		If $K$ and $K'$ are $\Delta$-homotopic planar virtual knotoids, then $K$ and $K'$ are homotopic.
	\end{proposition}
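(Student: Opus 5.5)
The statement is Proposition~\ref{pro 5.1}: $\Delta$-homotopy implies homotopy. The plan is to show that a single $\Delta$-move on a planar virtual knotoid diagram can be realized by crossing changes together with generalized Reidemeister moves, all supported in the disk where the $\Delta$-move takes place. Once this is done, we take a sequence of $\Delta$-moves, generalized Reidemeister moves and planar isotopies from a diagram of $K$ to a diagram of $K'$. Replacing each $\Delta$-move by its local realization gives a sequence of crossing changes, generalized Reidemeister moves and planar isotopies, which is exactly homotopy in the sense of Definition~\ref{def2.6}.

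For the local realization, look at the two tangles in Figure~\ref{fig7}. Each consists of three strands pairwise crossing once, forming a small triangular region. The two tangles differ in how the strands pass at the three crossings.

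1. Choose two of the three crossings on the left-hand tangle and apply crossing changes there. The two crossings are chosen so that the strand that becomes the top strand passes over both others, and the strand that becomes the bottom strand passes under both others.
2. The resulting tangle is then a standard configuration in which one strand lies entirely over the triangle, so an $\Omega_3$-move applies. It slides that strand across the crossing of the other two.
3. After the $\Omega_3$-move, check that the picture coincides, up to planar isotopy inside the disk, with the right-hand tangle of Figure~\ref{fig7}.

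Since all oriented variants of the $\Delta$-move are allowed, we must make sure the same recipe works for every orientation. The crossing changes and the $\Omega_3$-move are insensitive to orientation: $\Omega_3$ is permitted with all orientations, and crossing changes ignore orientation. So it suffices to verify the unoriented picture once.

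All of this happens in a disk disjoint from the endpoints and from the other crossings, so none of the forbidden moves $\Phi_\pm$ are used, and virtual crossings elsewhere are untouched. The main obstacle is step 3: after the two crossing changes, the over/under data must really be the "stacked" pattern allowed by $\Omega_3$, and the image after the move must have exactly the crossing data of the other side of Figure~\ref{fig7}.

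If the naive choice in step 1 fails, there is a direct fallback. Compare the crossing data of the two tangles after the triangle is flipped by $\Omega_3$: the strands keep their relative over/under relation when the triangle is flipped. Then perform crossing changes at whichever crossings disagree. This uses at most three crossing changes, which is still enough for homotopy. Either way, each $\Delta$-move is a finite composition of crossing changes and $\Omega_3$, and the proposition follows.
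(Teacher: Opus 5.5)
Your approach is the paper's. The paper's entire proof is the remark that a $\Delta$-move can be realized by two crossing changes and one $\Omega_3$-move, so any $\Delta$-homotopy is a homotopy. Your fallback (make the triangle stacked, apply $\Omega_3$, then change whichever crossings disagree) makes your argument complete. For homotopy, the exact number of crossing changes is irrelevant.

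Your primary recipe, however, cannot work as stated. In Figure~\ref{fig7} both tangles carry the same \emph{cyclic} over/under pattern: each strand is over one of the other two and under the other. As you observe yourself, $\Omega_3$ preserves which strand is over for each of the three pairs.

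\begin{itemize}
\item If you change \emph{two} crossings before $\Omega_3$, the result after $\Omega_3$ differs from the right-hand tangle at those same two crossings. Step~3 then fails, and your total is four crossing changes, not ``at most three.''
\item For a cyclic triangle a single crossing change already produces a stacked configuration.
\end{itemize}

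The clean decomposition is therefore: change one crossing, apply $\Omega_3$, then change back the crossing between the same pair of strands. This gives exactly the paper's count of two crossing changes and one $\Omega_3$-move.
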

	
	Theorem~\ref{thm-UVW} implies all planar virtual knotoids in $\mathcal{U}$ are non-homotopic to those in $\mathcal{W}$. By Proposition~\ref{pro 5.1}, no planar virtual knotoid in $\mathcal{U}$ is $\Delta$-homotopic to any planar virtual knotoid in $\mathcal{W}$.
	
	Theorem~\ref{thm-decov} provides a method for calculating the $\Delta$-Gordian distances of $\Delta$-homotopic planar virtual knotoids. We supply an explicit example to illustrate this construction below.
	
	\begin{example} \label{ex:10} {\rm
			The diagrams and Gauss diagrams of $K_7$ and $K_8$ are shown in Figure~\ref{fig32}~(a) and ~(b).
			\begin{figure}[htbp]
				\begin{center}
					\begin{subfigure}{0.34\textwidth}
						\centering
						\tikzset{every picture/.style={line width=0.75pt}}
						\begin{tikzpicture}[x=0.75pt,y=0.75pt,yscale=-0.8,xscale=0.8]
							\draw [color={rgb, 255:red, 208; green, 2; blue, 27 }  ,draw opacity=1 ][red, very thick]    (54.46,123.15) .. controls (56.62,106.38) and (78.33,95.67) .. (93.13,106.93) ;
							\draw [color={rgb, 255:red, 208; green, 2; blue, 27 }  ,draw opacity=1 ][red, very thick]    (108.53,117.53) .. controls (124.2,129.6) and (157.07,159.8) .. (156.6,180) ; 
							\draw [black, very thick]    (156.2,199.2) .. controls (155.8,214.8) and (133.6,222.86) .. (118,220.87) .. controls (102.4,218.87) and (80.2,204.75) .. (78.35,182.6) ;
							\draw [black, very thick]    (77.38,161.92) .. controls (79,140) and (87.23,127.62) .. (100.61,112.52) .. controls (113.99,97.42) and (131.08,94.85) .. (136.62,99.15) .. controls (142.15,103.46) and (141.38,116.08) .. (134.77,126.54) ; 
							\draw [black, very thick]    (98.3,222) .. controls (94.55,237.25) and (115.49,242.42) .. (138.35,242.25) .. controls (161.2,242.09) and (186.4,238.2) .. (198.4,214.6) .. controls (210.4,191) and (208.6,151.8) .. (196.05,129) .. controls (183.5,106.2) and (168.22,89.92) .. (147.6,81.6) .. controls (126.98,73.28) and (101.2,71) .. (80,77) .. controls (58.8,83) and (37.4,100.2) .. (31.6,122.2) .. controls (25.8,144.2) and (35.47,168.79) .. (52.55,174.12) .. controls (69.63,179.45) and (99.88,161.54) .. (121.33,145.46) ;
							\draw [black, very thick]    (102.35,205.92) .. controls (104.75,195.25) and (114.08,187.65) .. (121.42,191.65) .. controls (128.75,195.65) and (129.02,202.18) .. (141.02,212.58) .. controls (153.02,222.98) and (173.42,227.92) .. (177.35,217.65) .. controls (181.28,207.38) and (169.55,196.58) .. (155.28,190.45) .. controls (141.02,184.32) and (131.3,184.73) .. (128.15,177.05) .. controls (125,169.38) and (130.63,162.63) .. (142.38,157.38) .. controls (154.13,152.13) and (167.21,147.99) .. (184.86,135.29) ;
							\draw [black, very thick]    (204.47,121.42) .. controls (210.06,117.59) and (213.94,113.82) .. (217.47,109) ;
							\draw  [black, very thick]  (155.06,80.97) -- (159.98,87.88) -- (151.86,87.31) ;
							\draw  [fill={rgb, 255:red, 0; green, 0; blue, 0 }  ,fill opacity=1 ][black, very thick]  (212.95,109) .. controls (212.95,106.5) and (214.97,104.48) .. (217.47,104.48) .. controls (219.97,104.48) and (221.99,106.5) .. (221.99,109) .. controls (221.99,111.5) and (219.97,113.52) .. (217.47,113.52) .. controls (214.97,113.52) and (212.95,111.5) .. (212.95,109) -- cycle ;
							\draw  [fill={rgb, 255:red, 0; green, 0; blue, 0 }  ,fill opacity=1 ][black, very thick]  (49.94,123.15) .. controls (49.94,120.66) and (51.97,118.63) .. (54.46,118.63) .. controls (56.96,118.63) and (58.98,120.66) .. (58.98,123.15) .. controls (58.98,125.65) and (56.96,127.67) .. (54.46,127.67) .. controls (51.97,127.67) and (49.94,125.65) .. (49.94,123.15) -- cycle ;
							\draw  [black, very thick]  (140.24,215.48) .. controls (140.24,212.98) and (142.26,210.96) .. (144.76,210.96) .. controls (147.25,210.96) and (149.28,212.98) .. (149.28,215.48) .. controls (149.28,217.98) and (147.25,220) .. (144.76,220) .. controls (142.26,220) and (140.24,217.98) .. (140.24,215.48) -- cycle ;
							\draw  [black, very thick]  (141.83,155.73) .. controls (141.83,153.24) and (143.85,151.21) .. (146.35,151.21) .. controls (148.84,151.21) and (150.87,153.24) .. (150.87,155.73) .. controls (150.87,158.23) and (148.84,160.25) .. (146.35,160.25) .. controls (143.85,160.25) and (141.83,158.23) .. (141.83,155.73) -- cycle ;
							\draw (125,260) node [anchor=north west][inner sep=0.75pt]  {$K_7$};
						\end{tikzpicture}
					\end{subfigure}
					\qquad 
					\begin{subfigure}{0.34\textwidth}
						\centering
						\tikzset{every picture/.style={line width=0.75pt}}
						\begin{tikzpicture}[x=0.75pt,y=0.75pt,yscale=-0.8,xscale=0.8]
							\draw [color={rgb, 255:red, 208; green, 2; blue, 27 }  ,draw opacity=1 ][red, very thick]    (375.76,191.21) .. controls (387.35,180.87) and (402.71,172.15) .. (415.76,170.04) .. controls (428.82,167.92) and (443.55,174.75) .. (443.79,183.08) ;
							\draw [black, very thick]    (443.2,199.18) .. controls (442.8,214.78) and (420.6,222.84) .. (405,220.84) .. controls (389.4,218.85) and (367.2,204.73) .. (365.35,182.58) ;
							\draw [black, very thick]    (364.38,161.9) .. controls (366,139.98) and (374.23,127.59) .. (387.61,112.5) .. controls (400.99,97.4) and (416.6,95.38) .. (421.8,99.58) .. controls (427,103.78) and (424.92,121.48) .. (408,141.18) .. controls (391.08,160.88) and (370.2,173.98) .. (353.2,175.18) ;
							\draw [black, very thick]    (385.3,221.98) .. controls (381.55,237.23) and (402.49,242.39) .. (425.35,242.23) .. controls (448.2,242.07) and (473.4,238.18) .. (485.4,214.58) .. controls (497.4,190.98) and (495.6,151.78) .. (483.05,128.98) .. controls (470.5,106.18) and (455.22,89.9) .. (434.6,81.58) .. controls (413.98,73.25) and (388.2,70.98) .. (367,76.98) .. controls (345.8,82.98) and (324.4,100.18) .. (318.6,122.18) .. controls (312.8,144.18) and (322.47,168.77) .. (339.55,174.1) ;
							\draw [black, very thick]    (389.35,205.89) .. controls (391.75,195.23) and (401.08,187.63) .. (408.42,191.63) .. controls (415.75,195.63) and (416.02,202.16) .. (428.02,212.56) .. controls (440.02,222.96) and (460.42,227.89) .. (464.35,217.63) .. controls (468.28,207.36) and (456.55,196.56) .. (442.28,190.43) .. controls (428.02,184.29) and (418.3,184.7) .. (415.15,177.03) .. controls (412,169.35) and (417.63,162.6) .. (429.38,157.35) .. controls (441.13,152.1) and (454.21,147.97) .. (471.86,135.26) ; 
							\draw [color={rgb, 255:red, 208; green, 2; blue, 27 }  ,draw opacity=1 ][red, very thick]    (349.6,139.78) .. controls (347,153.26) and (345.4,169.58) .. (346.57,182.12) .. controls (347.74,194.66) and (353.14,214.83) .. (364.2,203.38) ;
							\draw  [fill={rgb, 255:red, 0; green, 0; blue, 0 }  ,fill opacity=1 ][black, very thick]  (345.08,139.78) .. controls (345.08,137.28) and (347.1,135.26) .. (349.6,135.26) .. controls (352.1,135.26) and (354.12,137.28) .. (354.12,139.78) .. controls (354.12,142.27) and (352.1,144.3) .. (349.6,144.3) .. controls (347.1,144.3) and (345.08,142.27) .. (345.08,139.78) -- cycle ;
							\draw  [black, very thick]  (427.43,215.52) .. controls (427.43,213.02) and (429.46,211) .. (431.95,211) .. controls (434.45,211) and (436.47,213.02) .. (436.47,215.52) .. controls (436.47,218.02) and (434.45,220.04) .. (431.95,220.04) .. controls (429.46,220.04) and (427.43,218.02) .. (427.43,215.52) -- cycle ;
							\draw [black, very thick]    (491.4,121.4) .. controls (496.99,117.57) and (500.87,113.8) .. (504.4,108.98) ; 
							\draw  [fill={rgb, 255:red, 0; green, 0; blue, 0 }  ,fill opacity=1 ][black, very thick]  (499.88,108.98) .. controls (499.88,106.48) and (501.9,104.46) .. (504.4,104.46) .. controls (506.9,104.46) and (508.92,106.48) .. (508.92,108.98) .. controls (508.92,111.48) and (506.9,113.5) .. (504.4,113.5) .. controls (501.9,113.5) and (499.88,111.48) .. (499.88,108.98) -- cycle ;
							\draw  [black, very thick]  (410.39,170.04) .. controls (410.39,167.54) and (412.41,165.52) .. (414.91,165.52) .. controls (417.4,165.52) and (419.43,167.54) .. (419.43,170.04) .. controls (419.43,172.53) and (417.4,174.56) .. (414.91,174.56) .. controls (412.41,174.56) and (410.39,172.53) .. (410.39,170.04) -- cycle ;
							\draw  [black, very thick]  (441.5,80.52) -- (446.03,87.69) -- (437.95,86.68) ;
							\draw (410,260) node [anchor=north west][inner sep=0.75pt]  {$K_8$};
						\end{tikzpicture}
					\end{subfigure}
					\begin{subfigure}{0.34\textwidth}
						\centering
						\tikzset{every picture/.style={line width=0.75pt}}
						\begin{tikzpicture}[x=0.75pt,y=0.75pt,yscale=-1,xscale=1]
							\draw  [draw opacity=0][black, very thick]  (211.44,604.35) .. controls (201.68,628.35) and (178.45,645.24) .. (151.34,645.24) .. controls (115.44,645.24) and (86.34,615.62) .. (86.34,579.09) .. controls (86.34,542.55) and (115.44,512.94) .. (151.34,512.94) .. controls (178.63,512.94) and (201.98,530.04) .. (211.62,554.29) -- (151.34,579.09) -- cycle ; \draw  [black, very thick]  (211.44,604.35) .. controls (201.68,628.35) and (178.45,645.24) .. (151.34,645.24) .. controls (115.44,645.24) and (86.34,615.62) .. (86.34,579.09) .. controls (86.34,542.55) and (115.44,512.94) .. (151.34,512.94) .. controls (178.63,512.94) and (201.98,530.04) .. (211.62,554.29) ;
							\draw  [fill={rgb, 255:red, 0; green, 0; blue, 0 }  ,fill opacity=1 ][black, very thick]  (206.92,604.35) .. controls (206.92,601.85) and (208.94,599.83) .. (211.44,599.83) .. controls (213.93,599.83) and (215.96,601.85) .. (215.96,604.35) .. controls (215.96,606.84) and (213.93,608.87) .. (211.44,608.87) .. controls (208.94,608.87) and (206.92,606.84) .. (206.92,604.35) -- cycle ; 
							\draw  [fill={rgb, 255:red, 0; green, 0; blue, 0 }  ,fill opacity=1 ][black, very thick]  (207.1,554.29) .. controls (207.1,551.79) and (209.13,549.77) .. (211.62,549.77) .. controls (214.12,549.77) and (216.14,551.79) .. (216.14,554.29) .. controls (216.14,556.79) and (214.12,558.81) .. (211.62,558.81) .. controls (209.13,558.81) and (207.1,556.79) .. (207.1,554.29) -- cycle ;
							\draw  [black, very thick, ->]  (112.38,526.08) -- (150,645) ; 
							\draw  [black, very thick, ->]  (171.77,641.92) -- (133,516) ;
							\draw  [black, very thick, ->]  (119.31,636.69) -- (196,627) ;
							\draw [color={rgb, 255:red, 208; green, 2; blue, 27 }  ,draw opacity=1 ]  [red, very thick, ->] (105.46,626.23) -- (95,547) ;
							\draw [color={rgb, 255:red, 208; green, 2; blue, 27 }  ,draw opacity=1 ] [red, very thick, ->]  (86.69,572.54) -- (192,528) ;
							\draw [color={rgb, 255:red, 208; green, 2; blue, 27 }  ,draw opacity=1 ] [red, very thick, ->]  (163.69,514.03) -- (90,600) ;
							\draw (70,598) node [anchor=north west][inner sep=0.75pt]  [font=\large]  {$c_{2}$};
							\draw (121,498) node [anchor=north west][inner sep=0.75pt]  [font=\large]  {$c_{3}$};
							\draw (142,648) node [anchor=north west][inner sep=0.75pt]  [font=\large]  {$c_{4}$};
							\draw (72,568) node [anchor=north west][inner sep=0.75pt]  [font=\scriptsize]  {$-$};
							\draw (160,503) node [anchor=north west][inner sep=0.75pt]  [font=\scriptsize]  {$-$};
							\draw (99,628) node [anchor=north west][inner sep=0.75pt]  [font=\scriptsize]  {$-$};
							\draw (104,513) node [anchor=north west][inner sep=0.75pt]  [font=\scriptsize]  {$-$};
							\draw (81,525) node [anchor=north west][inner sep=0.75pt]  [font=\large]  {$c_{5}$};
							\draw (193,516) node [anchor=north west][inner sep=0.75pt]  [font=\large]  {$c_{1}$};
							\draw (200.02,620) node [anchor=north west][inner sep=0.75pt]  [font=\large]  {$c_{6}$};
							\draw (168,645) node [anchor=north west][inner sep=0.75pt]  [font=\scriptsize]  {$+$};
							\draw (105.69,633.98) node [anchor=north west][inner sep=0.75pt]  [font=\scriptsize]  {$+$};
							\draw (142,666) node [anchor=north west][inner sep=0.75pt]    {(a)};
						\end{tikzpicture}
					\end{subfigure}
					\qquad 
					\begin{subfigure}{0.34\textwidth}
						\centering
						\tikzset{every picture/.style={line width=0.75pt}}
						\begin{tikzpicture}[x=0.75pt,y=0.75pt,yscale=-1,xscale=1]
							\draw  [draw opacity=0][black, very thick]  (407.44,603.01) .. controls (397.68,627.02) and (374.45,643.9) .. (347.34,643.9) .. controls (311.44,643.9) and (282.34,614.29) .. (282.34,577.75) .. controls (282.34,541.22) and (311.44,511.6) .. (347.34,511.6) .. controls (374.63,511.6) and (397.98,528.71) .. (407.62,552.96) -- (347.34,577.75) -- cycle ; \draw  [black, very thick]  (407.44,603.01) .. controls (397.68,627.02) and (374.45,643.9) .. (347.34,643.9) .. controls (311.44,643.9) and (282.34,614.29) .. (282.34,577.75) .. controls (282.34,541.22) and (311.44,511.6) .. (347.34,511.6) .. controls (374.63,511.6) and (397.98,528.71) .. (407.62,552.96) ;
							\draw  [fill={rgb, 255:red, 0; green, 0; blue, 0 }  ,fill opacity=1 ][black, very thick]  (402.92,603.01) .. controls (402.92,600.52) and (404.94,598.49) .. (407.44,598.49) .. controls (409.93,598.49) and (411.96,600.52) .. (411.96,603.01) .. controls (411.96,605.51) and (409.93,607.53) .. (407.44,607.53) .. controls (404.94,607.53) and (402.92,605.51) .. (402.92,603.01) -- cycle ;
							\draw  [fill={rgb, 255:red, 0; green, 0; blue, 0 }  ,fill opacity=1 ][black, very thick]  (403.1,552.96) .. controls (403.1,550.46) and (405.13,548.44) .. (407.62,548.44) .. controls (410.12,548.44) and (412.14,550.46) .. (412.14,552.96) .. controls (412.14,555.45) and (410.12,557.48) .. (407.62,557.48) .. controls (405.13,557.48) and (403.1,555.45) .. (403.1,552.96) -- cycle ; 
							\draw  [black, very thick,->]  (308.38,524.74) -- (345,643) ;
							\draw  [black, very thick,->]  (367.77,640.59) -- (328,515) ; 
							\draw  [black, very thick,->]  (315.31,635.36) -- (392,626) ;
							\draw [color={rgb, 255:red, 208; green, 2; blue, 27 }  ,draw opacity=1 ]  [red, very thick, ->] (292.2,612.8) -- (284.89,562.58) ;
							\draw [color={rgb, 255:red, 208; green, 2; blue, 27 }  ,draw opacity=1 ]  [red, very thick, ->] (291.46,543.97) -- (358,513) ;
							\draw [color={rgb, 255:red, 208; green, 2; blue, 27 }  ,draw opacity=1 ]  [red, very thick, ->] (385.2,524.2) -- (301,625) ;
							\draw (283,623) node [anchor=north west][inner sep=0.75pt]  [font=\large]  {$c_{2}'$};
							\draw (316,490) node [anchor=north west][inner sep=0.75pt]  [font=\large]  {$c_{3}'$};
							\draw (336,646) node [anchor=north west][inner sep=0.75pt]  [font=\large]  {$c_{4}'$};
							\draw (278,540) node [anchor=north west][inner sep=0.75pt]  [font=\scriptsize]  {$-$};
							\draw (383,514) node [anchor=north west][inner sep=0.75pt]  [font=\scriptsize]  {$-$};
							\draw (283,615) node [anchor=north west][inner sep=0.75pt]  [font=\scriptsize]  {$-$};
							\draw (301,514) node [anchor=north west][inner sep=0.75pt]  [font=\scriptsize]  {$-$};
							\draw (264,548) node [anchor=north west][inner sep=0.75pt]  [font=\large]  {$c_{5}'$};
							\draw (356,492) node [anchor=north west][inner sep=0.75pt]  [font=\large]  {$c_{1}'$};
							\draw (395,616) node [anchor=north west][inner sep=0.75pt]  [font=\large]  {$c_{6}'$};
							\draw (365,643) node [anchor=north west][inner sep=0.75pt]  [font=\scriptsize]  {$+$};
							\draw (301.69,632.65) node [anchor=north west][inner sep=0.75pt]  [font=\scriptsize]  {$+$};
							\draw (336,666) node [anchor=north west][inner sep=0.75pt]    {(b)};
						\end{tikzpicture}
					\end{subfigure}		
					\caption{Diagrams and Gauss diagrams of planar virtual knotoids $K_7$ and $K_8$.}
					\label{fig32}
				\end{center}
			\end{figure}
			Calculation gives all chord indices of the Gauss diagram of $K_7$,
			\begin{align*}
				&
				\Ind(c_1)=-2,\ \Ind(c_2)=2,\ \Ind(c_3)=1,\ \Ind(c_4)=-1,\ \Ind(c_5)=0,\ \Ind(c_6)=-2;
			\end{align*}
			likewise, we compute the chord indices associated with the Gauss diagram of $K_8$,
			\begin{align*}
				&
				\Ind(c_1')=-2,\ \Ind(c_2')=2,\ \Ind(c_3')=1,\ \Ind(c_4')=-1,\ \Ind(c_5\cyan{\sout{}{'}})=0,\ \Ind(c_6\cyan{\sout{}{'}})=-2.
			\end{align*}
			
			However, the diagram of $K_7^{(2)}$ is presented in Figure~\ref{fig34}, while $K_8^{(2)}$ is the trivial knotoid. Although the diagrams of $K_7^{(2)}$ and $K_8^{(2)}$ differ by a $\Delta$-move and and four $\Omega_1$-moves, a direct computation of their normalized bracket polynomials yields
			$$
			\langle K_7^{(2)}\rangle_\circ = -A^{16} + A^{12} + A^{4},\ \langle K_8^{(2)}\rangle_\circ = 1, 
			$$
			which implies that $K_7^{(2)}$ and $K_8^{(2)}$ are $\Delta$-homotopic but not equivalent, so $d_\Delta(K_7^{(2)}, K_8^{(2)})\geq 1$. Thus, $d_\Delta(K_7^{(2)}, K_8^{(2)}) = 1$. By Theorem~\ref{thm-decov}, $d_\Delta(K_7, K_8)\geq d_\Delta(K_7^{(2)}, K_8^{(2)}) = 1$. We can also observe that the diagrams of $K_7$ and $K_8$ differ by a $\Delta$-move, so $d_\Delta(K_7, K_8) \le 1$. Therefore, $d_\Delta(K_7, K_8) = d_\Delta(K_7^{(2)}, K_8^{(2)}) = 1$.
			\begin{figure}[htbp]
				\begin{center}
					\tikzset{every picture/.style={line width=0.7pt}}
					\begin{tikzpicture}[x=0.75pt,y=0.75pt,yscale=-1,xscale=1]
						\draw  [black, very thick]  (392.56,395.46) -- (386.72,401.6) -- (385.73,393.52) ;
						\draw [black, very thick]    (321.13,364.71) .. controls (348.58,354.34) and (375.38,357.39) .. (385.6,374.95) .. controls (395.82,392.52) and (384,424.52) .. (355.45,415.79) ;
						\draw [black, very thick]    (373.63,353.33) .. controls (379.75,335.83) and (354.89,326) .. (339.88,330.71) .. controls (324.86,335.42) and (316.16,347.3) .. (313.25,356.21) .. controls (310.34,365.12) and (308.24,397.9) .. (331.09,407.97) ; 
						\draw [black, very thick]    (303.8,420.33) .. controls (300.92,408.33) and (289,409.67) .. (285.25,413.58) .. controls (281.5,417.5) and (282.5,423.83) .. (289.67,426.25) .. controls (296.83,428.67) and (325.02,424.27) .. (339.64,413.42) .. controls (354.25,402.58) and (359.5,387.58) .. (366.4,371.53) ;
						\draw [black, very thick]   (282.75,380.7) .. controls (288.75,375.5) and (295.3,371.79) .. (303,369.79) ; 
						\draw [black, very thick]    (306.57,430.8) .. controls (307.88,436.29) and (312.29,439.79) .. (316.7,440.66) ;
						\draw  [fill={rgb, 255:red, 0; green, 0; blue, 0 }  ,fill opacity=1 ][black, very thick]  (287.27,380.69) .. controls (287.27,383.19) and (285.25,385.21) .. (282.76,385.22) .. controls (280.26,385.22) and (278.23,383.2) .. (278.23,380.7) .. controls (278.23,378.21) and (280.25,376.18) .. (282.74,376.18) .. controls (285.24,376.17) and (287.27,378.19) .. (287.27,380.69) -- cycle ;
						\draw  [fill={rgb, 255:red, 0; green, 0; blue, 0 }  ,fill opacity=1 ][black, very thick]  (321.22,440.65) .. controls (321.22,443.15) and (319.2,445.17) .. (316.71,445.18) .. controls (314.21,445.18) and (312.18,443.16) .. (312.18,440.66) .. controls (312.18,438.17) and (314.2,436.14) .. (316.69,436.14) .. controls (319.19,436.13) and (321.22,438.15) .. (321.22,440.65) -- cycle ;
					\end{tikzpicture}
					\caption{The diagram of $K_7^{(2)}$.}
					\label{fig34}
				\end{center}
			\end{figure}
			
			From the diagrams of $K_7^{(2)}$ and $K_8^{(2)}$, we see they differ by a crossing change, an $\Omega_1$-move and an $\Omega_2$-move, so $d_G(K_7^{(2)}, K_8^{(2)}) = 1$. By Theorem~\ref{thm-ccov}, $d_G(K_7, K_8)\geq d_G(K_7^{(2)}, K_8^{(2)}) = 1$. The diagrams of $K_7$ and $K_8$ differ by one $\Delta$-move composed of two crossing changes and one $\Omega_3$-move, hence $d_G (K_7, K_8) \le 2$. Therefore, $1 \le d_G(K_7, K_8) \le 2$. 
			
			To summarize, $d_\Delta(K_7, K_8) = 1$ and $1 \le d_G(K_7, K_8) \le 2$.
		}
	\end{example}

	\section{VRCC-Gordian distance: proof of Theorem~\ref{thm-vrcov}} \label{sec7}
	
	In this section, the proof of Theorem~\ref{thm-vrcov} is first presented and an example of computing the VRCC-Gordian distance between VRCC-homotopic planar virtual knotoids are provided.
	
	\begin{proof}[Proof of Theorem~\ref{thm-vrcov}.]
		Since $K$ and $K'$ are VRCC-homotopic, set $d_\text{VRCC}(K,K')=n$. There exists a sequence of planar virtual knotoids $K_0, \dots, K_n$ satisfying $K_0=K$, $K_n=K'$, where $d_\text{VRCC}(K_{i-1}, K_i) = 1$ ($i = 1 , \dots , n$). For planar virtual knotoids $K_{i-1}$ and $K_i$, let $D_{i-1}$ be a diagram of $K_{i-1}$ and $D_i$ a diagram of $K_i$. The diagrams $D_{i-1}$ and $D_i$ differ by a VRCC-move.  Let $M_{i-1}$ be the set of all crossings involved in this VRCC-move in $D_{i-1}$, and $M_i$ the set of corresponding crossings in $D_i$. Since a VRCC-move consists of multiple crossing changes, when we apply the VRCC-move at $D_{i-1}$, the indices of all crossings of $D_{i-1}$ remain unchanged with the exception of $M_{i-1}$. Moreover, the index of each  crossing in $M_{i-1}$ is the negative of that of its counterpart in $M_i$. Thus, all crossings of $D_{i-1}^{(r)}$ and $D_{i}^{(r)}$ coincide with the exception of the crossings corresponding to $M_{i-1}$ and $M_{i}$. We now discuss the relationship between $D_{i-1}^{(r)}$ and $D_i^{(r)}$ in three cases.
		
		\noindent\textbf{Case (a).}
		The index of every crossing in $M_{i-1}$ is divisible by $r$. There are crossings in $D_{i-1}^{(r)}$ and $D_{i}^{(r)}$ corresponding to $M_{i-1}$ and $M_{i}$, respectively. Then, the VRCC-move from $D_{i-1}$ to  $D_i$ corresponds to a VRCC-move from $D_{i-1}^{(r)}$ to $D_{i}^{(r)}$. Thus, $D_{i-1}^{(r)}$ and $D_i^{(r)}$ differ by a VRCC-move.
		
		\noindent\textbf{Case (b).}
		Some indices of crossings in $M_{i-1}$ are divisible by $r$, and the rest are not. Then all crossings with indices indivisible by $r$ in $M_{i-1}$ and $M_i$ are virtualized. The underlying $4$-valent graph, and hence the region at which the move is performed, is unchanged by virtualization. Its boundary contains precisely the retained classical crossings from $M_{i-1}$. Therefore, $D_{i-1}^{(r)}$ and $D_i^{(r)}$ still differ by a VRCC-move, which now involves fewer classical crossings.
		
		\noindent\textbf{Case (c).}
		None of the indices of crossings in $M_{i-1}$ is divisible by $r$. Then all crossings in $M_{i-1}$ of  $D_{i-1}$ and all crossings in $M_i$ of $D_i$ are virtualized. Consequently, $D_{i-1}^{(r)}$ coincides exactly with $D_{i}^{(r)}$. 
		
		Thus, we obtain a sequence of planar virtual knotoids $K_0^{(r)}, \dots, K_n^{(r)}$ satisfying $K_0^{(r)}=K^{(r)}$, $K_n^{(r)}=K'^{(r)}$, where $d_\text{VRCC}(K_{i-1}^{(r)}, K_{i}^{(r)})\le 1$ ($i = 1 , \dots , n$). Therefore,  $K^{(r)}$ is VRCC-homotopic to $K'^{(r)}$. Moreover, $d_\text{VRCC}(K, K') \geq d_\text{VRCC}(K^{(r)}, K'^{(r)})$. 
	\end{proof}
	
	We can confirm directly that any VRCC-move can be decomposed into finitely many crossing changes. This leads to the following result.
	
	\begin{proposition}\label{pro 6.1}
		If two planar virtual knotoids $K$ and $K'$ are VRCC-homotopic, then they are homotopic.
	\end{proposition}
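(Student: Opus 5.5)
The plan is to show that every single VRCC-move is realized by a finite sequence of ordinary crossing changes. We then chain these realizations along a VRCC-homotopy, using Definition~\ref{def2.6} and Definition~\ref{def2.11}.

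Suppose $K$ and $K'$ are VRCC-homotopic. By Definition~\ref{def2.11} there are diagrams $D_0,\dots,D_m$ with $D_0$ representing $K$ and $D_m$ representing $K'$. Each consecutive pair $D_{j-1},D_j$ differs by one of three things: a generalized Reidemeister move from $g\mathcal{R}$, a planar isotopy, or a single VRCC-move. Steps of the first two kinds are already allowed in a homotopy, so they need no treatment. It remains to handle a step $D_{j-1}\to D_j$ that is a VRCC-move at a region $R$ of $D_{j-1}$.

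Let $c_1,\dots,c_k$ be the distinct classical crossings incident to $\partial R$; this set is finite since $D_{j-1}$ has finitely many crossings. By the definition of the VRCC-move, $D_j$ is obtained from $D_{j-1}$ by changing each $c_i$ exactly once, while virtual crossings and all other classical crossings stay the same. Crossing changes at distinct crossings are supported in disjoint small disks, so they commute. Performing them one at a time gives diagrams
$$D_{j-1}=E_0,\ E_1,\ \dots,\ E_k=D_j,$$
where $E_i$ is obtained from $E_{i-1}$ by one crossing change at $c_i$.

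There is one subtlety to settle. A crossing may appear twice on $\partial R$, for instance when $R$ touches both sides of a kink. The definition says "each distinct classical crossing", so such a crossing is changed only once, and the decomposition above remains correct. If $k=0$, the move does nothing and $D_j=D_{j-1}$.

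Concatenating these sequences for all $j$ gives a chain from a diagram of $K$ to a diagram of $K'$. Every step in this chain is a crossing change, a generalized Reidemeister move, or a planar isotopy, so $K$ and $K'$ are homotopic by Definition~\ref{def2.6}. The argument also yields the bound $d_G(K,K')\le\sum_j k_j$, where $k_j$ is the number of crossings changed in the $j$-th VRCC-move, although the proposition does not require this. I do not expect a real obstacle. The only point needing care is the correct reading of "distinct crossings on $\partial R$" when a crossing meets the region twice, which I have addressed above.
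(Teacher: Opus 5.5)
Your proposal is correct and is the same approach as the paper. The paper justifies the proposition with a single sentence observing that any VRCC-move decomposes into finitely many crossing changes. Your chaining of those decompositions along the VRCC-homotopy, together with your handling of crossings met twice by $\partial R$, just writes out the details that sentence leaves implicit.
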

	
	Theorem~\ref{thm-UVW} implies all planar virtual knotoids in $\mathcal{V}$ are non-homotopic to those in $\mathcal{W}$. By Proposition ~\ref{pro 6.1}, no planar virtual knotoid in $\mathcal{V}$ is VRCC-homotopic to any planar virtual knotoid in~$\mathcal{W}$.
	
	Theorem~\ref{thm-vrcov} allows us to compute the VRCC-Gordian distances between VRCC-homotopic planar virtual knotoids. We demonstrate this with an example below.
	
	\begin{example} \label{ex:7} {\rm
			The diagrams and Gauss diagrams of $K_9$ and $K_{10}$ are illustrated in Figure~\ref{fig21}~(a) and ~(b). 
			\begin{figure}[htbp]
				\begin{center}
					\begin{subfigure}{0.34\textwidth}
						\centering
						\tikzset{every picture/.style={line width=0.75pt}} 
						\begin{tikzpicture}[x=0.75pt,y=0.75pt,yscale=-1,xscale=1]
							\draw  [black, very thick]  (160.32,246.24) -- (154.64,252.53) -- (153.44,244.48) ;
							\draw [black, very thick]    (114.2,204.29) .. controls (110.3,197.37) and (123.85,183.82) .. (140.03,186.35) .. controls (156.2,188.87) and (160.06,208.48) .. (159.28,228.28) .. controls (158.5,248.07) and (150.95,261.79) .. (147.33,265.82) ;
							\draw [black, very thick]    (158.21,173.26) .. controls (138.7,146.52) and (122.75,200.11) .. (151.36,222.53) ;
							\draw [black, very thick]    (222.25,207.33) .. controls (222.15,221.13) and (214.1,225.2) .. (205.61,223.56) ;
							\draw [black, very thick]    (223.62,193.28) .. controls (223.87,192.06) and (225.71,172.73) .. (215.43,176.05) .. controls (205.15,179.37) and (200.18,196.36) .. (190.27,198.72) .. controls (180.36,201.08) and (173.17,195.11) .. (167.45,186.62) ; 
							\draw [black, very thick]    (209.87,190.96) .. controls (232.58,202.73) and (236.74,214.65) .. (233.97,226.4) .. controls (231.2,238.14) and (217.41,242.81) .. (207.77,243.23) .. controls (198.14,243.65) and (181.98,240.26) .. (165.64,230.15) ;
							\draw [black, very thick]    (195.66,182.2) .. controls (181.56,174.93) and (169.9,174.41) .. (162.17,180.65) .. controls (154.45,186.89) and (159.82,205.99) .. (144.62,207.01) ;
							\draw [black, very thick]    (134,208.46) .. controls (117.16,210.1) and (95.41,219.85) .. (90.13,241.63) ;
							\draw [black, very thick]    (87.47,254.11) .. controls (82.05,270.59) and (56.05,258.98) .. (46.59,269.02) .. controls (37.13,279.07) and (47.7,297.06) .. (66.52,301.59) ; 
							\draw [black, very thick]    (121.43,218.13) .. controls (125.27,226.78) and (126.66,237.41) .. (122.85,238.56) .. controls (119.03,239.71) and (116.37,237.2) .. (97.02,226.35) .. controls (77.68,215.5) and (66.61,220.58) .. (68.36,226.46) .. controls (70.11,232.35) and (76.13,239.34) .. (90.45,248.16) .. controls (104.77,256.98) and (145.17,275.72) .. (155.51,282.59) .. controls (165.85,289.47) and (172.34,304.33) .. (155.69,306.43) .. controls (139.04,308.52) and (97.21,306.94) .. (91.48,305.85) ;
							\draw [black, very thick]    (138.13,282.55) .. controls (126.94,296.48) and (101.5,328.77) .. (89.98,328.45) .. controls (78.46,328.13) and (80.16,314.75) .. (76.9,295.14) .. controls (73.64,275.52) and (36.22,266.88) .. (28.41,256.91) .. controls (20.59,246.94) and (32.53,243.71) .. (41.31,244.96) .. controls (50.1,246.21) and (65.05,254.51) .. (70.5,257.86) ;
							\draw [black, very thick]    (105.04,288.76) .. controls (102.83,280.19) and (98.6,276.64) .. (87.35,268.92) ; 
							\draw  [fill={rgb, 255:red, 0; green, 0; blue, 0 }  ,fill opacity=1 ][black, very thick]  (201.09,223.56) .. controls (201.09,221.06) and (203.11,219.04) .. (205.61,219.04) .. controls (208.11,219.04) and (210.13,221.06) .. (210.13,223.56) .. controls (210.13,226.06) and (208.11,228.08) .. (205.61,228.08) .. controls (203.11,228.08) and (201.09,226.06) .. (201.09,223.56) -- cycle ;
							\draw  [black, very thick]  (42.07,269.02) .. controls (42.07,266.53) and (44.09,264.5) .. (46.59,264.5) .. controls (49.09,264.5) and (51.11,266.53) .. (51.11,269.02) .. controls (51.11,271.52) and (49.09,273.54) .. (46.59,273.54) .. controls (44.09,273.54) and (42.07,271.52) .. (42.07,269.02) -- cycle ;
							\draw  [fill={rgb, 255:red, 0; green, 0; blue, 0 }  ,fill opacity=1 ][black, very thick]  (100.52,288.76) .. controls (100.52,286.27) and (102.54,284.24) .. (105.04,284.24) .. controls (107.54,284.24) and (109.56,286.27) .. (109.56,288.76) .. controls (109.56,291.26) and (107.54,293.28) .. (105.04,293.28) .. controls (102.54,293.28) and (100.52,291.26) .. (100.52,288.76) -- cycle ;
							\draw  [black, very thick]  (112.82,306.77) .. controls (112.82,304.28) and (114.84,302.25) .. (117.34,302.25) .. controls (119.84,302.25) and (121.86,304.28) .. (121.86,306.77) .. controls (121.86,309.27) and (119.84,311.29) .. (117.34,311.29) .. controls (114.84,311.29) and (112.82,309.27) .. (112.82,306.77) -- cycle ;
							\draw  [black, very thick]  (92.5,226.35) .. controls (92.5,223.85) and (94.53,221.83) .. (97.02,221.83) .. controls (99.52,221.83) and (101.54,223.85) .. (101.54,226.35) .. controls (101.54,228.85) and (99.52,230.87) .. (97.02,230.87) .. controls (94.53,230.87) and (92.5,228.85) .. (92.5,226.35) -- cycle ;
							\draw  [black, very thick]  (150.57,199.02) .. controls (150.57,196.53) and (152.59,194.5) .. (155.09,194.5) .. controls (157.59,194.5) and (159.61,196.53) .. (159.61,199.02) .. controls (159.61,201.52) and (157.59,203.54) .. (155.09,203.54) .. controls (152.59,203.54) and (150.57,201.52) .. (150.57,199.02) -- cycle ;
							\draw  [black, very thick]  (131.53,186.18) .. controls (131.53,183.69) and (133.56,181.66) .. (136.05,181.66) .. controls (138.55,181.66) and (140.57,183.69) .. (140.57,186.18) .. controls (140.57,188.68) and (138.55,190.7) .. (136.05,190.7) .. controls (133.56,190.7) and (131.53,188.68) .. (131.53,186.18) -- cycle ;
							\draw  [color={rgb, 255:red, 208; green, 2; blue, 27 }  ,draw opacity=1 ][fill={rgb, 255:red, 208; green, 2; blue, 27 }  ,fill opacity=1 ] (172.82,178.27) .. controls (176.91,177.54) and (191.64,181.18) .. (196.36,185.27) .. controls (201.09,189.36) and (194.27,194.73) .. (189.73,196.73) .. controls (185.18,198.73) and (176.73,196.27) .. (170.91,188.18) .. controls (165.09,180.09) and (168.73,179) .. (172.82,178.27) -- cycle ;
							\draw (125,330) node [anchor=north west][inner sep=0.75pt]  {$K_9$};
						\end{tikzpicture}				
					\end{subfigure}
					\qquad 
					\begin{subfigure}{0.34\textwidth}
						\centering
						\tikzset{every picture/.style={line width=0.75pt}} 
						\begin{tikzpicture}[x=0.75pt,y=0.75pt,yscale=-1,xscale=1]
							\draw [color={rgb, 255:red, 0; green, 0; blue, 0 }  ,draw opacity=1 ][black, very thick]    (352.24,207.67) .. controls (348.3,200.66) and (361.91,187.54) .. (378.19,190.5) .. controls (394.46,193.46) and (398.38,213.11) .. (397.63,232.82) .. controls (396.88,252.52) and (389.31,265.98) .. (385.68,269.9) ;
							\draw [color={rgb, 255:red, 0; green, 0; blue, 0 }  ,draw opacity=1 ][black, very thick]    (436.43,199.38) .. controls (428.54,207.43) and (420.03,205.67) .. (412.99,200.59) .. controls (405.95,195.51) and (402.93,186.84) .. (395.82,177.07) ;
							\draw [color={rgb, 255:red, 0; green, 0; blue, 0 }  ,draw opacity=1 ][black, very thick]    (448.46,197.03) .. controls (471.32,209.38) and (475.52,221.39) .. (472.75,233.01) .. controls (469.99,244.64) and (456.12,248.91) .. (446.43,249.06) .. controls (436.75,249.21) and (420.48,245.39) .. (404.03,234.86) ;
							\draw [color={rgb, 255:red, 0; green, 0; blue, 0 }  ,draw opacity=1 ][black, very thick]    (460.93,213.69) .. controls (460.85,227.44) and (452.77,231.27) .. (444.22,229.4) ;
							\draw [color={rgb, 255:red, 0; green, 0; blue, 0 }  ,draw opacity=1 ][black, very thick]    (462.28,199.73) .. controls (463.74,187.76) and (459.82,179.69) .. (454.02,182.32) .. controls (448.22,184.95) and (448.37,185.76) .. (446.32,187.54) ;
							\draw [color={rgb, 255:red, 0; green, 0; blue, 0 }  ,draw opacity=1 ][black, very thick]    (396.46,177.97) .. controls (376.79,150.77) and (360.84,203.74) .. (389.65,226.87) ;
							\draw [color={rgb, 255:red, 0; green, 0; blue, 0 }  ,draw opacity=1 ][black, very thick]    (450.94,198.4) .. controls (428.65,185.75) and (414.73,180.39) .. (405.84,182.65) ;
							\draw [color={rgb, 255:red, 0; green, 0; blue, 0 }  ,draw opacity=1 ][black, very thick]    (372.17,212.37) .. controls (355.23,213.54) and (333.37,222.66) .. (328.1,244.21) ;
							\draw [color={rgb, 255:red, 0; green, 0; blue, 0 }  ,draw opacity=1 ][black, very thick]    (325.45,256.57) .. controls (320.02,272.85) and (293.85,260.56) .. (284.35,270.31) .. controls (274.85,280.05) and (285.51,298.28) .. (304.45,303.31) ;
							\draw [color={rgb, 255:red, 0; green, 0; blue, 0 }  ,draw opacity=1 ][black, very thick]    (359.54,221.65) .. controls (363.42,230.38) and (364.84,241.01) .. (361,242.05) .. controls (357.17,243.09) and (354.48,240.52) .. (335.01,229.17) .. controls (315.53,217.82) and (304.4,222.58) .. (306.17,228.49) .. controls (307.94,234.4) and (314.01,241.54) .. (328.43,250.73) .. controls (342.85,259.92) and (383.52,279.71) .. (393.93,286.84) .. controls (404.34,293.98) and (410.9,308.97) .. (394.15,310.6) .. controls (377.41,312.23) and (335.33,309.49) .. (329.57,308.25) ;
							\draw [color={rgb, 255:red, 0; green, 0; blue, 0 }  ,draw opacity=1 ][black, very thick]    (376.45,286.32) .. controls (365.22,299.89) and (339.69,331.36) .. (328.1,330.73) .. controls (316.51,330.09) and (318.2,316.8) .. (314.88,297.16) .. controls (311.57,277.53) and (273.92,267.88) .. (266.04,257.73) .. controls (258.16,247.58) and (270.16,244.69) .. (279,246.18) .. controls (287.84,247.67) and (302.89,256.35) .. (308.38,259.84) ;
							\draw [color={rgb, 255:red, 0; green, 0; blue, 0 }  ,draw opacity=1 ][black, very thick]    (343.18,291.59) .. controls (340.94,282.98) and (336.67,279.33) .. (325.35,271.32) ;
							\draw [color={rgb, 255:red, 0; green, 0; blue, 0 }  ,draw opacity=1 ][black, very thick]    (396.72,191.3) .. controls (395.9,193.57) and (394.83,210.75) .. (382.84,211.22) ;
							\draw  [fill={rgb, 255:red, 0; green, 0; blue, 0 }  ,fill opacity=1 ][black, very thick]  (338.66,291.59) .. controls (338.66,289.1) and (340.68,287.07) .. (343.18,287.07) .. controls (345.67,287.07) and (347.7,289.1) .. (347.7,291.59) .. controls (347.7,294.09) and (345.67,296.11) .. (343.18,296.11) .. controls (340.68,296.11) and (338.66,294.09) .. (338.66,291.59) -- cycle ;
							\draw  [fill={rgb, 255:red, 0; green, 0; blue, 0 }  ,fill opacity=1 ][black, very thick]  (439.7,229.4) .. controls (439.7,226.9) and (441.73,224.88) .. (444.22,224.88) .. controls (446.72,224.88) and (448.74,226.9) .. (448.74,229.4) .. controls (448.74,231.9) and (446.72,233.92) .. (444.22,233.92) .. controls (441.73,233.92) and (439.7,231.9) .. (439.7,229.4) -- cycle ;
							\draw  [black, very thick]  (369.76,190.39) .. controls (369.76,187.89) and (371.78,185.87) .. (374.28,185.87) .. controls (376.77,185.87) and (378.8,187.89) .. (378.8,190.39) .. controls (378.8,192.89) and (376.77,194.91) .. (374.28,194.91) .. controls (371.78,194.91) and (369.76,192.89) .. (369.76,190.39) -- cycle ;
							\draw  [black, very thick]  (388.39,203.1) .. controls (388.39,200.6) and (390.41,198.58) .. (392.91,198.58) .. controls (395.41,198.58) and (397.43,200.6) .. (397.43,203.1) .. controls (397.43,205.59) and (395.41,207.62) .. (392.91,207.62) .. controls (390.41,207.62) and (388.39,205.59) .. (388.39,203.1) -- cycle ;
							\draw  [black, very thick]  (330.49,229.17) .. controls (330.49,226.68) and (332.51,224.65) .. (335.01,224.65) .. controls (337.5,224.65) and (339.53,226.68) .. (339.53,229.17) .. controls (339.53,231.67) and (337.5,233.69) .. (335.01,233.69) .. controls (332.51,233.69) and (330.49,231.67) .. (330.49,229.17) -- cycle ;
							\draw  [black, very thick]  (279.83,270.31) .. controls (279.83,267.81) and (281.86,265.79) .. (284.35,265.79) .. controls (286.85,265.79) and (288.87,267.81) .. (288.87,270.31) .. controls (288.87,272.8) and (286.85,274.83) .. (284.35,274.83) .. controls (281.86,274.83) and (279.83,272.8) .. (279.83,270.31) -- cycle ;
							\draw  [black, very thick]  (351.12,310.19) .. controls (351.12,307.69) and (353.14,305.67) .. (355.64,305.67) .. controls (358.13,305.67) and (360.16,307.69) .. (360.16,310.19) .. controls (360.16,312.68) and (358.13,314.71) .. (355.64,314.71) .. controls (353.14,314.71) and (351.12,312.68) .. (351.12,310.19) -- cycle ;
							\draw  [black, very thick]  (399.67,246.19) -- (394.48,252.9) -- (392.67,244.97) ;
							\draw  [color={rgb, 255:red, 208; green, 2; blue, 27 }  ,draw opacity=1 ][fill={rgb, 255:red, 208; green, 2; blue, 27 }  ,fill opacity=1 ] (411.22,184.27) .. controls (417.08,184.54) and (428.86,188.11) .. (433.58,192.2) .. controls (438.31,196.29) and (432.09,201.58) .. (428.13,202.73) .. controls (424.17,203.87) and (415.13,202.27) .. (409.31,194.18) .. controls (403.49,186.09) and (405.35,184.01) .. (411.22,184.27) -- cycle ;
							\draw (363,330) node [anchor=north west][inner sep=0.75pt]   {$K_{10}$};
						\end{tikzpicture}				
					\end{subfigure}
					\begin{subfigure}{0.34\textwidth}
						\centering
						\tikzset{every picture/.style={line width=0.75pt}} 
						\begin{tikzpicture}[x=0.75pt,y=0.75pt,yscale=-1,xscale=1]
							\draw  [draw opacity=0][black, very thick]  (235.44,526.86) .. controls (225.68,550.86) and (202.45,567.75) .. (175.34,567.75) .. controls (139.44,567.75) and (110.34,538.13) .. (110.34,501.6) .. controls (110.34,465.06) and (139.44,435.45) .. (175.34,435.45) .. controls (202.63,435.45) and (225.98,452.55) .. (235.62,476.8) -- (175.34,501.6) -- cycle ; \draw  [black, very thick]  (235.44,526.86) .. controls (225.68,550.86) and (202.45,567.75) .. (175.34,567.75) .. controls (139.44,567.75) and (110.34,538.13) .. (110.34,501.6) .. controls (110.34,465.06) and (139.44,435.45) .. (175.34,435.45) .. controls (202.63,435.45) and (225.98,452.55) .. (235.62,476.8) ;  
							\draw  [fill={rgb, 255:red, 0; green, 0; blue, 0 }  ,fill opacity=1 ][black, very thick]  (230.92,526.86) .. controls (230.92,524.36) and (232.94,522.34) .. (235.44,522.34) .. controls (237.93,522.34) and (239.96,524.36) .. (239.96,526.86) .. controls (239.96,529.35) and (237.93,531.38) .. (235.44,531.38) .. controls (232.94,531.38) and (230.92,529.35) .. (230.92,526.86) -- cycle ;
							\draw  [fill={rgb, 255:red, 0; green, 0; blue, 0 }  ,fill opacity=1 ][black, very thick]  (231.1,476.8) .. controls (231.1,474.3) and (233.13,472.28) .. (235.62,472.28) .. controls (238.12,472.28) and (240.14,474.3) .. (240.14,476.8) .. controls (240.14,479.3) and (238.12,481.32) .. (235.62,481.32) .. controls (233.13,481.32) and (231.1,479.3) .. (231.1,476.8) -- cycle ;
							\draw  [black, very thick, ->]  (132.59,451.85) -- (230.27,466.29) ;
							\draw  [red, very thick, ->]  (221.34,454.95) -- (118.86,468.81) ;
							\draw  [red, very thick, ->]  (113.34,481.64) -- (210.43,445.88) ;
							\draw  [black, very thick, ->]  (187.16,436.24) -- (111.4,490.44) ;
							\draw  [black, very thick, ->]  (180.64,567.38) -- (169.4,435.74) ;
							\draw  [black, very thick, ->]  (110.43,498.26) -- (170.14,567.38) ;
							\draw  [black, very thick, ->]  (158.64,565.63) -- (110.89,510.13) ;
							\draw  [black, very thick, ->]  (114.13,524.14) -- (230.39,536.71) ;
							\draw  [black, very thick, ->]  (222.49,547.05) -- (119.59,535.43) ;
							\draw  [black, very thick, ->]  (128.86,547.8) -- (210.68,557.27) ;
							\draw (96.97,460.35) node [anchor=north west][inner sep=0.75pt]  [font=\large]  {$c_{2}$};
							\draw (210.22,430.71) node [anchor=north west][inner sep=0.75pt]  [font=\large]  {$c_{3}$};
							\draw (90.33,484.65) node [anchor=north west][inner sep=0.75pt]  [font=\large]  {$c_{4}$};
							\draw (225.63,447.43) node [anchor=north west][inner sep=0.75pt]  [font=\scriptsize]  {$-$};
							\draw (97.63,518.36) node [anchor=north west][inner sep=0.75pt]  [font=\scriptsize]  {$+$};
							\draw (184.01,423.17) node [anchor=north west][inner sep=0.75pt]  [font=\scriptsize]  {$-$};
							\draw (115.26,444.5) node [anchor=north west][inner sep=0.75pt]  [font=\scriptsize]  {$-$};
							\draw (157.88,418.24) node [anchor=north west][inner sep=0.75pt]  [font=\large]  {$c_{5}$};
							\draw (235.12,459.95) node [anchor=north west][inner sep=0.75pt]  [font=\large]  {$c_{1}$};
							\draw (160.38,572.71) node [anchor=north west][inner sep=0.75pt]  [font=\large]  {$c_{6}$};
							\draw (90.44,501.67) node [anchor=north west][inner sep=0.75pt]  [font=\large]  {$c_{7}$};
							\draw (177,573.79) node [anchor=north west][inner sep=0.75pt]  [font=\scriptsize]  {$+$};
							\draw (236.3,531.77) node [anchor=north west][inner sep=0.75pt]  [font=\large]  {$c_{8}$};
							\draw (152,571.38) node [anchor=north west][inner sep=0.75pt]  [font=\scriptsize]  {$+$};
							\draw (215.78,554.62) node [anchor=north west][inner sep=0.75pt]  [font=\large]  {$c_{10}$};
							\draw (97.85,528.95) node [anchor=north west][inner sep=0.75pt]  [font=\large]  {$c_{9}$};
							\draw (96.01,476.5) node [anchor=north west][inner sep=0.75pt]  [font=\scriptsize]  {$-$};
							\draw (94.35,494.5) node [anchor=north west][inner sep=0.75pt]  [font=\scriptsize]  {$-$};
							\draw (112.29,544.09) node [anchor=north west][inner sep=0.75pt]  [font=\scriptsize]  {$+$};
							\draw (228.68,543.17) node [anchor=north west][inner sep=0.75pt]  [font=\scriptsize]  {$-$};
							\draw (172,595) node [anchor=north west][inner sep=0.75pt]    {(a)};
						\end{tikzpicture}					
					\end{subfigure}
					\qquad 
					\begin{subfigure}{0.34\textwidth}
						\centering
						\tikzset{every picture/.style={line width=0.75pt}} 
						\begin{tikzpicture}[x=0.75pt,y=0.75pt,yscale=-1,xscale=1]
							\draw  [draw opacity=0][black, very thick]  (235.44,526.86) .. controls (225.68,550.86) and (202.45,567.75) .. (175.34,567.75) .. controls (139.44,567.75) and (110.34,538.13) .. (110.34,501.6) .. controls (110.34,465.06) and (139.44,435.45) .. (175.34,435.45) .. controls (202.63,435.45) and (225.98,452.55) .. (235.62,476.8) -- (175.34,501.6) -- cycle ; \draw  [line width=1.5]  (235.44,526.86) .. controls (225.68,550.86) and (202.45,567.75) .. (175.34,567.75) .. controls (139.44,567.75) and (110.34,538.13) .. (110.34,501.6) .. controls (110.34,465.06) and (139.44,435.45) .. (175.34,435.45) .. controls (202.63,435.45) and (225.98,452.55) .. (235.62,476.8) ;  
							\draw  [fill={rgb, 255:red, 0; green, 0; blue, 0 }  ,fill opacity=1 ][black, very thick]  (230.92,526.86) .. controls (230.92,524.36) and (232.94,522.34) .. (235.44,522.34) .. controls (237.93,522.34) and (239.96,524.36) .. (239.96,526.86) .. controls (239.96,529.35) and (237.93,531.38) .. (235.44,531.38) .. controls (232.94,531.38) and (230.92,529.35) .. (230.92,526.86) -- cycle ;
							\draw  [fill={rgb, 255:red, 0; green, 0; blue, 0 }  ,fill opacity=1 ][black, very thick]  (231.1,476.8) .. controls (231.1,474.3) and (233.13,472.28) .. (235.62,472.28) .. controls (238.12,472.28) and (240.14,474.3) .. (240.14,476.8) .. controls (240.14,479.3) and (238.12,481.32) .. (235.62,481.32) .. controls (233.13,481.32) and (231.1,479.3) .. (231.1,476.8) -- cycle ;
							\draw  [black, very thick, ->]  (132.59,451.85) -- (230.27,466.29) ;
							\draw  [red, very thick, <-]  (221.34,454.95) -- (118.86,468.81) ;
							\draw  [red, very thick, <-]  (113.34,481.64) -- (210.43,445.88) ;
							\draw  [black, very thick, ->]  (187.16,436.24) -- (111.4,490.44) ; 
							\draw  [black, very thick, ->]  (180.64,567.38) -- (169.4,435.74) ; 
							\draw  [black, very thick, ->]  (110.43,498.26) -- (170.14,567.38) ;
							\draw  [black, very thick, ->]  (158.64,565.63) -- (110.89,510.13) ;
							\draw  [black, very thick, ->]  (114.13,524.14) -- (230.39,536.71) ;
							\draw  [black, very thick, ->]  (222.49,547.05) -- (119.59,535.43) ;
							\draw  [black, very thick, ->]  (128.86,547.8) -- (210.68,557.27) ;
							\draw (105,462) node [anchor=north west][inner sep=0.75pt]  [font=\scriptsize]  {$+$};
							\draw (212,436) node [anchor=north west][inner sep=0.75pt]  [font=\scriptsize]  {$+$};
							\draw (88,478) node [anchor=north west][inner sep=0.75pt]  [font=\large]  {$c_{4}'$};
							\draw (225,440) node [anchor=north west][inner sep=0.75pt]  [font=\large]  {$c_{2}'$};
							\draw (97.63,518.36) node [anchor=north west][inner sep=0.75pt]  [font=\scriptsize]  {$+$};
							\draw (184.01,423.17) node [anchor=north west][inner sep=0.75pt]  [font=\scriptsize]  {$-$};
							\draw (115.26,444.5) node [anchor=north west][inner sep=0.75pt]  [font=\scriptsize]  {$-$};
							\draw (160,412) node [anchor=north west][inner sep=0.75pt]  [font=\large]  {$c_{5}'$};
							\draw (235.12,456) node [anchor=north west][inner sep=0.75pt]  [font=\large]  {$c_{1}'$};
							\draw (164,570) node [anchor=north west][inner sep=0.75pt]  [font=\large]  {$c_{6}'$};
							\draw (92,500) node [anchor=north west][inner sep=0.75pt]  [font=\large]  {$c_{7}'$};
							\draw (177,573.79) node [anchor=north west][inner sep=0.75pt]  [font=\scriptsize]  {$+$};
							\draw (232,528) node [anchor=north west][inner sep=0.75pt]  [font=\large]  {$c_{8}'$};
							\draw (152,571.38) node [anchor=north west][inner sep=0.75pt]  [font=\scriptsize]  {$+$};
							\draw (212,550) node [anchor=north west][inner sep=0.75pt]  [font=\large]  {$c_{10}'$};
							\draw (99,526) node [anchor=north west][inner sep=0.75pt]  [font=\large]  {$c_{9}'$};
							\draw (90.8,465) node [anchor=north west][inner sep=0.75pt]  [font=\large]  {$c_{3}'$};
							\draw (94.35,496) node [anchor=north west][inner sep=0.75pt]  [font=\scriptsize]  {$-$};
							\draw (112.29,544.09) node [anchor=north west][inner sep=0.75pt]  [font=\scriptsize]  {$+$};
							\draw (228.68,543.17) node [anchor=north west][inner sep=0.75pt]  [font=\scriptsize]  {$-$};
							\draw (171,595) node [anchor=north west][inner sep=0.75pt]    {(b)};
						\end{tikzpicture}							
					\end{subfigure}		
					\caption{Diagrams and Gauss diagrams of planar virtual knotoids $K_9$ and $K_{10}$.}
					\label{fig21}
				\end{center}
			\end{figure}
			We now determine the index of each chord from the Gauss diagrams of $K_9$ and $K_{10}$. The corresponding values for $K_9$ are
			\begin{align*}
				&
				\Ind(c_1)=-2,\ \Ind(c_2)=2,\ \Ind(c_3)=-2,\ \Ind(c_4)=2,\ \Ind(c_5)=3, \\
				&
				\Ind(c_6)=-3,\ \Ind(c_7)=3,\ \Ind(c_8)=-3,\ \Ind(c_9)=3,\ \Ind(c_{10})=-3;
			\end{align*}
			and those for $K_{10}$ read
			\begin{align*}
				&
				\Ind(c_1')=-2,\ \Ind(c_2')=-2,\ \Ind(c_3')=2,\ \Ind(c_4')=2,\ \Ind(c_5')=3, \\
				&
				\Ind(c_6')=-3,\ \Ind(c_7')=3,\ \Ind(c_8')=-3,\ \Ind(c_9')=3,\ \Ind(c_{10}')=-3.
			\end{align*}
			
			Therefore, Figure~\ref{fig16} displays the diagram of $K_9^{(2)}$, and $K_{10}^{(2)}$ is the trivial knotoid, so we can see that the diagrams of $K_9^{(2)}$ and $K_{10}^{(2)}$ differ by a VRCC-move and two $\Omega_2$-moves. According to Example~\ref{ex:3}, $d_G(K_9^{(2)}, K_{10}^{(2)}) = 2$. We conclude that $K_9^{(2)}$ and $K_{10}^{(2)}$ are not equivalent, which implies $d_\text{VRCC}(K_9^{(2)}, K_{10}^{(2)}) = 1$. By Theorem~\ref{thm-vrcov}, we obtain $d_\text{VRCC}(K_9, K_{10})\geq d_\text{VRCC}(K_9^{(2)}, K_{10}^{(2)}) = 1$. We can also find that the diagrams of $K_9$ and $K_{10}$ differ by a VRCC-move, so $d_\text{VRCC}(K_9, K_{10}) \le 1$. Hence, $d_\text{VRCC}(K_9, K_{10}) = d_\text{VRCC}(K_9^{(2)}, K_{10}^{(2)}) = 1$.
			
			By Theorem~\ref{thm-ccov}, $d_G(K_9, K_{10})\geq d_G(K_9^{(2)}, K_{10}^{(2)}) = 2$. We further note that the diagrams of $K_9$ and $K_{10}$ differ by two crossing changes, yielding $d_G(K_9, K_{10}) \le 2$. Therefore, $d_G(K_9, K_{10}) = 2$.
			
			In summary, $d_\text{VRCC}(K_9, K_{10}) = 1$ and $d_G(K_9, K_{10}) = 2$.
		} 
	\end{example}
	
	It is also easy to see that the diagrams of the two planar virtual knotoids $K_1$ and $K_2$ in Example~\ref{ex:3} also differ by a VRCC-move in a bigonal region. Following a similar argument as above, we conclude that their VRCC-Gordian distance is $1$.

\end{document}